\definecolor{verde}{rgb}{0,0.5,0}
\definecolor{laranja}{rgb}{0.95,0.45,0}
\definecolor{vermelho}{rgb}{0.666,0,0}
\setlist[enumerate]{label={\bf(\alph*)},ref={(\alph*)},align=left,leftmargin=*,itemindent=0cm,labelindent=\parindent,labelsep=0.5ex}   
\newtheorem{theorem}{Theorem}[section]
\newtheorem{proposition}[theorem]{Proposition}
\newtheorem{corollary}[theorem]{Corollary}
\newtheorem{lemma}[theorem]{Lemma}
\theoremstyle{definition}
\newtheorem{definition}[theorem]{Definition}
\newtheorem{example}[theorem]{Example}
\theoremstyle{remark}
\newtheorem{remark}[theorem]{Remark}
\numberwithin{equation}{section}
\newcommand{\hol}{\text{hol}}
\newcommand{\I}{\mathcal{I}}
\newcommand{\J}{\mathcal{J}}
\newcommand{\Ker}{\ensuremath{\mathrm{\hspace{0.25ex}Ker\hspace{0.25ex}}}}
\newcommand{\proj}{\text{proj}}
\newcommand{\R}{\mathbb{R}}
\newcommand{\rank}{\ensuremath{\mathrm{\hspace{0.25ex}rank\hspace{0.25ex}}}}
\newcommand{\scal}[2]{\langle #1, #2 \rangle}
\newcommand{\Scal}[2]{\left\langle #1, #2 \right\rangle}
\newcommand{\sym}{\ensuremath{\mathrm{\hspace{0.25ex}sym\hspace{0.25ex}}}}
\newcommand{\tr}{\ensuremath{\mathrm{\hspace{0.25ex}tr\hspace{0.25ex}}}}
\renewcommand{\dim}{\ensuremath{\mathrm{\hspace{0.25ex}dim\hspace{0.25ex}}}}
\renewcommand{\L}{\mathcal{L}}
\def\RR{\mathbb{R}}
\newcommand{\F}{\ensuremath{\mathcal{F}}}
\title[Finsler submersions and attainable sets]{Traveling along horizontal broken geodesics of a homogenous Finsler submersion}
\author[Alexandrino]{Marcos M. Alexandrino}
\author[Escobosa]{Fernando M. Escobosa}
\author[Inagaki]{Marcelo K. Inagaki}
\address[Alexandrino,Inagaki, Escobosa]{Universidade de S\~ao Paulo, Instituto de Matem\'atica e Estat\'istica, Rua do Mat\~{a}o 1010,05508 090 S\~ao Paulo, Brazil}
\email{marcosmalex@yahoo.de, m.alexandrino@usp.br, kodi.inagaki@gmail.com, fme.mat@gmail.com}
\thanks{Marcos M. Alexandrino was supported by grant $\#$2016/23746-6, S\~{a}o Paulo Research Foundation (FAPESP).
The second author 
was  partially supported by the Coordena\c{c}\~ao de Aperfei\c{c}oamento de Pessoal de N\'ivel Superior -- Brasil (CAPES) -- Finance Code 001.
}
\date{\today}
\subjclass[2000]{}
\keywords{}
\begin{document}

\begin{abstract}

In this paper, we discuss how to travel along horizontal broken geodesics of a homogenous Finsler submersion, 
i.e., we study, what in Riemannian geometry was called by Wilking, the dual leaves. 
More precisely, we investigate the attainable sets $\mathcal{A}_{q}(\mathcal{C})$ of the set of analytic vector fields $\mathcal{C}$ 
determined by the family of  horizontal unit geodesic vector fields $\mathcal{C}$ 
to the fibers $\F=\{\rho^{-1}(c)\}$ of a homogenous analytic Finsler submersion $\rho: M\to B$.
 Since reverse of geodesics don't need to be 
geodesics in Finsler geometry,
 one  can have examples on non compact Finsler manifolds $M$ where the attainable sets 
(the dual leaves) are no longer  orbits or even   submanifolds. 
Nevertheless we prove that, when $M$ is compact and the orbits of $\mathcal{C}$ are embedded, 
then the attainable sets coincide with the orbits. Furthermore,
if the flag curvature  is  positive 
then $M$ coincides with the attainable set of each point.
In other words, fixed two points of $M$, one can travel from one point  to the other 
along horizontal broken geodesics. 

In addition, we show that  each orbit $\mathcal{O}(q)$ of $\mathcal{C}$ associated 
to a  singular Finsler foliation  coincides with
$M$, when the flag curvature is positive, i.e, we prove Wilking's result in Finsler context. In particular
we review Wilking's transversal Jacobi fields in Finsler case. 

\end{abstract}

\maketitle

\section{Introduction}

Given a Riemannian submersion $\rho:M\to B$ and the Riemannian foliation  $\F=\{ \rho^{-1}(c) \}_{c\in B}$,  we 
can associate to it the so called \emph{dual foliation} $\F^{\#}=\{L^{\#}_{q} \}$, where  
 each leaf of $L_{q}^{\#}\in\F^{\#}$ is defined  as the set of points $x\in M$ that are the end point of a 
 piece-wise smooth horizontal geodesic starting at $q$. 

In \cite{Wilking-duality}  Wilking proved that  
Sharafutdinov retraction is smooth using the dual foliation of the metric retraction onto the soul. 
He also proved that if  the curvature is positive then
$M$ coincides with one single leaf of $\F^{\#}.$ In fact this result was proved in the more general context of singular
Riemannian foliations.

Dual foliations can also be seen from the point of view of   geometric  control theory. 
We can consider the set of smooth vector fields 
$\mathcal{C}=\{ \vec{f}_u \}_{u\in U}$ determined by the family
of  horizontal unit geodesic vector fields $\vec{f}_u$, i.e., those whose 
 integral curves are horizontal  unit speed  geodesic segments. With this approach, $L_{q}^{\#}$ is the attainable set $\mathcal{A}_{q}(\mathcal{C}).$
Since in Riemannian geometry this   controls system  is symmetric (if $\vec{f}_u\in\mathcal{C}$ then $-\vec{f}_u\in\mathcal{C}$),
 each orbit $\mathcal{O}(q)$ coincides with $\mathcal{A}_{q}(\mathcal{C})$, and in particular the leaf $L_{q}^{\#}$ is 
an immersed submanifold.

When we consider this discussion in the broader context of Finsler  geometry, we see a significant conceptual shift.
As illustrated in the simple Example \ref{example-submersion-non-compact},  $L_{q}^{\#}=\mathcal{A}_{q}(\mathcal{C})$ does not  need to be a submanifold, since 
it does not need to coincide with the orbit $\mathcal{O}(q)$. In fact the set of smooth vector fields 
$\mathcal{C}=\{ \vec{f}_u \}$ may no longer be symmetric.

In this paper, we investigate the  attainable  set $\mathcal{A}_{q}(\mathcal{C})$ of 
 the set of smooth vector fields $\mathcal{C}$ determined by the family
of  horizontal unit geodesic vector fields  
to the fibers $\F=\{\rho^{-1}(c)\}$ of a homogenous analytic  Finsler submersion $\rho: M\to B$.

\begin{theorem}
\label{theorem-dualizacao}
Let $\mu: G \times M \to M$ be an analytic  Finsler action on an analytic compact Finsler  manifold $M$. 
Assume that $G$ is compact and that the orbits are principal. 
Let $\rho: M \to M/G = B$ be the Finsler submersion describing the homogenous  foliation $\F=\{\rho^{-1}(c)\}$ and 
$\mathcal{C}=\{ \vec{f}_u \}_{u\in U}$ be the set of   horizontal unit geodesic vector fields associated to the submersion $\rho$. 
Then
\begin{enumerate}
\item[(a)] If the  orbit $\mathcal{O}(q)$ is embedded then it coincides with the attainable set $\mathcal{A}_{q}(\mathcal{C})$.
\item[(b)] If $(M,F)$ has  non negative flag curvature and  
the flag curvature at one point $q_0$ is  $K(q_0)>0$ then $\mathcal{A}_{q}(\mathcal{C})=\mathcal{O}(q)=M$ for each $q\in M$.
\end{enumerate}
\end{theorem}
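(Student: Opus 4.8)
I would split the proof exactly along the lines of the statement: first establish (a), that the orbit $\mathcal{O}(q)$ of $\mathcal{C}$ equals the attainable set $\mathcal{A}_{q}(\mathcal{C})$ whenever $\mathcal{O}(q)$ is embedded, and then use (a) together with a positive-curvature argument to get (b). For (a), the inclusion $\mathcal{A}_{q}(\mathcal{C})\subseteq\mathcal{O}(q)$ is immediate from the definitions. The reverse inclusion is where the Finsler subtlety lives: in the symmetric (Riemannian) case $\mathcal{O}(q)=\mathcal{A}_{q}(\mathcal{C})$ is automatic, but here $\mathcal{C}$ need not be symmetric, so I would invoke the orbit theorem of geometric control theory (Sussmann/Stefan) in its analytic version: since $M$ is compact and the vector fields in $\mathcal{C}$ are analytic, each orbit $\mathcal{O}(q)$ is an immersed integral manifold of the Lie algebra generated by $\mathcal{C}$, and on a \emph{compact} orbit a system of complete analytic vector fields has an attainable set with nonempty interior in the orbit; the key point is that when the orbit is embedded (hence compact, being closed in the compact $M$), the reachable set $\mathcal{A}_{q}(\mathcal{C})$ is open \emph{and} closed in $\mathcal{O}(q)$, so by connectedness it is all of $\mathcal{O}(q)$. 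Closedness will follow from compactness of the orbit plus completeness of the flows; openness in the orbit is the standard "attainable set has interior" lemma applied on the orbit manifold. The one technical hook I would flag is the interplay between "embedded" and "compact": I expect the paper wants me to observe that an embedded orbit of an isometric action on compact $M$ is a closed, hence compact, submanifold, which is what makes the compactness argument go through.

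**Deducing (b).** Granting (a), it suffices to show $\mathcal{O}(q)=M$ under the curvature hypothesis, since then $\mathcal{A}_{q}(\mathcal{C})=\mathcal{O}(q)=M$. The natural route is Wilking-style: one shows the orbit distribution — equivalently the Lie algebra generated by the horizontal unit geodesic vector fields $\vec{f}_u$ — is all of $TM$. Concretely, $\mathcal{O}(q)$ is a union of leaves of $\F$ and, being invariant under horizontal geodesic flow, its tangent space at any point contains the horizontal space $\mathcal{H}$ and the vertical space $\mathcal{V}$ (the orbit directions); so if it is a proper submanifold it must be a union of fibers on which the "horizontal holonomy" is degenerate, and this is exactly what positive flag curvature forbids. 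I would make this precise with transversal Jacobi fields: along a horizontal geodesic, the second variation / Jacobi equation with a positive curvature term forces the space of transversal Jacobi fields bounded along the geodesic to drop in rank, and iterating this (as Wilking does) closes up the distribution to $TM$. Since the paper advertises a review of "Wilking's transversal Jacobi fields in Finsler case," I would cite that construction directly rather than rederiving it, and the curvature assumption (nonnegative everywhere, positive at $q_0$) is used precisely to run the A. Weinstein / Wilking splitting-and-contradiction argument: if $\mathcal{O}(q)\neq M$, the orthogonal complement of its tangent distribution would give a nontrivial parallel-like subbundle incompatible with $K(q_0)>0$.

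**Main obstacle.** The hard part will be the Finsler adaptation of the transversal Jacobi field machinery in step (b): in Finsler geometry the flag curvature depends on a flag (a tangent vector together with a transverse direction), parallel transport is only along a fixed geodesic with its reference vector, and the reversibility that Riemannian arguments lean on is absent. So I would expect the bulk of the work — and the part where a cited earlier result or a standalone lemma is needed — to be showing that the relevant transversal Jacobi fields along horizontal geodesics still satisfy a Riccati/Jacobi comparison with the flag curvature as the curvature term, and that the rank-drop argument survives without reversibility. Step (a), by contrast, I expect to be comparatively soft: it is essentially the analytic orbit theorem plus a connectedness argument on a compact embedded orbit. A secondary subtlety worth checking is that the vector fields $\vec{f}_u$ are genuinely complete on $M$ (true since $M$ is compact) and analytic (true since $\rho$ and $F$ are analytic and the geodesic spray is analytic), so that the analytic orbit theorem applies cleanly.
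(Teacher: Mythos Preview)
Your plan for part (a) has a genuine gap. You claim that on a compact embedded orbit the attainable set $\mathcal{A}_{q}(\mathcal{C})$ is both open and closed, and then conclude by connectedness. Neither half of this is justified. Krener's theorem gives that $\mathrm{int}\,\mathcal{A}_{q}(\mathcal{C})$ is nonempty and dense in $\mathcal{A}_{q}(\mathcal{C})$, but it does \emph{not} say that $\mathcal{A}_{q}(\mathcal{C})$ itself is open: for a non-symmetric system the base point $q$ (and more generally any reachable point $p$) may well sit on the boundary of $\mathcal{A}_{p}(\mathcal{C})$, exactly as in the paper's Example~\ref{example-submersion-non-compact}. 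Your closedness claim (``compactness of the orbit plus completeness of the flows'') is equally unsupported: the attainable set is an increasing union over arbitrarily long concatenations of flows and there is no reason for such a union to be closed without further input. So the soft open/closed argument does not go through, and this is precisely the place where the Finsler non-reversibility bites.

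The paper's proof of (a) supplies the missing idea: \emph{Poincar\'e recurrence as a substitute for reversibility}. One lifts the problem to the compact manifold $N\subset T^{1}M$ of unit horizontal vectors, where the single geodesic spray $\vec{\mathtt{g}}$ together with a symmetric family tangent to the fibers of $\pi_{M}:N\to M$ generates a family $\widetilde{\mathcal{C}}$ whose attainable sets and orbits project to those of $\mathcal{C}$. The homogeneity of the submersion is then used to build a volume form on $N$ preserved by $e^{t\vec{\mathtt{g}}}$ (this is exactly where the hypothesis that the foliation comes from a $G$-action enters; cf.\ Remark~\ref{remark-preserving-volume-submanifold}), so that $\vec{\mathtt{g}}$ is Poisson stable on $N$ and, after checking embeddedness lifts, Poisson stable on each orbit $\widetilde{\mathcal{O}}(\tilde q)$. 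Poisson stability makes $-\vec{\mathtt{g}}$ compatible with $\widetilde{\mathcal{C}}$, hence $\widetilde{\mathcal{A}}_{\tilde q}(\widetilde{\mathcal{C}})$ is dense in $\widetilde{\mathcal{O}}(\tilde q)$, and then one invokes the (non-trivial) fact that a dense attainable set equals the orbit when $\mathrm{Lie}(\widetilde{\mathcal{C}})$ is locally finitely generated. None of these steps is visible in your outline.

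Your sketch for (b) is closer in spirit to the paper: it does reduce (b) to showing $\mathcal{O}(q)=M$ and then proves this via the Finsler version of Wilking's transverse Jacobi machinery. Two small corrections: $\mathcal{O}(q)$ is not a priori a union of leaves of $\F$, and its tangent space does not automatically contain the vertical space; the actual argument shows instead that along any horizontal geodesic through a point of positive curvature, every $L$-Jacobi field with a zero is tangent to $\mathcal{O}(q)$, and then the decomposition of the Lagrangian of $L$-Jacobi fields into ``those with a zero'' plus ``parallel ones'' (which cannot exist if $K>0$) forces $T_q\mathcal{O}(q)=T_qM$. But you correctly identify this Jacobi decomposition as the technical core, and the paper indeed develops it separately.
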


With this nice  and simple result 
we hope to stress to an  
audience of mathematicians with  good knowledge on Riemannian geometry,  
how natural is the relation between geometric control theory
and Finsler geometry. In particular, we try to write this note  
in a self contained presentation.
Theorem \ref{theorem-dualizacao} also motivates natural questions that  could be explored
in future research. The first  one is 
how to generalize it, dropping for example the 
condition of homogeneity of the submersion (see Remark \ref{remark-preserving-volume-submanifold}) or even considering the  
most general situation, i.e, dual foliations 
of  singular Finsler foliation; see \cite{Alexandrino-Alves-Javaloyes-SFF}. 
A more involving question is whether  dual foliations could be used  to prove smoothness of class of 
Finsler submetries, as they were used in Wilking's work  in the Riemannian case; see \cite{Wilking-duality}.  

Item (b) of Theorem \ref{theorem-dualizacao}  follows direct from item (a) and from the Proposition \ref{proposition-Orbit-positive}
below, that assures  (as in the Riemannian case) that the orbits of
the set of    horizontal unit geodesic vector fields associated to a singular Finsler foliation
(e.g, partition of $M$ into orbits of a Finsler action) coincide  with $M$ when flag curvature is positive.  
 Note  nevertheless that Proposition \ref{proposition-Orbit-positive}  does not deal with attainable sets,  
that seems to us the main subject of study  in the Finsler case.

\begin{proposition}
\label{proposition-Orbit-positive}
Let $(M,F)$ be a complete Finsler manifold with non negative flag curvature and $\F=\{L\}$ a singular Finsler foliation.
Let $\mathcal{C}=\{ \vec{f}_u \}$ be the set of   horizontal unit geodesic vector fields of $\F$. 
Assume that  there exists a regular leaf $L_q$ so that 
each point of this leaf has positive flag curvature $K > 0$. 
Then the orbit $\mathcal{O}(q)$ of $\mathcal{C}$ coincides with $M$.

\end{proposition}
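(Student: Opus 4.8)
The plan is to transplant Wilking's duality argument from \cite{Wilking-duality} to the Finsler setting, with flag curvature replacing sectional curvature and the Chern connection replacing the Levi--Civita connection. First I would record the control--theoretic picture: by the orbit theorem, $\mathcal{O}(q)$ is a connected immersed submanifold of $M$ whose tangent space $T_p\mathcal{O}(q)$ is invariant under the differentials of the flows $\Phi^u_t$ of the fields $\vec f_u$ and contains every $\vec f_u(p)$. At a regular point $p$ the vectors $\{\vec f_u(p)\}_u$ exhaust the horizontal unit vectors of $\F$ at $p$, hence span $\mathcal H_p$; thus $\mathcal H\subseteq T\mathcal{O}(q)$ along the regular stratum, so the normal bundle satisfies $\nu\mathcal{O}(q)\subseteq\mathcal V=T\F$. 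Since $M$ is connected and (as will be shown) $\mathcal{O}(q)$ is closed, the proof reduces to showing that $\mathcal{O}(q)$ is full--dimensional.

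The core is the Finsler incarnation of Wilking's transversal Jacobi fields. Along a horizontal geodesic $\gamma$, the differential $d\Phi^u_t$ carries $T_{\gamma(0)}\mathcal{O}(q)$ onto $T_{\gamma(t)}\mathcal{O}(q)$, and it is computed through a distinguished family of holonomy (transversal) Jacobi fields of the Chern connection along $\gamma$: those fields whose initial covariant derivative is pinned to their initial value through the Finsler O'Neill--type tensors $A$ and $S$ of the submersion, in the sense of \cite{Alexandrino-Alves-Javaloyes-SFF}. Decomposing Jacobi fields into their $\F$--tangent and $\F$--normal parts produces a reduced, transversal Jacobi equation along $\gamma$ whose curvature operator dominates the flag curvature with flagpole $\dot\gamma$, via the Finsler analogue of the O'Neill/Gray identities. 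Feeding in flag curvature $\geq 0$, the comparison yields the rigidity we need: the dual objects $\mathcal{O}(q)$ are closed, pairwise equidistant, and assemble into a singular Finsler foliation $\F^{\#}$; in particular, any geodesic of $(M,F)$ issuing orthogonally from one leaf of $\F^{\#}$ stays orthogonal to every leaf of $\F^{\#}$ it meets.

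To conclude, suppose for contradiction that $\dim\mathcal{O}(q)<\dim M$. Then $\nu_q\mathcal{O}(q)\neq 0$; pick a unit $\xi\in\nu_q\mathcal{O}(q)\subseteq\mathcal V_q$ and set $\sigma(s)=\exp_q(s\xi)$, a complete geodesic. By the previous step $\sigma$ stays orthogonal to every leaf of $\F^{\#}$, hence $\dot\sigma(s)\perp\mathcal H$, i.e. $\dot\sigma(s)\in\mathcal V$ for all $s$; therefore $\sigma$ lies inside the regular leaf $L_q$, where the flag curvature is positive. Running the transversal Jacobi comparison along $\sigma\subset L_q$ with the positivity of $K$ forces the equidistant family $\F^{\#}$ to focus along $\sigma$ in finite parameter time, i.e. a focal instant of $\mathcal{O}(q)$ along the $\F^{\#}$--horizontal geodesic $\sigma$; together with completeness this contradicts the comparison estimate (equivalently, it violates the index/Bonnet--Myers bound from $K>0$ along $\sigma$). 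Hence $\dim\mathcal{O}(q)=\dim M$, so $\mathcal{O}(q)$ is open; being also closed and nonempty in the connected manifold $M$, it equals $M$. Item (b) of Theorem \ref{theorem-dualizacao} then follows from item (a) as indicated above.

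The step I expect to be the main obstacle is the construction and analysis of these transversal Jacobi fields in the Finsler world, where three features demand care. First, non--reversibility: reversed geodesics are not geodesics, so $\mathcal{C}$ is not a symmetric control system (cf. Example \ref{example-submersion-non-compact}), and one must check that passing from the attainable set to the orbit --- which does admit reversed flows --- does not enlarge the tangent distribution. Second, the dependence of the Chern connection and of the flag curvature on a reference vector, so that $A$, $S$, the transversal Jacobi equation and every comparison inequality live along $\gamma$ with flagpole $\dot\gamma$, and the signs in the O'Neill--type identity must be arranged so that the flag--curvature bound reaches the focal estimate. Third, the asymmetry of the Finsler distance, which forces the one--sided versions of ``equidistant leaves'', of the normal exponential map, and of the focal comparison. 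Once the reduced transversal Jacobi equation is correctly in place, the comparison and the final contradiction proceed essentially as in \cite{Wilking-duality}.
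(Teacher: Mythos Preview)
Your outline diverges from the paper's argument at the decisive step, and the divergence introduces a genuine gap. You assert that under $K\ge 0$ the orbits assemble into a singular Finsler foliation $\F^{\#}$ with closed equidistant leaves and the property that a geodesic orthogonal to one leaf stays orthogonal to all leaves. This is exactly the hard content of Wilking's duality theorem; in the Riemannian case it is a theorem, not an input, and in the Finsler case it is not available in the literature and you do not prove it. Your contradiction then rests entirely on this unproved claim: you need $\sigma$ to remain $\F^{\#}$--horizontal to conclude $\dot\sigma(s)\in\mathcal V$ for all $s$, and even that implication (``$\dot\sigma\perp\mathcal H\Rightarrow\dot\sigma\in\mathcal V$'') is delicate in Finsler geometry, where the horizontal set is a cone rather than a linear subspace and orthogonality depends on the flagpole. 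Finally, the ``focal/Bonnet--Myers'' step along $\sigma$ is vague: the transversal Jacobi machinery you build is for \emph{$\F$--horizontal} geodesics, while $\sigma$ is $\F$--vertical, so you would need a second transversal theory for $\F^{\#}$ --- again presupposing that $\F^{\#}$ is a SFF.

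The paper avoids all of this by running the argument along an $\F$--horizontal geodesic $\gamma$ with $\gamma(0)=q\in L_q$ and working directly with the Lagrangian $\J_\gamma^{L_q}$ of $L_q$--Jacobi fields. Wilking's decomposition (Proposition~\ref{lemma-Jacobi-Decomposition}) gives $\J_\gamma^{L_q}=\mathrm{span}\{J:J(t_0)=0\ \text{for some }t_0\}\oplus\{J:J\ \text{parallel}\}$. The key lemma is that any $L_q$--Jacobi field with a zero is tangent to $\mathcal O(q)$: if $J(t_0)=0$ one checks $J'(t_0)$ lies in the tangent space to the normal cone $\nu_{\gamma(t_0)}(L_{\gamma(t_0)})$, so $J$ is the variational field of a family of horizontal geodesics through $\gamma(t_0)$, all of which lie in $\mathcal O(\gamma(0))$. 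Hence a vector $v\in T_qL_q$ normal to $\mathcal O(q)$ would force a nonzero \emph{parallel} $L_q$--Jacobi field $J$ with $J(0)=v$; but $J''=0$ gives $R_{\gamma'}J=0$, contradicting $K(q)>0$. This yields $\mathrm{codim}\,\mathcal O(q)=0$ without ever needing $\F^{\#}$ to be a foliation, and the conclusion $\mathcal O(q)=M$ follows since the orbits partition the connected manifold $M$ into open sets.
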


This paper is organized as follows:
In Section \ref{section-background} we review a few facts on geometric control theory and Finsler geometry that will be used in this paper. 
Item (a) of Theorem \ref{theorem-dualizacao} is proved in Section \ref{section-proof-Theorem-a}. 
Proposition \ref{proposition-Orbit-positive} (and hence item (b) of Theorem \ref{theorem-dualizacao})  
 is proved in Section \ref{section-prof-main-proposition}, 
accepting a few facts on Wilking's transversal Jacobi fields and the Jacobi triples in Finsler case, 
that are revised in  Section \ref{Wilking-transverse-Jacobi-fields}.
In particular, we hope that the review presented 
in  Section \ref{Wilking-transverse-Jacobi-fields} 
allow   Finslerian geometers
 to come into contact 
with a tool that has been  useful in the study of Riemannian submersions and (singular) Riemannian foliations.



\vspace{\baselineskip}

\section{Background}
\label{section-background}

\subsection{A few facts on geometric control theory}

Here we review a few results and definitions on geometric control theory 
extracted from the classical book of Agrachev and Sachkov \cite[Chapters 5, 8]{Agrachev-Sachkov}
and \cite{Stefan,Sussmann}.

Let $N$ be a manifold and  $\mathcal{C} = \{ \vec{f}_u \}$ be a set of smooth (analytic) vector fields \emph{everywhere defined,} i.e., the union
of the domains of elements of $\mathcal{C}$ is $N.$ This condition will be always used in this paper.

The \emph{attainable set} of the family $\mathcal{C}$ through $q$ is defined as:
$$\mathcal{A}_{q}(\mathcal{C})=\{ e^{t_k \vec{f}_k}\circ \cdots \circ e^{t_1 \vec{f}_1} (q),\,  t_{i} \geq 0, \,  k\in \mathbb{N},\,  \vec{f}_{i}\in\mathcal{C} \} $$
where $e^{t \vec{f}_i}$ is the flow of $\vec{f}_i\in \mathcal{C}$ in instant $t$. 
The \emph{orbit} of the family $\mathcal{C}$ through $q$ is 
$$\mathcal{O}(q)=\{ e^{t_k \vec{f}_k}\circ \cdots \circ e^{t_1 \vec{f}_1} (q),\,  t_{i}\in\mathbb{R},\,  k\in \mathbb{N},\,  \vec{f}_{i}\in\mathcal{C} \} $$

Orbits have  nice structures as we see in the next result.

\begin{theorem}[Nagano-Stefan-Sussmann]
For a given set of  vector fields $\mathcal{C}$ everywhere defined on a smooth manifold $N$, the 
partition $\{\mathcal{O}(q)\}_{q\in N}$ is a singular foliation, i.e, 
\begin{enumerate}
\item each orbit is an
immersed submanifold;
\item for each  $v_q\in T_{q}\mathcal{O}(q)$ there exists a vector field $\vec{v}$ on $N$ so that
$\vec{v}(q)=v_q$ and $\vec{v}(p)\in T_{p}\mathcal{O}(p), \forall p\in N.$
\end{enumerate}
\end{theorem}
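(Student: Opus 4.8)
The plan is to run the classical argument of Stefan and Sussmann (the ``orbit theorem''). Let $\mathcal{P}$ be the group of (local) diffeomorphisms of $N$ generated by all the flows $\{e^{t\vec{f}}\,:\,\vec{f}\in\mathcal{C},\ t\in\mathbb{R}\}$, so that $\mathcal{O}(q)=\{P(q)\,:\,P\in\mathcal{P}\}$ is exactly the $\mathcal{P}$-orbit of $q$. Call a vector field \emph{admissible} if it has the form $Y=P_{*}\vec{f}$ with $P\in\mathcal{P}$, $\vec{f}\in\mathcal{C}$; such a $Y$ is smooth where $P^{-1}$ is defined, its flow $P\circ e^{t\vec{f}}\circ P^{-1}$ again lies in $\mathcal{P}$, and if $Q\in\mathcal{P}$ then $Q_{*}Y=(QP)_{*}\vec{f}$ is admissible too. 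For $p\in N$ set
\[
\Delta_{p}=\operatorname{span}\{\,Y(p)\,:\,Y\ \text{admissible and defined at }p\,\}\subseteq T_{p}N .
\]
The decisive elementary fact, immediate from the group structure of $\mathcal{P}$ (which gives $P^{-1}\in\mathcal{P}$ and $P_{*}Y$ admissible), is that $\Delta$ is $\mathcal{P}$-invariant: $dP_{p}(\Delta_{p})=\Delta_{P(p)}$ for every $P\in\mathcal{P}$. In particular $\dim\Delta_{p}$ is constant along each orbit.

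Next I would produce local integral manifolds. Fix $q$, set $k=\dim\Delta_{q}$, and pick admissible fields $Y_{1},\dots,Y_{k}$ whose values $Y_{1}(q),\dots,Y_{k}(q)$ form a basis of $\Delta_{q}$. Consider
\[
G(t_{1},\dots,t_{k})=e^{t_{1}Y_{1}}\circ\cdots\circ e^{t_{k}Y_{k}}(q)
\]
for $t$ near $0$. Since $\partial_{t_{j}}G(0)=Y_{j}(q)$, the map $G$ is an immersion near $0$; on a small enough cube $U$ it is an embedding, so $S=G(U)$ is a $k$-dimensional embedded submanifold through $q$ with $S\subseteq\mathcal{O}(q)$. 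Differentiating at an arbitrary $t\in U$ and writing $G(t)=P(p'')$ with $P=e^{t_{1}Y_{1}}\circ\cdots\circ e^{t_{j-1}Y_{j-1}}\in\mathcal{P}$ and $p''=e^{t_{j}Y_{j}}\circ\cdots\circ e^{t_{k}Y_{k}}(q)$, a short computation gives $\partial_{t_{j}}G(t)=(P_{*}Y_{j})(G(t))\in\Delta_{G(t)}$, because $P_{*}Y_{j}$ is admissible. Hence $T_{G(t)}S\subseteq\Delta_{G(t)}$; since $\dim\Delta_{G(t)}=k$ along the orbit and $G$ is an immersion, $T_{p'}S=\Delta_{p'}$ for every $p'\in S$, i.e.\ $S$ is an integral manifold of $\Delta$.

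It remains to glue the slices $S$ obtained around the various points of $\mathcal{O}(q)$ into one smooth manifold. Declaring the images of small cubes under the maps $G$ to be open sets and the $G^{-1}$ to be charts, one checks that two overlapping slices are open in one another and that the transition maps are smooth; this follows from the constant rank of $\Delta$ along the orbit together with its $\mathcal{P}$-invariance, via an implicit-function argument. This equips $\mathcal{O}(q)$ with a connected smooth manifold topology (in general finer than the subspace topology) for which $\mathcal{O}(q)\hookrightarrow N$ is an injective immersion with $T_{p}\mathcal{O}(p)=\Delta_{p}$ for all $p$, which is item $(1)$. For item $(2)$, given $v_{q}\in T_{q}\mathcal{O}(q)=\Delta_{q}$, write $v_{q}=\sum_{i}c_{i}Y_{i}(q)$ with the $Y_{i}$ admissible, choose a bump function $\chi$ equal to $1$ near $q$ and supported in the common domain of the $Y_{i}$, and set $\vec{v}=\chi\sum_{i}c_{i}Y_{i}$: this is a smooth vector field on all of $N$ with $\vec{v}(q)=v_{q}$, and $\vec{v}(p)\in\Delta_{p}=T_{p}\mathcal{O}(p)$ for every $p$, since $\Delta_{p}$ is a linear subspace and $Y_{i}(p)\in\Delta_{p}$ wherever it is defined.

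The step I expect to be the main obstacle is the gluing in the third paragraph: showing that the locally constructed slices are mutually compatible, so that $\mathcal{O}(q)$ genuinely carries a manifold structure with smooth transition functions. Concretely, the crux is to prove that flowing a piece of one slice by an element of $\mathcal{P}$ lands inside another slice, with the correspondence depending smoothly on the flow time and on the slice coordinates; this is precisely where the invariance of $\Delta$ under the pseudogroup $\mathcal{P}$ is used in an essential, quantitative way, and where the arguments of Stefan and Sussmann do their real work.
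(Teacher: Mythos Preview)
The paper does not prove this theorem: it is quoted in Section~\ref{section-background} as background, with references to Agrachev--Sachkov, Stefan, and Sussmann, and no argument is given. Your sketch is precisely the classical Stefan--Sussmann orbit-theorem proof found in those references (build the $\mathcal{P}$-invariant distribution $\Delta$ from pushforwards of fields in $\mathcal{C}$, produce local integral slices via iterated flows, then glue), so there is nothing to compare; your outline is correct and your identification of the gluing step as the substantive point is accurate.
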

Recall that when the leaves of a singular foliation have the same dimension, the singular foliation
is called \emph{regular foliation} or just \emph{foliation}.

Set $\mathrm{Lie}(\mathcal{C}) \, :=\mathrm{Span} \{ [\vec{f}_1,[\ldots, [\vec{f}_{k-1},\vec{f}_{k}]\ldots ] ], \vec{f}_{i}\in \mathcal{C}, k\in\mathbb{N} \}.$
With this concept we can establish conditions under which the orbit coincides with the manifold.

\begin{corollary}[Rashevsky-Chow]
\label{theorem5-9-agrachev}
Let $N$ be a connected manifold and $\mathcal{C}$ a set of vector fields. If $\mathrm{Lie}_{q}(\mathcal{C})=T_{q}N, \forall q\in N$,
then $N=\mathcal{O}(q), \forall q\in N$.
\end{corollary}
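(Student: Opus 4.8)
The plan is to derive this as a direct consequence of the Nagano–Stefan–Sussmann theorem stated just above. First I would fix an arbitrary $q\in N$ and consider its orbit $\mathcal{O}(q)$. By part (1) of the Nagano–Stefan–Sussmann theorem, $\mathcal{O}(q)$ is an immersed submanifold of $N$; the goal is to show it is open and closed, so that connectedness of $N$ forces $\mathcal{O}(q)=N$.

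The key step is to identify the tangent space $T_{p}\mathcal{O}(p)$ with a space containing $\mathrm{Lie}_{p}(\mathcal{C})$ for every $p$. By part (2) of the theorem, every bracket $[\vec{f}_1,[\ldots,[\vec{f}_{k-1},\vec{f}_k]\ldots]]$ of elements of $\mathcal{C}$ is, when evaluated at $p$, tangent to $\mathcal{O}(p)$: indeed each $\vec{f}_i\in\mathcal{C}$ is tangent to all orbits by property (2), and the space of vector fields everywhere tangent to the singular foliation $\{\mathcal{O}(p)\}$ is closed under Lie bracket (the bracket of two vector fields tangent to an immersed submanifold is again tangent to it). Hence $\mathrm{Lie}_{p}(\mathcal{C})\subseteq T_{p}\mathcal{O}(p)$ for all $p$. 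Combined with the hypothesis $\mathrm{Lie}_{q}(\mathcal{C})=T_{q}N$, this gives $T_{q}\mathcal{O}(q)=T_{q}N$, i.e. $\dim\mathcal{O}(q)=\dim N$. Since $\mathcal{O}(q)$ is an immersed submanifold of full dimension, it is open in $N$.

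Finally, openness of every orbit yields closedness: the orbits partition $N$, so the complement of $\mathcal{O}(q)$ is the union of the other orbits $\mathcal{O}(p)$, each of which is open by the same argument (applying the dimension count at $p$, noting $\mathrm{Lie}_{p}(\mathcal{C})=T_{p}N$ holds by hypothesis for every point). Thus $\mathcal{O}(q)$ is open and closed in the connected manifold $N$, and nonempty, so $\mathcal{O}(q)=N$. Since $q$ was arbitrary, $N=\mathcal{O}(q)$ for all $q\in N$. I do not expect a genuine obstacle here; the only point requiring care is the standard fact that vector fields tangent to the leaves of a singular foliation form a Lie subalgebra, which is exactly what property (2) of Nagano–Stefan–Sussmann encodes.
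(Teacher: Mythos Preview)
Your argument is correct. Note, however, that the paper does not actually supply a proof of this corollary: it is listed in the background section as a classical result quoted from Agrachev--Sachkov, so there is no ``paper's own proof'' to compare against. Your derivation from the Nagano--Stefan--Sussmann theorem is the standard one.

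One small quibble: the reason each $\vec{f}_i\in\mathcal{C}$ is tangent to every orbit is not really property~(2) of the theorem (which concerns extending tangent vectors to foliate vector fields) but the more elementary observation that the flow $e^{t\vec{f}_i}$ maps each orbit into itself by the very definition of $\mathcal{O}(q)$. Once you have that, your closure-under-brackets step gives $\mathrm{Lie}_p(\mathcal{C})\subseteq T_p\mathcal{O}(p)$, and the open--closed argument via the partition into orbits goes through exactly as you wrote.
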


A submodule $\mathcal{V}$ (e.g., $\mathcal{V}=\mathrm{Lie}(\mathcal{C})$) 
is \emph{locally finitely generated over} $C^{\infty}(N)$, if for each point $q$, there exists a neigborhood $U$ of $q$ and 
vector fields $\vec{v}_{1}, \cdots, \vec{v}_{k}$ of $\mathcal{V}$ with domain containing $U$  so that  
$\mathcal{V}|_{U}=\{\sum_{i=1}^{k} a_{i} \vec{v}_{i} | a_{i}\in C^{\infty}(U)  \}.$ 

\begin{remark}
\label{remark-analicidade-localmente-finito}
\emph{if a module $\mathcal{V}$ is generated by analytic vector fields, it is locally finitely generated.}
This fact makes it possible to use relevant results on attainable  sets and it is  the main reason why we 
assume analicity in Theorem  \ref{theorem-dualizacao}.
\end{remark}

\begin{proposition} Let $N$ be a manifold and $q_0\in N$. 
If $\mathrm{Lie}(\mathcal{C})$ is locally finitely generated over $C^{\infty}(N)$, 
(in particular when $\mathcal{C}$ and $N$ are analytic) then $\mathrm{Lie}_{q}(\mathcal{C})=T_{q}\mathcal{O}(q_0)$ for 
$q\in\mathcal{O}(q_0)$ and for all orbits $\mathcal{O}(q_0).$
\end{proposition}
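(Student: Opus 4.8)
The statement to prove is: if $\mathrm{Lie}(\mathcal{C})$ is locally finitely generated over $C^\infty(N)$, then $\mathrm{Lie}_q(\mathcal{C}) = T_q\mathcal{O}(q_0)$ for all $q \in \mathcal{O}(q_0)$ and all orbits. Let me think about the structure.

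The plan is to prove two inclusions at each point $q$ of a fixed orbit $\mathcal{O}(q_0)$: the easy one, $\mathrm{Lie}_q(\mathcal{C}) \subseteq T_q\mathcal{O}(q_0)$, and the harder one, $T_q\mathcal{O}(q_0) \subseteq \mathrm{Lie}_q(\mathcal{C})$.

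For the easy inclusion: by the Nagano–Stefan–Sussmann theorem, each orbit is an immersed submanifold whose tangent space at $q$ contains $\vec f_u(q)$ for every $u$, and more generally is invariant under the flows of elements of $\mathcal{C}$. Since the flow of $\vec f_u$ maps $\mathcal{O}(q)$ to itself, the tangent space $T_q\mathcal{O}(q)$ is invariant under $(e^{t\vec f_u})_*$; differentiating the fact that $\vec f_v(e^{t\vec f_u}q) \in T_{e^{t\vec f_u}q}\mathcal{O}$ at $t=0$ shows $[\vec f_u, \vec f_v](q) \in T_q\mathcal{O}(q)$, and by induction all iterated brackets lie in $T_q\mathcal{O}(q)$. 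Hence $\mathrm{Lie}_q(\mathcal{C}) \subseteq T_q\mathcal{O}(q_0)$ for $q$ in the orbit. (One can also simply invoke that the orbit is an integral manifold of the distribution generated by $\mathcal{C}$ and all its brackets, which is part of the standard orbit theorem.) The hard direction is the reverse containment, and this is where the local-finite-generation hypothesis is essential.

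For the hard inclusion I would argue as follows. Fix the orbit $L = \mathcal{O}(q_0)$. Consider the distribution $\Delta$ on $N$ defined by $\Delta_p = \mathrm{Lie}_p(\mathcal{C})$. The key point is that, because $\mathrm{Lie}(\mathcal{C})$ is a locally finitely generated submodule of vector fields that is closed under the Lie bracket (by its very definition, $\mathrm{Lie}(\mathcal{C})$ is a Lie subalgebra of $\mathfrak{X}(N)$, being the span of all iterated brackets), one can appeal to the Stefan–Sussmann integrability theorem: a locally finitely generated involutive module of vector fields is integrable, and its integral manifolds through $p$ are exactly the orbits of the pseudogroup generated by the flows of its elements. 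Since every element of $\mathcal{C}$ lies in $\mathrm{Lie}(\mathcal{C})$, the orbit $\mathcal{O}(q_0)$ of $\mathcal{C}$ is contained in the integral leaf $S$ of $\Delta$ through $q_0$; and since $\mathrm{Lie}_q(\mathcal{C}) \subseteq T_qL$ from the easy direction while $T_qL \subseteq \Delta_q = \mathrm{Lie}_q(\mathcal{C})$ would follow once we know $L$ and $S$ have the same dimension near $q_0$, we need to show the two leaves actually coincide. I would establish this by a dimension/rank argument: the function $p \mapsto \dim \mathrm{Lie}_p(\mathcal{C})$ is lower semicontinuous (ranks of a spanning set of generators can only jump up), hence is locally constant on a dense open set; combined with the invariance of $\mathrm{Lie}(\mathcal{C})$ under the flows of $\mathcal{C}$ — which forces $\dim\mathrm{Lie}_p(\mathcal{C})$ to be constant along each orbit — and the fact that $S \supseteq L$ with $S$ connected of dimension $\dim\mathrm{Lie}_{q_0}(\mathcal{C})$, one concludes $\dim L = \dim S$, so $L$ is open in $S$, and then $T_qL = \Delta_q = \mathrm{Lie}_q(\mathcal{C})$ for all $q \in L$.

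The main obstacle is making the step "$\dim\mathrm{Lie}_p(\mathcal{C})$ is constant along the orbit, equal to $\dim T_pL$" fully rigorous without circularity. The cleanest route is to cite the precise form of the orbit theorem for locally finitely generated involutive modules (this is exactly the setting of Agrachev–Sachkov, Chapter 5, and of Stefan's and Sussmann's original papers, already referenced in the excerpt): there the integral manifold of $\mathrm{Lie}(\mathcal{C})$ through $q_0$ is *defined* to be the orbit, and the theorem asserts $T_q\mathcal{O}(q_0) = \mathrm{Lie}_q(\mathcal{C})$ directly. So in practice I would structure the proof as: (1) recall that $\mathrm{Lie}(\mathcal{C})$ is involutive and, by hypothesis, locally finitely generated; (2) invoke the Stefan–Sussmann theorem in this form to identify the orbit with the maximal integral manifold of $\mathrm{Lie}(\mathcal{C})$; (3) read off the equality of tangent spaces. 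The analytic case follows from Remark \ref{remark-analicidade-localmente-finito}. I would keep the write-up short, since the substance is a citation to the standard geometric control literature already set up in this section.
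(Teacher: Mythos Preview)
The paper does not actually prove this proposition: it is stated in Section~\ref{section-background} as part of the review of geometric control theory, with the results attributed to Agrachev--Sachkov \cite[Chapters 5, 8]{Agrachev-Sachkov} and to the original papers of Stefan and Sussmann. Your proposal, which in the end amounts to invoking the orbit theorem for locally finitely generated involutive modules from those same references, is therefore in line with the paper's treatment.

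One comment on the argument itself: the intermediate dimension argument you sketch (lower semicontinuity of $p\mapsto\dim\mathrm{Lie}_p(\mathcal{C})$, constancy along orbits, then $L$ open in $S$) is, as you yourself note, awkward to make non-circular, because showing that the $\mathcal{C}$-orbit and the $\mathrm{Lie}(\mathcal{C})$-orbit coincide is essentially the content of the result you are trying to prove. The substantive step is that under the locally finitely generated hypothesis every flow of a vector field in $\mathrm{Lie}(\mathcal{C})$ can be uniformly approximated (and in fact realized, via the standard bracket-generating argument) by compositions of flows of elements of $\mathcal{C}$, so the two orbits agree. Your final plan, to simply cite the Stefan--Sussmann/Nagano theorem in the form given in \cite{Agrachev-Sachkov}, is the right call and matches exactly what the paper does.
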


Different from orbits, the attainable sets do not need to be immersed submanifolds. But in the case where $\mathrm{Lie}(\mathcal{C})$ is 
locally finitely generated (e.g., $\mathcal{C}$ is analytic), they still have some interesting properties.

\begin{theorem}[Krener]
If $\mathrm{Lie}(\mathcal{C})$ is locally finitely generated, then $\mathrm{int}\big(\mathcal{A}_{q}(\mathcal{C})\big)$
is dense in $\mathcal{A}(q) \subset \mathcal{O}(q)$. Here the density is with respect to the topology of $\mathcal{O}(q)$. 
In particular $\mathrm{int} \big(\mathcal{A}_{q}(\mathcal{C})\big) \neq \emptyset$.
\end{theorem}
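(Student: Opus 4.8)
The plan is to derive both assertions from a single stronger \emph{local} statement: for every $p\in\mathcal{A}_q(\mathcal{C})$ and every neighbourhood $O$ of $p$ in the orbit $\mathcal{O}(q)$, the set $\mathcal{A}_p(\mathcal{C})\cap O$ should contain a nonempty subset that is open in $\mathcal{O}(q)$. Since $\mathcal{A}_p(\mathcal{C})\subset\mathcal{A}_q(\mathcal{C})$ (anything reachable from $p$ is reachable from $q$), this immediately yields density of $\mathrm{int}\big(\mathcal{A}_q(\mathcal{C})\big)$ in $\mathcal{A}_q(\mathcal{C})$ for the topology of $\mathcal{O}(q)$, and, taking $p=q$, nonemptiness of $\mathrm{int}\big(\mathcal{A}_q(\mathcal{C})\big)$. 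I would work throughout inside the immersed submanifold $\mathcal{O}(q)$ produced by the Nagano--Stefan--Sussmann theorem, write $n=\dim\mathcal{O}(q)$, and dispose of the trivial case $n=0$ (then $\mathcal{A}_q(\mathcal{C})=\mathcal{O}(q)$ is a point) at the start.

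The core is an inductive construction of concatenation maps of strictly increasing rank. I would build, for $k=1,\dots,n$, vector fields $\vec{f}_1,\dots,\vec{f}_k\in\mathcal{C}$, an open set $V_k\subset(0,\infty)^k$ and a point $t^{(k)}\in V_k$ so that
\[
G_k(t_1,\dots,t_k)=e^{t_k\vec{f}_k}\circ\cdots\circ e^{t_1\vec{f}_1}(p)
\]
is an embedding of $V_k$ onto a $k$-dimensional submanifold $\Sigma_k\subset O$ of $\mathcal{O}(q)$, every point of which is reached from $p$ along flows in $\mathcal{C}$ run for strictly positive times. For $k=1$ this is easy: since $n\geq 1$, not every $\vec{f}\in\mathcal{C}$ can vanish at $p$ (otherwise $\mathcal{O}(p)=\{p\}$), so some $\vec{f}_1(p)\neq 0$ and $t_1\mapsto e^{t_1\vec{f}_1}(p)$ is an immersion whose image lies in $O$ for $t_1$ in a small interval $(0,\varepsilon)$. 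For the step $k\to k+1$ with $k<n$, I would argue by contradiction: if every $\vec{f}\in\mathcal{C}$ were tangent to $\Sigma_k$ along $\Sigma_k$, then $\Sigma_k$ would be locally invariant under all flows of $\mathcal{C}$, so the orbit through a point of $\Sigma_k$ would be contained in $\Sigma_k$ near that point and hence have dimension at most $k$; but that orbit is $\mathcal{O}(q)$, of dimension $n>k$, a contradiction. Hence there exist $\vec{f}_{k+1}\in\mathcal{C}$ and $\hat t\in V_k$ with $\vec{f}_{k+1}\big(G_k(\hat t)\big)\notin T_{G_k(\hat t)}\Sigma_k$, and then
\[
G_{k+1}(t_1,\dots,t_{k+1})=e^{t_{k+1}\vec{f}_{k+1}}\big(G_k(t_1,\dots,t_k)\big)
\]
has rank $k+1$ at $(\hat t,0)$, since its differential there has image $T_{G_k(\hat t)}\Sigma_k+\mathbb{R}\,\vec{f}_{k+1}\big(G_k(\hat t)\big)$.

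Because the rank is lower semicontinuous and cannot exceed $k+1$, $G_{k+1}$ stays an immersion on a neighbourhood of $(\hat t,0)$; choosing the new time $t_{k+1}^{0}>0$ small and using continuity of the single added flow $e^{t_{k+1}\vec{f}_{k+1}}$, I can arrange that $G_{k+1}$ restricts to an embedding of a neighbourhood $V_{k+1}\subset(0,\infty)^{k+1}$ of $t^{(k+1)}:=(\hat t,t_{k+1}^{0})$ onto a $(k+1)$-dimensional submanifold $\Sigma_{k+1}\subset O$, still reached from $p$ in positive time. This completes the induction. When $k=n$ is reached, $G_n$ is an embedding of the $n$-dimensional open set $V_n$ into the $n$-dimensional manifold $\mathcal{O}(q)$, hence a diffeomorphism onto an open subset $W\subset O$ of $\mathcal{O}(q)$; every point of $W$ lies in $\mathcal{A}_p(\mathcal{C})\subset\mathcal{A}_q(\mathcal{C})$, so $W\subset\mathrm{int}\big(\mathcal{A}_q(\mathcal{C})\big)$, which proves the local statement. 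The step I expect to be the main obstacle is the inductive one: converting transversality of $\vec{f}_{k+1}$ to $\Sigma_k$ \emph{at time zero} into genuine full rank of $G_{k+1}$ \emph{at a strictly positive time}, while keeping the enlarged submanifold inside the prescribed neighbourhood $O$ and inside the positive-time reachable set. This is exactly where lower semicontinuity of the rank does the work, and it matters that only one new vector field is switched on (for a short time) at each stage, so that no uniform bound over the possibly infinite family $\mathcal{C}$ is needed; the orbit theorem enters only through the dimension count that forces a transversal field whenever $k<n$, the hypothesis that $\mathrm{Lie}(\mathcal{C})$ be locally finitely generated playing no further role beyond what is already encoded in the orbit structure.
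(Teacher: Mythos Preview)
The paper does not prove Krener's theorem; it is listed in Section~\ref{section-background} among background results quoted from Agrachev--Sachkov \cite[Chapter~8]{Agrachev-Sachkov}, without proof. Your argument is essentially the classical one given there: an inductive construction of positive-time concatenation maps $G_k$ of strictly increasing rank, terminating when the rank reaches $\dim\mathcal{O}(q)$, at which point the image of $G_n$ is open in the orbit and contained in $\mathcal{A}_p(\mathcal{C})\subset\mathcal{A}_q(\mathcal{C})$. The contradiction step (all fields tangent to $\Sigma_k$ forces the orbit to have dimension $\le k$) and the passage from rank $k+1$ at $(\hat t,0)$ to a genuine embedding with all times strictly positive are handled correctly.

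Your closing remark that the locally finitely generated hypothesis ``plays no further role beyond what is already encoded in the orbit structure'' is accurate for this relative formulation: once one works inside the immersed orbit supplied by Stefan--Sussmann, the dimension count and local-invariance contradiction you use require nothing more. In Agrachev--Sachkov the hypothesis enters to identify $T_q\mathcal{O}(q)$ with $\mathrm{Lie}_q(\mathcal{C})$, but your argument sidesteps that by arguing directly with flows rather than with Lie brackets.
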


Let us now move towards results that will allow us to conclude that  $\mathcal{A}_{q_0}(\mathcal{C})=\mathcal{O}(q_0)$ 
under suitable hypotheses.

\begin{definition}
Given a complete vector field $\vec{\mathtt{g}}$ on $N$, 
a point $q\in N$ is called \emph{Poisson stable} for $\vec{\mathtt{g}}$
if for any $t_0 > 0$ and any neighborhood $W$ of $q$
there exists a point $x \in W$ and a time $t_1 > t_0$
such that $e^{t_{1} \vec{\mathtt{g}}}(x) \in W$.
The vector field $\vec{\mathtt{g}}$ is \emph{Poisson stable} 
if all points are Poisson stable.
\end{definition}

\begin{proposition}[Poincar\'{e}]
\label{proposition-prop-815-Poincare}
Assume that $N$ is compact and 
the flow $e^{t \vec{\mathtt{g}}}$ of a complete vector field $\vec{\mathtt{g}}$ preserves a volume of $N$. 
Then $\vec{\mathtt{g}}$ is Poisson stable.
\end{proposition}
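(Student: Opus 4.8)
The plan is to prove the Poincar\'e recurrence theorem by the classical volume-counting argument. Let $\mu$ denote the volume form (or associated measure) on $N$ that is preserved by the flow $e^{t\vec{\mathtt{g}}}$, so $\mu(e^{t\vec{\mathtt{g}}}(A)) = \mu(A)$ for every measurable $A\subset N$ and every $t\in\mathbb{R}$; since $N$ is compact, $\mu(N) < \infty$. Fix $q\in N$, a time $t_0 > 0$, and a neighborhood $W$ of $q$; we may shrink $W$ so that $\mu(W) > 0$ still holds (any open neighborhood works). We want to produce a point $x\in W$ and a time $t_1 > t_0$ with $e^{t_1\vec{\mathtt{g}}}(x)\in W$.

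First I would set $\varphi := e^{t_0\vec{\mathtt{g}}}$, the time-$t_0$ map, which is a volume-preserving diffeomorphism of $N$, and consider the backward iterates $\varphi^{-n}(W)$ for $n = 0, 1, 2, \dots$. Each has the same positive volume $\mu(W)$, and they all sit inside the finite-volume space $N$, so they cannot be pairwise disjoint: there exist integers $0 \le m < n$ with $\varphi^{-m}(W)\cap \varphi^{-n}(W) \neq \emptyset$. Applying $\varphi^{m}$ gives $W\cap \varphi^{-(n-m)}(W)\neq\emptyset$, i.e. there is a point $y\in W$ with $\varphi^{k}(y)\in W$ where $k := n-m \ge 1$. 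Unwinding the definition of $\varphi$, this says $e^{k t_0\vec{\mathtt{g}}}(y)\in W$, and $t_1 := k t_0 \ge t_0 > 0$. To get the strict inequality $t_1 > t_0$ demanded in the definition, I would simply run the same argument with $t_0$ replaced by $t_0' := t_0/2$ (or any value in $(0,t_0)$): then $t_1 = k t_0' \ge t_0' $ and whenever $k\ge 3$ we already have $t_1 > t_0$; if $k\in\{1,2\}$ one notes that a recurrence at time $k t_0'$ forces, by iterating $\varphi^{k}$ again, a recurrence at $2k t_0' = k t_0 > t_0$ (still starting from a point of $W$, since $\varphi^{jk}(W)\cap W\neq\emptyset$ for all $j$ once it is nonempty for $j=1$). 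So a point $x\in W$ and time $t_1 > t_0$ with $e^{t_1\vec{\mathtt{g}}}(x)\in W$ always exist, which is exactly Poisson stability of $q$; since $q$ was arbitrary, $\vec{\mathtt{g}}$ is Poisson stable.

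The only mild subtlety — and the step I'd flag as the one needing a word of care — is the strictness $t_1 > t_0$ versus $t_1 \ge t_0$ and the measurability of $W$ and its iterates; the latter is automatic because $W$ is open and $\varphi$ is a homeomorphism, so every $\varphi^{-n}(W)$ is open hence Borel. Everything else (finiteness of $\mu(N)$, invariance of $\mu$ under the flow, the pigeonhole on disjoint equal-volume sets) is immediate from compactness and the volume-preservation hypothesis, so there is no real obstacle; this is a textbook recurrence argument, included here only because the paper aims to be self-contained.
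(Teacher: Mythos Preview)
The paper does not actually prove this proposition: it is listed among the background facts from geometric control theory cited from Agrachev--Sachkov \cite{Agrachev-Sachkov}, with no proof given, so there is nothing to compare against. Your argument is the standard Poincar\'e recurrence proof and is essentially correct.

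One small correction is worth flagging. Your assertion that ``$\varphi^{jk}(W)\cap W\neq\emptyset$ for all $j$ once it is nonempty for $j=1$'' is not true in general (an irrational circle rotation and a short arc give a counterexample), so your treatment of the strict inequality $t_1>t_0$ does not quite work as written. The fix is simpler than what you attempted: just run the pigeonhole argument with $\varphi:=e^{s\vec{\mathtt{g}}}$ for any fixed $s>t_0$ (for instance $s=2t_0$); then $t_1=ks\geq s>t_0$ automatically, and no iteration trick is needed.
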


\begin{definition}
A complete vector field $\vec{f}$ tangent to the orbits of $\mathcal{C}$ is called \emph{compatible with} $\mathcal{C}$ 
if $\mathcal{A}_{q}(\mathcal{C})$ is dense in $\mathcal{A}_{q}(\mathcal{C}\cup \vec{f})$, 
with respect to the topology of the orbits.
\end{definition}

\begin{proposition}
\label{proposition-prop-814-Agrachev}
Assume that $\mathrm{Lie}(\mathcal{C})$ is locally finitely generated.
If a complete vector field $\vec{\mathtt{g}}\in \mathcal{C}$ is Poisson stable in the orbit associated to $\mathcal{C}$, 
then $-\vec{\mathtt{g}}$ is compatible  with $\mathcal{C}$.
\end{proposition}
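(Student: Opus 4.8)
The plan is to run the classical control-theoretic argument (in the spirit of Agrachev--Sachkov), carried out entirely inside the orbit $\mathcal{O}(q)$, which we regard as an immersed submanifold equipped with its intrinsic topology; note that, since $\vec{\mathtt{g}}\in\mathcal{C}$, adjoining $-\vec{\mathtt{g}}$ to $\mathcal{C}$ does not enlarge the orbit. Writing $\mathcal{C}'=\mathcal{C}\cup\{-\vec{\mathtt{g}}\}$, we must show $\mathcal{A}_{q}(\mathcal{C}')\subseteq\overline{\mathcal{A}_{q}(\mathcal{C})}$, closure taken in the orbit topology, the reverse inclusion being obvious. First I would reduce the whole statement to the single claim
\[
(\star)\qquad e^{-s\vec{\mathtt{g}}}\big(\mathcal{A}_{q}(\mathcal{C})\big)\subseteq\overline{\mathcal{A}_{q}(\mathcal{C})}\quad\text{for every }s\geq 0 .
\]
Indeed, for $X\in\mathcal{C}$ and $t\geq 0$ one has $e^{tX}(\mathcal{A}_{q}(\mathcal{C}))\subseteq\mathcal{A}_{q}(\mathcal{C})$, so, $e^{tX}$ being a homeomorphism, $e^{tX}$ maps $\overline{\mathcal{A}_{q}(\mathcal{C})}$ into itself; granting $(\star)$, the same continuity argument shows $e^{-s\vec{\mathtt{g}}}$ maps $\overline{\mathcal{A}_{q}(\mathcal{C})}$ into itself as well (using $\overline{\overline{S}}=\overline{S}$). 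Hence $\overline{\mathcal{A}_{q}(\mathcal{C})}$ is a closed set containing $q$ and invariant under all the flows $e^{tX}$, $X\in\mathcal{C}'$, $t\geq 0$, that generate $\mathcal{A}_{q}(\mathcal{C}')$; therefore it contains $\mathcal{A}_{q}(\mathcal{C}')$.

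To prove $(\star)$ it suffices to verify it for $p$ in the interior of $\mathcal{A}_{q}(\mathcal{C})$ relative to the orbit: since $\mathrm{Lie}(\mathcal{C})$ is locally finitely generated, Krener's theorem guarantees that this interior is dense in $\mathcal{A}_{q}(\mathcal{C})$, so a general reachable point $p$ is a limit of interior points $p_{n}$ and $e^{-s\vec{\mathtt{g}}}(p)=\lim_{n}e^{-s\vec{\mathtt{g}}}(p_{n})\in\overline{\mathcal{A}_{q}(\mathcal{C})}$ by continuity. So fix $p\in\mathrm{int}\,\mathcal{A}_{q}(\mathcal{C})$, $s\geq 0$, and an arbitrary open neighborhood $W$ of $z:=e^{-s\vec{\mathtt{g}}}(p)$; I must find a point of $\mathcal{A}_{q}(\mathcal{C})$ in $W$. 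The set $\widetilde{W}:=e^{s\vec{\mathtt{g}}}(W)\cap\mathrm{int}\,\mathcal{A}_{q}(\mathcal{C})$ is an open neighborhood of $p$ (it contains $p$ and is open). Applying the Poisson stability of $\vec{\mathtt{g}}$ in the orbit, at the point $p$, to the neighborhood $\widetilde{W}$ and with $t_{0}=s$, we obtain $x\in\widetilde{W}$ and $t_{1}>s$ with $e^{t_{1}\vec{\mathtt{g}}}(x)\in\widetilde{W}$. Then $x\in\mathrm{int}\,\mathcal{A}_{q}(\mathcal{C})\subseteq\mathcal{A}_{q}(\mathcal{C})$, so, since $\vec{\mathtt{g}}\in\mathcal{C}$ and $t_{1}-s\geq 0$, the semigroup property of attainable sets gives $e^{(t_{1}-s)\vec{\mathtt{g}}}(x)\in\mathcal{A}_{q}(\mathcal{C})$; on the other hand $e^{t_{1}\vec{\mathtt{g}}}(x)\in e^{s\vec{\mathtt{g}}}(W)$ gives $e^{(t_{1}-s)\vec{\mathtt{g}}}(x)=e^{-s\vec{\mathtt{g}}}\big(e^{t_{1}\vec{\mathtt{g}}}(x)\big)\in W$. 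Thus $e^{(t_{1}-s)\vec{\mathtt{g}}}(x)\in W\cap\mathcal{A}_{q}(\mathcal{C})$, so $z\in\overline{\mathcal{A}_{q}(\mathcal{C})}$, which proves $(\star)$ and hence the proposition.

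The only delicate point is the bookkeeping. One must arrange that Poisson stability is applied to an honest open neighborhood of $p$, which is exactly why Krener's theorem — and therefore the hypothesis that $\mathrm{Lie}(\mathcal{C})$ is locally finitely generated — enters: it lets us replace an arbitrary reachable point by one in the interior, near which the reachable set is open. One must also keep careful track of the fact that all the topological notions involved (reachable sets, their interiors and closures, continuity of flows, the recurrence in Poisson stability) refer to the intrinsic topology of the orbit rather than that of $M$. Everything else is the formal manipulation of flows through the identity $e^{t_{1}\vec{\mathtt{g}}}=e^{s\vec{\mathtt{g}}}\circ e^{(t_{1}-s)\vec{\mathtt{g}}}$ together with the semigroup property $\mathcal{A}_{x}(\mathcal{C})\subseteq\mathcal{A}_{q}(\mathcal{C})$ whenever $x\in\mathcal{A}_{q}(\mathcal{C})$.
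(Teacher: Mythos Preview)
The paper does not prove this proposition: it is stated in the background section as a known result extracted from Agrachev and Sachkov \cite{Agrachev-Sachkov}, with no proof given. Your argument is correct and is essentially the standard one from that reference: reduce to showing $e^{-s\vec{\mathtt{g}}}(\mathcal{A}_{q}(\mathcal{C}))\subseteq\overline{\mathcal{A}_{q}(\mathcal{C})}$, use Krener's theorem (which is where the locally-finitely-generated hypothesis enters) to pass to interior points, and then exploit Poisson stability together with the factorization $e^{t_{1}\vec{\mathtt{g}}}=e^{s\vec{\mathtt{g}}}\circ e^{(t_{1}-s)\vec{\mathtt{g}}}$ to land back in the attainable set. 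Your bookkeeping remarks about working in the intrinsic orbit topology are also appropriate, since the compatibility notion in the paper is phrased ``with respect to the topology of the orbits.''
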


\begin{proposition}
\label{proposition-cor-83-Agrachev}
Assume that $\mathrm{Lie}(\mathcal{C})$ is locally finitely generated. 
If $\mathcal{A}_{q_0}(\mathcal{C})$ is dense in $\mathcal{O}(q_0)$ 
then $\mathcal{A}_{q_0}(\mathcal{C})=\mathcal{O}(q_0)$.
\end{proposition}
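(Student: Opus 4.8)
The plan is to deduce Proposition~\ref{proposition-cor-83-Agrachev} by combining Krener's theorem with Proposition~\ref{proposition-prop-814-Agrachev} and Proposition~\ref{proposition-prop-815-Poincare}, reducing the statement about density to a genuine equality by a ``bootstrapping'' argument. First I would observe that, since $\mathrm{Lie}(\mathcal{C})$ is locally finitely generated, Krener's theorem applies to $\mathcal{C}$ and also to every enlargement $\mathcal{C}\cup\vec{f}$: hence $\mathrm{int}\big(\mathcal{A}_{q_0}(\mathcal{C})\big)$ is dense in $\mathcal{A}_{q_0}(\mathcal{C})$ in the topology of $\mathcal{O}(q_0)$. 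The strategy is to show that if $\mathcal{A}_{q_0}(\mathcal{C})$ is dense in $\mathcal{O}(q_0)$, then $\mathcal{A}_{q_0}(\mathcal{C})$ is already \emph{open} in $\mathcal{O}(q_0)$, and then use that an open dense subset of a connected manifold which is also \emph{closed} (as an attainable set that is saturated by the forward flows) must be the whole manifold. The missing reversibility is supplied by the Poisson-stability machinery: every $\vec{f}\in\mathcal{C}$ is tangent to the orbit, and if we can arrange that $-\vec{f}$ is compatible with $\mathcal{C}$ for each such $\vec{f}$, then $\mathcal{A}_{q_0}(\mathcal{C})$ is dense in $\mathcal{A}_{q_0}(\mathcal{C}\cup\{-\vec{f}:\vec f\in\mathcal{C}\})$, and the latter family is symmetric, so its attainable set is the orbit; chasing densities then forces $\mathcal{A}_{q_0}(\mathcal{C})=\mathcal{O}(q_0)$.

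The key steps, in order, are as follows. Step one: by hypothesis $\overline{\mathcal{A}_{q_0}(\mathcal{C})}=\mathcal{O}(q_0)$ in the orbit topology. Step two: since each $\vec{f}_u\in\mathcal{C}$ is complete and tangent to the orbit $\mathcal{O}(q_0)$, and the density hypothesis means the flow of $\vec{f}_u$ has a dense attainable set, the point $q_0$ (indeed every point of the orbit) is Poisson stable for $\vec{f}_u$ restricted to $\mathcal{O}(q_0)$ --- here one uses a recurrence argument in the spirit of Proposition~\ref{proposition-prop-815-Poincare}, noting that density of the forward semiorbit through $q_0$ forces returns to any neighbourhood. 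Step three: apply Proposition~\ref{proposition-prop-814-Agrachev} to conclude that $-\vec{f}_u$ is compatible with $\mathcal{C}$ for every $u$, hence $\mathcal{A}_{q_0}(\mathcal{C})$ is dense in $\mathcal{A}_{q_0}\big(\mathcal{C}\cup\{-\vec{f}_u\}_u\big)$. Step four: the enlarged family $\mathcal{C}':=\mathcal{C}\cup\{-\vec{f}_u\}_u$ is symmetric, so $\mathcal{A}_{q_0}(\mathcal{C}')=\mathcal{O}(q_0)$; combined with step three this gives that $\mathcal{A}_{q_0}(\mathcal{C})$ is dense in $\mathcal{O}(q_0)$ once more, but now we also know, via Krener applied to $\mathcal{C}'$, that $\mathrm{int}\,\mathcal{A}_{q_0}(\mathcal{C}')=\mathcal{O}(q_0)$ so the interior is the whole orbit. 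Step five: the final closing argument --- show $\mathcal{A}_{q_0}(\mathcal{C})$ is open (its interior is dense by Krener, and a short argument using that composition with forward flows is an open map spreads openness from the interior to all of $\mathcal{A}_{q_0}(\mathcal{C})$) and that it is closed in $\mathcal{O}(q_0)$ using the compatibility/reversibility just established; a nonempty clopen subset of the connected manifold $\mathcal{O}(q_0)$ is everything.

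The main obstacle I expect is Step five --- promoting ``dense'' to ``equal.'' Density alone is not enough; one genuinely needs the reversibility furnished by Poisson stability to show that the complement $\mathcal{O}(q_0)\setminus\mathcal{A}_{q_0}(\mathcal{C})$ cannot accumulate onto a point of $\mathcal{A}_{q_0}(\mathcal{C})$: if $x\in\mathcal{A}_{q_0}(\mathcal{C})$ and $x_n\to x$ with $x_n\notin\mathcal{A}_{q_0}(\mathcal{C})$, one wants to flow backward along some $\vec{f}_u$ a small time to land all $x_n$ inside a neighbourhood of $q_0$-reachable points and derive a contradiction, and making this precise requires carefully tracking the orbit topology (which is finer than the subspace topology from $M$) and using that attainable sets of a family with locally finitely generated Lie algebra have nonempty interior on \emph{every} piece, not just near $q_0$. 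A secondary subtlety is verifying the Poisson-stability hypothesis of Proposition~\ref{proposition-prop-814-Agrachev} purely from the density assumption rather than from a preserved volume; this is where one must be slightly careful, because Proposition~\ref{proposition-prop-815-Poincare} as stated needs compactness and an invariant volume, whereas here one should instead read off Poisson stability directly from the density of the semiorbit, which is in fact the cleaner route and avoids invoking a volume at all.
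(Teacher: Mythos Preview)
The paper does not supply its own proof of this proposition: it is quoted as background from Agrachev--Sachkov, so the comparison is with the standard argument there. Your proposal does not match that argument, and more importantly it contains a genuine gap.

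Your Step two is where the argument breaks. You claim that the density of $\mathcal{A}_{q_0}(\mathcal{C})$ in $\mathcal{O}(q_0)$ forces each individual $\vec{f}_u\in\mathcal{C}$ to be Poisson stable on the orbit. This does not follow: the hypothesis is about the attainable set of the \emph{family} $\mathcal{C}$, not about the forward semiorbit of any single $\vec{f}_u$. For a concrete counterexample take $N=\mathbb{R}$ and $\mathcal{C}=\{\partial_x,\,-2\partial_x\}$; then $\mathcal{A}_{0}(\mathcal{C})=\mathcal{O}(0)=\mathbb{R}$ so the density hypothesis holds, yet $\partial_x$ is certainly not Poisson stable on $\mathbb{R}$. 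Consequently Proposition~\ref{proposition-prop-814-Agrachev} cannot be invoked, and Steps three through five collapse. You have also inverted the logical role these propositions play in the paper: Poisson stability (Propositions~\ref{proposition-prop-815-Poincare} and~\ref{proposition-prop-814-Agrachev}) is used in Section~3 to \emph{verify} the density hypothesis of Proposition~\ref{proposition-cor-83-Agrachev}, not to prove the proposition itself.

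The standard proof is in fact much shorter and uses only Krener's theorem. Fix $y\in\mathcal{O}(q_0)$ and consider the reversed family $-\mathcal{C}=\{-\vec{f}:\vec{f}\in\mathcal{C}\}$, which has the same orbits and the same $\mathrm{Lie}$ module as $\mathcal{C}$. By Krener's theorem applied to $-\mathcal{C}$ from $y$, the set $\mathcal{A}_{y}(-\mathcal{C})$ has nonempty interior in $\mathcal{O}(q_0)$. Since $\mathcal{A}_{q_0}(\mathcal{C})$ is dense, there exists $z\in \mathcal{A}_{q_0}(\mathcal{C})\cap \mathcal{A}_{y}(-\mathcal{C})$. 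Writing $z=e^{-t_k\vec{f}_k}\circ\cdots\circ e^{-t_1\vec{f}_1}(y)$ with $t_i\ge 0$ gives $y=e^{t_1\vec{f}_1}\circ\cdots\circ e^{t_k\vec{f}_k}(z)\in\mathcal{A}_{z}(\mathcal{C})\subset\mathcal{A}_{q_0}(\mathcal{C})$. No Poisson stability, no clopen argument, no closedness of attainable sets is needed.
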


\vspace{0.5\baselineskip}

\subsection{A few facts on Finsler geometry}

In this section, we briefly review a few facts on Finsler geometry and Finsler submersions necessary to our paper, most of them 
 extracted from \cite{Alvarez-Duran}, \cite{Alexandrino-Alves-Javaloyes-SFF}, \cite{Alexandrino-Alves-Javaloyes-Equifocal} and
\cite{Foulon-Matveev}.  
A comprehensive introduction to this rich geometry can be found in \cite{book-Shen-lectures}.

\subsubsection{The metric structure}

\begin{definition}
Let $M$ be a manifold. A continuous function $F: TM \to [0,+\infty)$ is called \emph{Finsler metric} if
\begin{enumerate}
\item[(a)] $F$ is smooth on $TM \setminus \{0\}$,

\item[(b)] $F$ is positive homogeneous of degree $1$, that is, $F(\lambda v)=\lambda F(v)$ for every $v\in TM$ and $\lambda>0$,

\item[(c)] for every $p\in M$ and $v\in T_pM \setminus \{0\}$, the \emph{fundamental tensor} of $F$ defined as
$$g_{v}(u,w) = \frac{1}{2}\frac{\partial^{2}}{\partial t\partial  s} F^{2}(v+tu+sw)|_{t=s=0}$$
for any $u, w \in T_{p}M$ is a nondegenerate positive-definite bilinear symmetric form, i.e., an inner product.
\end{enumerate}
In particular, if $V$ is a vector space and $F: V \to \RR$ is a function smooth on $V \setminus \{0\}$ and satisfying the properties $(b)$ and $(c)$ above, then $(V, F)$ is called a \emph{Minkowski space}.
\end{definition}

The fundamental tensor satisfies a few relevant properties.

\begin{proposition}
For each $v \in TM \setminus \{0\}$ we have:
\begin{enumerate}
\item $g_{\lambda v}=g_{v}$ for all $\lambda>0$;

\item $g_{v}(v,v)=F^{2}(v)$;

\item $g_{v}(v,u) = \frac{1}{2}\frac{\partial}{\partial s} F^{2}(v+su)|_{s=0}=\mathcal{L}(v)u$ 
where $\mathcal{L}: TM \setminus \{0\} \to T^{*}M \setminus \{0\}$ 
is the Legendre tranformation;

\item $g_{v}(v,u) \leq F(v)F(u)$ for all $u \in T_{\pi(v)} M$.
\end{enumerate}
\end{proposition}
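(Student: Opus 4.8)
The plan is to obtain all four items from the positive $1$-homogeneity of $F$ (equivalently, the $2$-homogeneity of $F^{2}$) by repeated use of Euler's identity, bringing in the positive-definiteness of the fundamental tensor, and the convexity it forces on $F$, only for the last estimate. For item (1), in a local trivialization of $TM$ one has $g_{ij}(v)=\tfrac12\,\partial_{y^{i}}\partial_{y^{j}}F^{2}(v)$; differentiating $F^{2}(\lambda y)=\lambda^{2}F^{2}(y)$ once in $y^{i}$ shows $\partial_{y^{i}}F^{2}$ is $1$-homogeneous, and once more in $y^{j}$ shows $\partial_{y^{i}}\partial_{y^{j}}F^{2}$ is $0$-homogeneous, whence $g_{ij}(\lambda v)=g_{ij}(v)$, i.e.\ $g_{\lambda v}=g_{v}$.

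For items (2) and (3) I would compute directly inside the defining formula, exploiting homogeneity to separate the parameters. For (3), on the region $1+t>0$ write $F^{2}(v+tv+su)=(1+t)^{2}F^{2}\!\big(v+\tfrac{s}{1+t}u\big)$; taking $\partial_{s}$ at $s=0$ gives $(1+t)\,(dF^{2})_{v}(u)$, and then $\partial_{t}$ at $t=0$ gives $(dF^{2})_{v}(u)$, so that $g_{v}(v,u)=\tfrac12 (dF^{2})_{v}(u)=\tfrac12\,\partial_{s}F^{2}(v+su)|_{s=0}$; the equality with $\mathcal{L}(v)u$ is then just the definition of the Legendre transformation $\mathcal{L}(v):=\tfrac12(dF^{2})_{v}$. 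Item (2) is the special case $u=v$ of this, or directly: $F^{2}(v+tv+sv)=(1+t+s)^{2}F^{2}(v)$ gives $g_{v}(v,v)=\tfrac12\,\partial_{t}\partial_{s}(1+t+s)^{2}F^{2}(v)\big|_{t=s=0}=F^{2}(v)$.

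For item (4) the naive approach via Cauchy--Schwarz---$g_{v}(v,u)\le F(v)\sqrt{g_{v}(u,u)}$ using (2)---does not close, since $\sqrt{g_{v}(u,u)}$ need not be bounded by $F(u)$. Instead I would use convexity: put $\varphi(s)=F(v+su)$, so that by (3) and the chain rule $\varphi'(0)=g_{v}(v,u)/F(v)$. Because the indicatrix $\{F\le1\}$ is convex, $\varphi$ is convex and hence its graph lies above its tangent line at $0$, $\varphi(s)\ge\varphi(0)+s\varphi'(0)$; on the other hand subadditivity of $F$ gives $\varphi(s)\le\varphi(0)+sF(u)$ for $s\ge0$. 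Comparing the two for $s>0$ yields $\varphi'(0)\le F(u)$, i.e.\ $g_{v}(v,u)=F(v)\varphi'(0)\le F(v)F(u)$.

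The only ingredient beyond bookkeeping with Euler's identity is the one used in (4): that the pointwise positive-definiteness of $g_{v}$ forces $F^{2}$ to be convex, hence $F$ to have a convex indicatrix and to satisfy the triangle inequality. This is the step I expect to be the main point to justify carefully (or simply to cite, e.g.\ from \cite{book-Shen-lectures}); everything else reduces to differentiating homogeneous functions.
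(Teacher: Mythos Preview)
The paper does not prove this proposition; it is stated as a standard background fact with reference to \cite{book-Shen-lectures}, so there is no ``paper's own proof'' to compare against. Your argument is correct and is exactly the kind of direct verification one expects: items (1)--(3) are immediate consequences of Euler's identity applied to the $2$-homogeneous function $F^{2}$, and your computations are clean.

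One small comment on item (4): the convexity of $\varphi(s)=F(v+su)$ is actually not needed once you invoke subadditivity. From $F(v+su)\le F(v)+sF(u)$ for $s>0$ you get directly $\tfrac{\varphi(s)-\varphi(0)}{s}\le F(u)$, and letting $s\to 0^{+}$ (using only smoothness of $F$ away from $0$) already yields $\varphi'(0)\le F(u)$. So the tangent-line lower bound is redundant, and the only substantive input---as you correctly flag---is that positive-definiteness of $g_{v}$ forces the triangle inequality for $F$, which is the standard fact to cite from \cite{book-Shen-lectures}.
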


%

We can define the \emph{length} of a smooth piecewise curve $\gamma:[a,b]\to M$ as
$l_{F}(\gamma)=\int_{a}^{b} F(\gamma'(s)) ds$.
The \emph{distance} from $p$ to $q$  can be defined as
$d(p,q)=\inf_{\gamma\in \Omega_{p,q}} l_{F}(\gamma)$,
where $\Omega_{p,q}$ is the set of curves 
$\gamma:[0,1]\to M$ joining $p=\gamma(0)$ to 
$q=\gamma(1)$. Unlike Riemannian geometry,  the distance $d(p,q)$ does not need to
be equal to the distance $d(q,p)$. 
But we can  still  have  several important metric geometric concepts from Riemannian geometry, 
as long as we take into account    the orientations of the curves involved in the definition of the distances.
For example, instead of talking about a metric ball, now we have to 
talk about \emph{future balls}, i.e. $B_{r}^{+}(p)=\{x\in M \,|\, d(p,x)<r \}$, and \emph{past balls}, i.e. $B_{r}^{-}(p)= \{x\in M \,|\, d(x,p)< r \}$, 
 
Since we have a length  functional  on the space of smooth piecewise  oriented curves, we can define a geodesic as an 
oriented curve that locally minimizes the distance. More precisely a curve $\gamma:[a,b]\to M$ is called \emph{geodesic} if
for each $s_0\in [a,b]$ there exists $\epsilon>0$ so that $d(\gamma(s_0),\gamma(s)) = \int_{s_0}^{s}F(\gamma'(t)) dt$ 
where $s\in[s_0, s_0+\epsilon].$ Just like in Riemannian geometry, geodesics can also be seen as critical points of energy functional 
$\gamma\to \int_{a}^{b} F^{2}(\gamma'(s))ds$ or as curves with zero accelerations  with respect to the (Chern) covariant derivative. But before we start to 
review the concept of Chern connection, let us end this subsection with a concrete important example.

\begin{example}[Randers metric]
Let $h$ be a Riemannian metric and $\vec{w}$ be a smooth vector field with $\|\vec{w}\|<1$, where $\|\vec{w} \|= h(\vec{w},\vec{w})^{1/2}$. 
We define the Randers metric $F$ with Zermelo data $(h,\vec{w})$ by the intrinsic equation:
$$ \|v-F(v)\vec{w} \| = F(v).$$
In other words, $\mathcal{I}_{p}^{F}(\epsilon)=\mathcal{I}_{p}^{h}(\epsilon)+ \epsilon \vec{w}(p)$ where the \emph{indicatrix} $\mathcal{I}_{p}^{F}(\epsilon)$
is defined as $\{v\in T_{p}M \,|\, F(v)=\epsilon \}.$ 
The Randers metric $F$ can also be defined as $F=\alpha+\beta$ where $\alpha$ is a Riemannian norm and $\beta$ is a 1-form so that 
$\|\beta\|_{\alpha}<1$. There is a bijection between $(h, \vec{w})$ and $(\alpha, \beta)$, but we will not need it in this paper.
\end{example}

We are interested in two properties of geodesics in Randers manifolds that we formulated as follows:


\begin{proposition}
Let $F$ be a Randers metric with Zermelo data $(h,\vec{w}),$ where $\vec{w}$ is a Killing vector field on $M$ with respect to $h$.
Let $\gamma$ be a unit speed geodesic with respect to $h$.
\begin{enumerate}
\item Then $t\to \beta(t)= e^{t\vec{w}}\circ \gamma(t)$ is a unit speed geodesic with respect to the Randers metric $F$.

\item If $\gamma$ is a unit speed geodesic starting orthogonal to a submanifold $L$ with respect to the Riemannian metric $h$, 
i.e $h(\gamma'(0),v)=0$ for all $v\in T_{\gamma(0)}L$, 
then the unit geodesic $\beta$ (with respect to $F$) is orthogonal to $L$ with respect to $g_{\beta'}$, i.e., $g_{\beta'(0)}(\beta'(0),v)=0$
for all $v\in T_{\beta(0)}L$.
\end{enumerate}
\end{proposition}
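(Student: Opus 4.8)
The plan is to reduce both items to the Zermelo navigation picture, in which the $F$-geodesics of the data $(h,\vec{w})$ with $\vec{w}$ Killing are precisely the $h$-geodesics carried along by the flow of the wind. For item (1), I would first check that $\beta$ has unit $F$-speed. Writing $\phi_t=e^{t\vec{w}}$ and differentiating $\beta(t)=\phi_t(\gamma(t))$ gives $\beta'(t)=\vec{w}(\beta(t))+(d\phi_t)_{\gamma(t)}\gamma'(t)$; since $\vec{w}$ is Killing the map $\phi_t$ is an $h$-isometry, so the second term has $h$-norm $1$, that is, $\beta'(t)-\vec{w}(\beta(t))$ is an $h$-unit vector. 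Because $\|\vec{w}\|<1$ the intrinsic navigation equation $\|v-F(v)\vec{w}\|=F(v)$ has, for each $v$, a unique positive solution $F(v)$, and it is solved here by $F(\beta'(t))=1$; hence $F\circ\beta'\equiv 1$.

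The substantive step for item (1) is showing that $\beta$ is a geodesic. I would argue on the cotangent side. The co-Finsler dual of a Randers metric with Zermelo data $(h,\vec{w})$ is $F^{*}(\xi)=\sqrt{h^{*}(\xi,\xi)}+\xi(\vec{w})$, the support function of the $h$-unit ball centered at $\vec{w}$, and the unit-speed $F$-geodesics are the base projections of the integral curves of the Hamiltonian vector field $X_{F^{*}}$ on $T^{*}M$. Write $F^{*}=\sqrt{2\widetilde{H}}+K$ with $\widetilde{H}=\tfrac12 h^{*}(\xi,\xi)$ and $K(\xi)=\xi(\vec{w})$. The Killing hypothesis says exactly that $\widetilde{H}$ is invariant under the cotangent lift of $\phi_t$, so $\{\widetilde{H},K\}=0$; hence the Hamiltonian flows of $X_{\sqrt{2\widetilde{H}}}$ and $X_{K}$ commute (away from the zero section), and $e^{tX_{F^{*}}}=e^{tX_{\sqrt{2\widetilde{H}}}}\circ e^{tX_{K}}$ there. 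Projecting to $M$, $e^{tX_{K}}$ is the lift of $\phi_t$ and $e^{tX_{\sqrt{2\widetilde{H}}}}$ is the unit-speed $h$-geodesic flow, so starting from the $h$-covector of $\gamma'(0)$ the projected curve is exactly $t\mapsto\phi_t(\gamma(t))=\beta(t)$; thus $\beta$ is a geodesic. (More elementarily, and closer to the paper's own language, one can instead verify the Chern-connection equation $D^{F}_{\beta'}\beta'=0$ by inserting the known formula for the spray of a Randers metric in terms of the Christoffel symbols of $h$ and $\vec{w}$, the Killing identity $\nabla^{h}_iw_j+\nabla^{h}_jw_i=0$ being exactly what makes the wind terms collapse onto $\phi_t$.) I expect this geodesic verification to be the only real obstacle; everything around it is formal, and the Killing condition is used precisely here.

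For item (2), I would combine item (1) with elementary convex geometry. By item (1), $\beta(0)=\gamma(0)$ and $\beta'(0)=\gamma'(0)+\vec{w}(\gamma(0))$ with $\|\gamma'(0)\|_h=1$, so $\beta'(0)$ lies on the $F$-indicatrix at $\gamma(0)$. By the third property of the fundamental tensor, $g_{\beta'(0)}(\beta'(0),\cdot)=\mathcal{L}(\beta'(0))$, which equals, up to the positive factor $F(\beta'(0))=1$, the differential $dF_{\beta'(0)}$; geometrically this covector is the conormal of the hyperplane supporting the $F$-unit ball $\{F\le 1\}\subset T_{\gamma(0)}M$ at the point $\beta'(0)$. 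But $\{F\le 1\}$ is the $h$-unit ball translated by $\vec{w}(\gamma(0))$, and translation does not change directions of supporting hyperplanes: the one supporting $\{F\le 1\}$ at $\gamma'(0)+\vec{w}(\gamma(0))$ is the translate of the one supporting the $h$-unit ball at $\gamma'(0)$, whose conormal is $h(\gamma'(0),\cdot)$. Hence $\mathcal{L}(\beta'(0))=\lambda\,h(\gamma'(0),\cdot)$ for some $\lambda>0$ (in fact $\lambda=1/(1+h(\gamma'(0),\vec{w}))$, but the value is irrelevant), and therefore $g_{\beta'(0)}(\beta'(0),v)=\lambda\,h(\gamma'(0),v)=0$ for every $v\in T_{\gamma(0)}L$, because $\gamma$ starts $h$-orthogonally to $L$.
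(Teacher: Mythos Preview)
The paper does not actually supply a proof of this proposition; it is stated without proof as a known fact about Zermelo navigation with a Killing wind, the relevant background reference being Foulon--Matveev \cite{Foulon-Matveev}. Your argument is correct and is essentially the standard symplectic proof found there: the co-Finsler function on $T^{*}M$ splits as $F^{*}=\sqrt{2\widetilde{H}}+K$ with $\widetilde{H}$ the Riemannian kinetic Hamiltonian and $K(\xi)=\xi(\vec{w})$ the fibrewise-linear function generated by $\vec{w}$; the Killing hypothesis is exactly $\{\widetilde{H},K\}=0$, so the two Hamiltonian flows commute and the $F$-geodesic flow factors as the unit-speed $h$-geodesic flow composed with the cotangent lift of $e^{t\vec{w}}$, which projects to $\beta(t)=e^{t\vec{w}}\circ\gamma(t)$. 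The unit-speed check via the navigation equation is accurate as written.

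For item (2) your convex-geometry argument is also correct and is the cleanest way to see the statement: since $g_{\beta'(0)}(\beta'(0),\cdot)=\mathcal{L}(\beta'(0))$ is the conormal to the supporting hyperplane of the $F$-indicatrix at $\beta'(0)$, and that indicatrix is just the $h$-unit sphere translated by $\vec{w}(\gamma(0))$, the supporting hyperplane at $\beta'(0)=\gamma'(0)+\vec{w}(\gamma(0))$ has the same conormal direction as the one for the $h$-unit sphere at $\gamma'(0)$, namely $h(\gamma'(0),\cdot)$. This gives $g_{\beta'(0)}(\beta'(0),v)=\lambda\,h(\gamma'(0),v)=0$ for $v\in T_{\gamma(0)}L$. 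There is nothing to compare against in the paper itself, but your write-up would serve perfectly well as the omitted proof.
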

\vspace{0.25\baselineskip}

\begin{remark}
There is also an easy way to produce 
Finsler actions on Randers spaces. 
An action $\mu:G\times M\to M$  is a Finsler action
(i.,e $F(d\mu)=F$) on Randers spaces
with Zermelo data $(h,\vec{w})$ if and only if 
the action is isometric (with respect to $h$)
and $\vec{w}$ is $G$ invariant, i.e, 
$ \vec{w}\circ\mu_{g} = d \mu_{g}\vec{w}$.
\end{remark}

\subsubsection{Chern connection and Jacobi fields}

Let us now  review the concept of  Chern connection associated with a Finsler metric $F$ 
as a family of affine connections.


\begin{proposition}
\label{proposition-chern-connection}
Given a vector field $\vec{v}$ without singularities on
an open set $U\subset M$, there exists a unique 
affine connection $\nabla^{v}$ on $U$ (the so called Chern connection) that satisfies
the following properties:
\begin{enumerate}
\item[(a)] $\nabla^{v}$ is \emph{torsion-free}, namely,
$$\nabla^{v}_{\vec{f}}\,\vec{g}-\nabla^{v}_{\vec{g}}\, \vec{f}=[\vec{f},\vec{g}]$$
for every vector fields $\vec{f}$ and $\vec{g}$ on $U$,

\item[(b)] $\nabla^{v}$ is \emph{almost g-compatible}, namely,
$$\vec{f}\cdot g_{v}(\vec{g},\vec{w})=g_{v}(\nabla^{v}_{\vec{f}}\, \vec{g},\vec{w})+g_{v}(\vec{g},\nabla^{v}_{\vec{f}}\, \vec{w})
+2 \, C_{v}(\nabla^{v}_{\vec{f}} \, \vec{v},\vec{g},\vec{w}).$$
\end{enumerate}
Here $C_v$ is the \emph{Cartan tensor} associated with the Finsler metric defined as:
\begin{eqnarray*}
C_{v}(w_{1},w_{2},w_{3}) & := & \frac{1}{2}\frac{\partial}{\partial s} g_{v+sw_{1}}(w_2,w_3)|_{s=0}\\
& = & \frac{1}{4} \frac{\partial^{3}}{\partial s_{3}\partial s_{2}\partial s_{1}} F^{2}(v+ \sum_{i=1}^{3} s_{i}w_{i})|_{s_{1}=s_{2}=s_{3}=0}
\end{eqnarray*}
for every $p \in M$, $v \in T_p M \setminus \{0\}$, and $w_{1}, w_{2}, w_{3} \in T_p M$.
\end{proposition}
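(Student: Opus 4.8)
The statement to establish is the existence and uniqueness of the Chern connection $\nabla^{v}$ attached to a nowhere‑zero vector field $\vec{v}$ on $U$. My plan is to run a Koszul‑type argument: since $\vec{v}$ is fixed and non‑singular on $U$, the fundamental tensor $g_{v}$ is an honest Riemannian metric on $U$, and I will compare $\nabla^{v}$ with its Levi‑Civita connection $\bar\nabla$.

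First I record the one algebraic fact that makes everything run smoothly. Since $g_{\lambda v}=g_{v}$ for $\lambda>0$, differentiating in $\lambda$ gives $C_{v}(v,\cdot,\cdot)=0$; as $C_{v}$ is totally symmetric (immediate from its $F^{2}$‑formula), $C_{v}$ vanishes as soon as one of its three arguments equals $\vec{v}$. This is what will let me disentangle the self‑referential term $C_{v}(\nabla^{v}_{\vec{f}}\,\vec{v},\vec{g},\vec{w})$ that appears in property (b).

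\emph{Uniqueness.} Assume $\nabla^{v}$ satisfies (a) and (b). Writing (b) for the three cyclic triples $(\vec{f},\vec{g},\vec{w})$, $(\vec{g},\vec{w},\vec{f})$, $(\vec{w},\vec{f},\vec{g})$, adding the first two and subtracting the third, and then using torsion‑freeness (a) together with the symmetries of $g_{v}$ and of $C_{v}$ exactly as in the Riemannian Koszul computation, I obtain
\[
2\,g_{v}(\nabla^{v}_{\vec{f}}\,\vec{g},\vec{w})=2\,g_{v}(\bar\nabla_{\vec{f}}\,\vec{g},\vec{w})-2\,C_{v}(\nabla^{v}_{\vec{f}}\,\vec{v},\vec{g},\vec{w})-2\,C_{v}(\nabla^{v}_{\vec{g}}\,\vec{v},\vec{w},\vec{f})+2\,C_{v}(\nabla^{v}_{\vec{w}}\,\vec{v},\vec{f},\vec{g}),
\]
where $g_{v}(\bar\nabla_{\vec{f}}\,\vec{g},\vec{w})$ is the usual Koszul expression. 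The right‑hand side still involves the unknown through the terms $\nabla^{v}_{\cdot}\,\vec{v}$, and overcoming this is the main obstacle. To do so I specialize $\vec{g}=\vec{v}$: the terms $C_{v}(\nabla^{v}_{\vec{f}}\,\vec{v},\vec{v},\vec{w})$ and $C_{v}(\nabla^{v}_{\vec{w}}\,\vec{v},\vec{f},\vec{v})$ vanish by the identity above, leaving $g_{v}(\nabla^{v}_{\vec{f}}\,\vec{v},\vec{w})=g_{v}(\bar\nabla_{\vec{f}}\,\vec{v},\vec{w})-C_{v}(\nabla^{v}_{\vec{v}}\,\vec{v},\vec{w},\vec{f})$. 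Specializing further to $\vec{f}=\vec{v}$ kills the last term and forces $\nabla^{v}_{\vec{v}}\,\vec{v}=\bar\nabla_{\vec{v}}\,\vec{v}$; substituting this back determines $\nabla^{v}_{\vec{f}}\,\vec{v}$ for every $\vec{f}$, and then the displayed formula determines $\nabla^{v}_{\vec{f}}\,\vec{g}$ for all $\vec{f},\vec{g}$. Hence $\nabla^{v}$, if it exists, is unique and has the form $\nabla^{v}=\bar\nabla+A$ with $A$ built explicitly from $\bar\nabla$ and $C_{v}$.

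\emph{Existence.} Conversely I take the formula just produced as a definition. Let $\nabla_{\vec{a}}\,\vec{v}$ be the vector field on $U$ defined by $g_{v}(\nabla_{\vec{a}}\,\vec{v},\vec{w}):=g_{v}(\bar\nabla_{\vec{a}}\,\vec{v},\vec{w})-C_{v}(\bar\nabla_{\vec{v}}\,\vec{v},\vec{w},\vec{a})$, let $A$ be the $(1,2)$‑tensor determined by $g_{v}(A(\vec{f},\vec{g}),\vec{w})=-C_{v}(\nabla_{\vec{f}}\,\vec{v},\vec{g},\vec{w})-C_{v}(\nabla_{\vec{g}}\,\vec{v},\vec{w},\vec{f})+C_{v}(\nabla_{\vec{w}}\,\vec{v},\vec{f},\vec{g})$, and set $\nabla^{v}:=\bar\nabla+A$. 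Three routine verifications finish the proof: (i) $A$ is $C^{\infty}(U)$‑bilinear, so $\nabla^{v}$ is an affine connection; (ii) $A$ is symmetric in its two entries — a short computation using only the total symmetry of $C_{v}$ — so $\nabla^{v}$ is torsion‑free, giving (a); (iii) substituting $\nabla^{v}=\bar\nabla+A$ into the right‑hand side of (b), using $g_{v}$‑compatibility of $\bar\nabla$ and once more $C_{v}(v,\cdot,\cdot)=0$ (which also identifies $\nabla^{v}_{\vec{a}}\,\vec{v}$ with the auxiliary $\nabla_{\vec{a}}\,\vec{v}$ used in the definition of $A$), reduces (b) to the defining relation for $A$. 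The mildly delicate point is the bookkeeping in (ii)–(iii), namely checking that the auxiliary expression for $\nabla_{\vec{a}}\,\vec{v}$ is genuinely consistent with $\nabla^{v}_{\vec{a}}\,\vec{v}$ once $\nabla^{v}$ is assembled; as indicated, this again follows from the vanishing of $C_{v}$ on $\vec{v}$. Finally I would remark that this exhibits the Cartan tensor as the precise obstruction to $g_{v}$‑compatibility: when $C_{v}\equiv 0$ one recovers the Levi‑Civita connection, and in general $\nabla^{v}$ depends on the reference field $\vec{v}$ only through $g_{v}$ and its first derivative along $\vec{v}$.
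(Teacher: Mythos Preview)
The paper does not actually prove this proposition: it is stated in the background section as a known characterization of the Chern connection (cf.\ the references to \cite{Alexandrino-Alves-Javaloyes-Equifocal} and \cite{book-Shen-lectures}), and the text moves on immediately to consequences such as the pointwise dependence of the Christoffel symbols and Eq.~\eqref{eq-propriedade-tensor-Cartan}. So there is nothing to compare against on the paper's side.

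Your argument is correct and is essentially the standard one. The Koszul manipulation yields the displayed identity, and the bootstrap you perform---first $\vec{g}=\vec{v}$, then $\vec{f}=\vec{v}$ to pin down $\nabla^{v}_{\vec{v}}\vec{v}=\bar\nabla_{\vec{v}}\vec{v}$, then back-substitute to get $\nabla^{v}_{\vec{f}}\vec{v}$, and finally the general $\nabla^{v}_{\vec{f}}\vec{g}$---is exactly the right way to resolve the implicit appearance of $\nabla^{v}$ inside the Cartan terms. The existence half also checks: the symmetry of your tensor $A$ follows from the total symmetry of $C_{v}$, and the consistency between the auxiliary $\nabla_{\vec{a}}\vec{v}$ and the assembled $\nabla^{v}_{\vec{a}}\vec{v}$ reduces, as you say, to $C_{v}(\cdot,\vec{v},\cdot)=0$ together with $\nabla_{\vec{v}}\vec{v}=\bar\nabla_{\vec{v}}\vec{v}$. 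One small caveat on your closing remark: saying that $\nabla^{v}$ depends on $\vec{v}$ ``only through $g_{v}$ and its first derivative along $\vec{v}$'' slightly undersells the dependence---the correction $A$ involves $\bar\nabla_{\vec{v}}\vec{v}$ and $C_{v}$, which encode first derivatives of $g_{v}$ in all directions, not just along $\vec{v}$. This does not affect the proof itself.
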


It can be checked that the Christoffel symbols of $\nabla^v$ only depend on $v=\vec{v}(p)$ at every $p\in M$, and not on the particular extension. Therefore, the Chern connection is an anisotropic  connection. 
Moreover, it   is positively homogeneous of degree zero, namely, $\nabla^{\lambda v}=\nabla^v$ for all $v\in  TM \setminus \{0\}$ and $\lambda>0$. 
One can also prove the following property of Cartan tensor:
\begin{equation}
\label{eq-propriedade-tensor-Cartan}
C_{v}(v,w_1,w_2)=C_{v}(w_1,v,w_2)=C_{v}(w_1,w_2,v)=0.
\end{equation}

Let $\gamma: I\subset \mathbb{R}\to M$  be a piecewise smooth curve 
and $t\to \vec{w}(t)$  a vector field without singularities along $\gamma$, i.e, $\vec{w}$ is a section of the pullback 
fiber bundle $\gamma^{*}(TM)$ over $I$. By considering the pullback of the Chern connection $\nabla^{w}$ we induce
the covariant derivative $\frac{\nabla^{w}}{d t}$ along $\gamma$. In particular we have that $\frac{\nabla^{w}}{d t} \, \vec{f}(t)=\nabla_{\gamma'}^{w}\, \vec{f}$
when $\vec{f}\in \mathfrak{X}(M)$.

Now we can given an equivalent definition of geodesic. A smooth curve $\gamma: I\subset \mathbb{R}\to M$ is a geodesic if and only if
$\frac{\nabla^{\gamma'}}{dt}\gamma'(t)=0$.


A  geodesic   can also been seen as the projection of an integral curve 
 of the (Finsler) geodesic spray. In other words, we have a  vector field $\vec{\mathtt{g}}$ (\emph{the Finsler geodesic spray})
on $TM-\{0\}$ so that the geodesic $\gamma$ with initial condition $\gamma'(0)=v_p\in T_{p}M$ is 
$\gamma(t)=\pi\big(e^{t \vec{\mathtt{g}} }v_p \big),$ where $\pi:TM\to M$ is the canonical projection.
We say that $(M,F)$ is a \emph{complete Finsler
manifold}, if the Finsler geodesic spray $\vec{\mathtt{g}}$ is a complete
vector field, i.e., its integral curves are defined for all  $ t\in \mathbb{R}$. 
The Finsler geodesic spray $\vec{\mathtt{g}}$ has also the interesting property that \emph{its flow preserves a volume form} 
$\omega$ (the so called volume of the \emph{Sasaki metric}) on $TM-\{0\}$, see \cite[Propositions 5.4.2, 5.4.3]{book-Shen-lectures}.


When we consider a geodesic variation $t\to\gamma_s(t)=\gamma(s,t)$  of a geodesic $\gamma$, 
then the variational vector field $J(t)=\frac{\partial}{\partial s}\gamma(0,t)$, the so called \emph{Jacobi vector field along $\gamma$}, 
is characterized by solving the differential equation
\begin{equation}\label{jacobi}
J'' (t)+ R_{\dot{\gamma}(t)}(J(t))=0.
\end{equation}
Here $J'(t)=\frac{\nabla^{\gamma'}}{d t} J$ and $R_v:T_pM\rightarrow T_pM$ is an operator well defined for each $p\in M$ and $v\in T_pM \setminus \{0\}$ called
\emph{Jacobi operator}. It can be well defined by properties of isotropic connections; see \cite[Section 5]{Alexandrino-Alves-Javaloyes-Equifocal}.

The \emph{flag curvature} for $v \in TM \setminus \{0\}$ and
$w\in T_{\pi(v)}M$ is defined in analogous way to the  sectional curvature in Riemannian case.
$$K(v,w)=\frac{g_{v}(R_{v}w,v)}{g_{v}(v,v)g_{v}(w,w)-g_{v}(v,w)^{2}}.$$

\begin{remark}
There is a natural way to produce Finsler spaces with non negative or positive flag curvature. Given a Riemannian
manifold $(M,h)$ with nonnegative or positive  sectional curvature, the Randers space $(M,F)$ with Zermelo 
data $(h,\vec{w})$ (where $\vec{w}$ is a Killing field of $(M,h)$) has nonnegative or positive flag curvature. 
\end{remark}

The next proposition provides us with a natural relationship between these well-known concepts in Riemannian geometry and their analogues in Finsler geometry.

\begin{proposition}
\label{proposition-covariantderivative-finsler}
Let $(M,F)$ be a complete  Finsler manifold and 
$\vec{v}$ be a geodesic vector field on an open subset $U \subset M$, 
let $\hat{g}:= g_{\vec{v}}$ denote the Riemannian metric on $U$ induced by the fundamental tensor $g$ 
and let $\widehat{\nabla}$ and $\widehat{R}$ be the Levi-Civita connection and the Jacobi operator of $\hat{g}$, respectively. 
Then, for any $\vec{f} \in {\mathfrak X}(U)$,
\begin{enumerate}
\item $\widehat{\nabla}_{\vec{f}}\, \vec{v}=\nabla^{v}_{\vec{f}}\, \vec{v}$ and $\widehat{\nabla}_{\vec{v}}\, \vec{f}=\nabla^{v}_{\vec{v}}\, \vec{f}$,

\item $\widehat{R}_{\vec{v}}\, \vec{f}=R_{\vec{v}}\, \vec{f}$.
\end{enumerate}
As a consequence, the integral curves of $\vec{v}$ are also geodesics of $\widehat{g}$, 
and the Finslerian Jacobi operator and Jacobi fields along the integral curves of $\vec{v}$ coincide with those of $\widehat{g}$.
\end{proposition}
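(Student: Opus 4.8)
The plan is to compare the Levi--Civita connection $\widehat\nabla$ of $\hat g=g_{\vec v}$ with the Chern connection $\nabla^{v}$ by means of the Koszul formula, exploiting that the only discrepancy between the two --- the Cartan correction term in the almost-$g$-compatibility identity of Proposition~\ref{proposition-chern-connection}(b) --- always involves either $\nabla^{v}_{\vec v}\,\vec v$, which vanishes because $\vec v$ is a geodesic vector field, or the vector $\vec v$ sitting in an entry of $C_{v}$, which dies by \eqref{eq-propriedade-tensor-Cartan}. Concretely, I would write the three cyclic instances of the almost-$g$-compatibility relation for $\nabla^{v}$, add two of them and subtract the third, and use torsion-freeness (Proposition~\ref{proposition-chern-connection}(a)) exactly as in the standard derivation of the Koszul formula. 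Since $\widehat\nabla$ satisfies the genuine (uncorrected) Koszul formula for $\hat g=g_{\vec v}$, subtracting yields, for all $\vec f,\vec g,\vec w\in\mathfrak{X}(U)$,
\[
g_{v}(\widehat\nabla_{\vec f}\,\vec g-\nabla^{v}_{\vec f}\,\vec g,\vec w)=C_{v}(\nabla^{v}_{\vec f}\,\vec v,\vec g,\vec w)+C_{v}(\nabla^{v}_{\vec g}\,\vec v,\vec f,\vec w)-C_{v}(\nabla^{v}_{\vec w}\,\vec v,\vec f,\vec g),
\]
the right-hand side collecting the three Cartan correction terms. Putting $\vec g=\vec v$: the first summand vanishes since $\vec v$ occupies an entry of $C_{v}$, the second since $\nabla^{v}_{\vec v}\,\vec v=0$, the third again since $\vec v$ occupies an entry of $C_{v}$; hence $\widehat\nabla_{\vec f}\,\vec v=\nabla^{v}_{\vec f}\,\vec v$. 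Putting instead $\vec f=\vec v$: the first summand vanishes since $\nabla^{v}_{\vec v}\,\vec v=0$, the other two since $\vec v$ occupies an entry of $C_{v}$; hence $\widehat\nabla_{\vec v}\,\vec g=\nabla^{v}_{\vec v}\,\vec g$. This proves (1).

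From (1) with $\vec f=\vec v$ we get $\widehat\nabla_{\vec v}\,\vec v=\nabla^{v}_{\vec v}\,\vec v=0$, so the integral curves of $\vec v$ are $\hat g$-geodesics, and, again by (1), the covariant derivative $\frac{\nabla^{\gamma'}}{dt}$ along such a curve $\gamma$ agrees with the $\hat g$-covariant derivative $\frac{\widehat\nabla}{dt}$ (extending a field along $\gamma$ locally and applying the second identity of (1)). For (2) I would use the description of the Finslerian Jacobi operator of a geodesic vector field in terms of the curvature of the Chern connection, as in \cite[Section~5]{Alexandrino-Alves-Javaloyes-Equifocal}, namely $R_{\vec v}\,\vec f=R^{\nabla^{v}}(\vec f,\vec v)\,\vec v$ with $R^{\nabla^{v}}$ the curvature of the affine connection $\nabla^{v}$. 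Since $\nabla^{v}_{\vec v}\,\vec v=0$ the term $\nabla^{v}_{\vec f}\nabla^{v}_{\vec v}\,\vec v$ drops out and $R_{\vec v}\,\vec f=-\nabla^{v}_{\vec v}\nabla^{v}_{\vec f}\,\vec v-\nabla^{v}_{[\vec f,\vec v]}\,\vec v$; by (1) every connection appearing here may be replaced by $\widehat\nabla$ --- the outer one because it differentiates in the $\vec v$-direction, the inner ones because they act on $\vec v$ --- and, using $\widehat\nabla_{\vec v}\,\vec v=0$ once more, the result is $R^{\widehat\nabla}(\vec f,\vec v)\,\vec v=\widehat R_{\vec v}\,\vec f$, which is (2). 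Finally, with (1) and (2) in hand, the two instances of the Jacobi equation \eqref{jacobi} along an integral curve of $\vec v$ --- one for $F$, one for $\hat g$ --- are literally the same linear ODE, so their solution spaces, i.e.\ the respective Jacobi fields, coincide, which is the remaining part of the consequence.

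I expect the main obstacle to be step (2): one must be careful about which piece of the Chern connection's curvature enters the Finslerian Jacobi operator and check that, restricted to a geodesic vector field $\vec v$, it really reduces to the affine curvature endomorphism $R^{\nabla^{v}}(\cdot,\vec v)\,\vec v$; granting that, (1) makes everything go through formally. Part (1) itself is essentially bookkeeping --- the only point to monitor is that all three Cartan correction terms in the modified Koszul identity are annihilated, which is precisely why the hypotheses ``$\vec v$ geodesic'' and \eqref{eq-propriedade-tensor-Cartan} are the right ones.
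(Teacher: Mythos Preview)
The paper does not supply its own proof of this proposition; it is stated as a background fact in Section~\ref{section-background} and implicitly deferred to the references (in particular \cite{Alexandrino-Alves-Javaloyes-Equifocal}). There is therefore nothing in the paper to compare against.

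Your argument is correct and is essentially the standard one. The Koszul-formula computation for part~(1) is clean: the difference identity
\[
g_{v}(\widehat\nabla_{\vec f}\,\vec g-\nabla^{v}_{\vec f}\,\vec g,\vec w)=C_{v}(\nabla^{v}_{\vec f}\,\vec v,\vec g,\vec w)+C_{v}(\nabla^{v}_{\vec g}\,\vec v,\vec f,\vec w)-C_{v}(\nabla^{v}_{\vec w}\,\vec v,\vec f,\vec g)
\]
is exactly what falls out when you run the usual cyclic manipulation using torsion-freeness and almost-$g$-compatibility, and your case analysis (one Cartan term dies via $\nabla^{v}_{\vec v}\vec v=0$, the others via \eqref{eq-propriedade-tensor-Cartan}) is accurate. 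For part~(2), the point you flag is indeed the only delicate one: the Finslerian Jacobi operator is in general defined through the anisotropic Chern curvature, which has an additional ``$P$-type'' piece coming from the dependence of $\nabla^{v}$ on the reference direction. When $\vec v$ is a geodesic vector field these extra terms vanish and one is left with the curvature of the fixed affine connection $\nabla^{v}$, i.e.\ $R_{\vec v}\vec f=R^{\nabla^{v}}(\vec f,\vec v)\vec v$; this is precisely what is recorded in \cite[Section~5]{Alexandrino-Alves-Javaloyes-Equifocal}, so your citation is appropriate and the remainder of the argument (replacing each $\nabla^{v}$ by $\widehat\nabla$ via (1)) goes through.
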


We finish this subsection by recalling the concept of $L$-Jacobi fields. 

\begin{definition}
\label{definition-jacobi-field}
Let $L$ be a submanifold of a complete Finsler manifold $(M,F)$ and $\gamma:[a,b]\to M$ a unit speed geodesic orthogonal
to $L$ at $p=\gamma(a)$. We say that a Jacobi field $J$ is a $L$-Jacobi field if
\begin{itemize}
\item $J(a)$ is tangent to $L$;

\item $\mathcal{S}_{\gamma'(a)}J(a)= \mathrm{tan}_{\gamma'(a)}J'(a)$ where
$\mathcal{S}_{\gamma'}:T_{p}L\to T_{p}L$
is the \emph{shape operator} defined as
$\mathcal{S}_{\gamma'}(u) = \mathrm{tan}_{\gamma'(a)}\nabla_{u}^{\gamma'(a)}\xi$
with $\xi$ an orthogonal vector field along $L$ such that
$\xi_{p}=\gamma'(a)$ and $\mathrm{tan}_{\gamma'(a)}$
is the $g_{\gamma'(a)}$-orthogonal projection onto $T_{p}L$.
\end{itemize}
\end{definition}

\begin{remark}
\label{remark-L-jacobi}
As proved in   \cite[Proposition 3.5]{Alexandrino-Alves-Javaloyes-Equifocal}, 
a Jacobi field $J$ along $\gamma$ is a $L$-Jacobi field
if and only if it is variation vector field of
a variation of $L$ orthogonal geodesics.
\end{remark}

\subsubsection{Finsler submersion}

\begin{definition}
A submersion $\rho:(M,F)\rightarrow (B,F^{\star})$  between Finsler manifolds is a \emph{Finsler submersion} if 
$d\rho_p(B^F_p(0,1))=B^{F^{\star}}_{\rho(p)}(0,1),$ for every $p\in M$, where $B^F_p(0,1)$ and $B^{ F^{\star}}_{\rho(p)}(0,1)$ 
are the unit balls of the Minkowski spaces $(T_pM,F_p)$ and $(T_{\rho(p)}B, F^{\star}_{\rho (p)})$ centered at $0$, respectively.
\end{definition}

The first natural example is to consider a Finsler action $\mu:G\times M\to M$ (i.e, $F (d \mu_g)= F$) 
where all orbits have the same isotropy type, i.e, the isotropy groups $G_p=\{g \in G \,|\, \mu(g,p)=p \}$ are conjugate to each other. 
Then the projection $\rho:(M,F)\to (M/G,F^{\star})$ is a Finsler submersion where $F^{\star}$ is the induced Finsler norm on $B=M/G$.

It is also useful to construct Finsler submersions in Randers spaces starting with a Riemannian submersion.

\begin{lemma}
\label{lemma-ex-finsler-submersion}
Let $\rho:(M,h)\to (B,h^{\star})$ be a Riemannian
submersion, $\vec{w}^{\star}$ a vector field on $B$ 
and $\vec{w}$ a vector field in $M$ that is $\rho$-related to $\vec{w}^{\star}$, i.e., $d \rho \circ \vec{w}= \vec{w}^{\star}\circ \rho$.
Then $\rho:(M, R)\to (B, R^{\star})$ is
a Finsler submersion, where  $R$ is the Randers metric with Zermelo data $(h,\vec{w})$ 
and $R^{\star}$ is the Randers metric with Zermelo data $(h^{\star}, \vec{w}^{*})$.
\end{lemma}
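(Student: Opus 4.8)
The plan is to prove Lemma \ref{lemma-ex-finsler-submersion} by checking the defining condition of a Finsler submersion pointwise, using the explicit description of the unit balls of a Randers metric in terms of its Zermelo data. Fix $p\in M$ and write $q=\rho(p)$. Recall that the indicatrix of the Randers metric $R$ with Zermelo data $(h,\vec w)$ is $\mathcal I^{R}_p = \mathcal I^{h}_p + \vec w(p)$, i.e.\ the unit sphere of $h$ translated by the vector $\vec w(p)$; consequently the closed unit ball of $R$ at $p$ is $\overline{B^{R}_p(0,1)} = \overline{B^{h}_p(0,1)} + \vec w(p)$, the closed $h$-unit ball translated by $\vec w(p)$. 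The same statement holds downstairs for $R^{\star}$ with Zermelo data $(h^{\star},\vec w^{\star})$: $\overline{B^{R^{\star}}_q(0,1)} = \overline{B^{h^{\star}}_q(0,1)} + \vec w^{\star}(q)$.

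With these descriptions in hand the computation is essentially linear algebra. Since $\rho:(M,h)\to(B,h^{\star})$ is a Riemannian submersion, $d\rho_p$ maps the closed $h$-unit ball of $T_pM$ \emph{onto} the closed $h^{\star}$-unit ball of $T_qB$; indeed, $d\rho_p$ restricted to the horizontal space is a linear isometry onto $T_qB$, so it already carries the horizontal $h$-unit ball onto the $h^{\star}$-unit ball, and adding the (nonnegative-radius) vertical contributions only keeps the image inside that ball. Thus $d\rho_p\big(\overline{B^{h}_p(0,1)}\big) = \overline{B^{h^{\star}}_q(0,1)}$. Now I would simply apply $d\rho_p$ to the translated ball: using linearity of $d\rho_p$ and the hypothesis that $\vec w$ is $\rho$-related to $\vec w^{\star}$, i.e.\ $d\rho_p\big(\vec w(p)\big) = \vec w^{\star}(q)$, one gets
\[
d\rho_p\big(\overline{B^{R}_p(0,1)}\big) = d\rho_p\big(\overline{B^{h}_p(0,1)}\big) + d\rho_p\big(\vec w(p)\big) = \overline{B^{h^{\star}}_q(0,1)} + \vec w^{\star}(q) = \overline{B^{R^{\star}}_q(0,1)}.
\]
Passing to the interiors (the image of an open ball under the surjective linear map $d\rho_p$ is open, and the translations are homeomorphisms) yields $d\rho_p\big(B^{R}_p(0,1)\big) = B^{R^{\star}}_q(0,1)$, which is exactly the defining property of a Finsler submersion. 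It remains only to note that $R$ and $R^{\star}$ are genuine Randers (hence Finsler) metrics: this requires $\|\vec w\|_h<1$ on $M$ and $\|\vec w^{\star}\|_{h^{\star}}<1$ on $B$, which one should either assume as part of the hypotheses or deduce from $d\rho$ being norm-nonincreasing on horizontal vectors together with $\vec w$ being $\rho$-related to $\vec w^{\star}$ (the latter gives $\|\vec w^{\star}\|_{h^{\star}} \le \|\vec w\|_h$ after decomposing $\vec w$ into horizontal and vertical parts).

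The step I expect to be the main (if modest) obstacle is justifying carefully that a Riemannian submersion sends the full unit ball onto the full unit ball, rather than merely the horizontal unit sphere onto the unit sphere: one must observe that for any $v\in T_qB$ with $h^{\star}(v,v)\le 1$ its horizontal lift $\tilde v$ satisfies $h(\tilde v,\tilde v)=h^{\star}(v,v)\le 1$ and $d\rho_p(\tilde v)=v$, giving surjectivity, while for any $u\in T_pM$ with $h(u,u)\le 1$ the decomposition $u=u^{\mathcal H}+u^{\mathcal V}$ gives $h^{\star}(d\rho_p u, d\rho_p u) = h(u^{\mathcal H},u^{\mathcal H}) \le h(u,u)\le 1$, giving the inclusion the other way. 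Everything else is formal manipulation of translations and linear maps, so the proof is short.
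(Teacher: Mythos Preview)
The paper states this lemma without proof, treating it as an elementary observation. Your argument is correct and is exactly the natural one: the paper itself records the key fact $\mathcal{I}_{p}^{R}(\epsilon)=\mathcal{I}_{p}^{h}(\epsilon)+\epsilon\,\vec{w}(p)$ for a Randers metric with Zermelo data $(h,\vec{w})$, so the open Randers unit ball at $p$ is the open $h$-unit ball translated by $\vec{w}(p)$; combining this with the standard fact that a Riemannian submersion maps unit balls onto unit balls (which you justify carefully via the horizontal/vertical decomposition) and the $\rho$-relatedness $d\rho_p(\vec{w}(p))=\vec{w}^{\star}(\rho(p))$ gives the Finsler-submersion condition in one line. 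Your remark that one needs $\|\vec{w}\|_{h}<1$ (and hence $\|\vec{w}^{\star}\|_{h^{\star}}<1$) for $R$ and $R^{\star}$ to be genuine Randers metrics is also appropriate, since the paper leaves this implicit.
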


Given the Finsler foliation $\F=\{L \}$ with leaves $L=\rho^{-1}(c)$, 
we say that a geodesic $\gamma:I\subset \RR\rightarrow M$ is \emph{horizontal} if  for each $t\in I$ the vector 
$\gamma'(t)$  is an orthogonal to  the leaves $L\in\mathcal{F}$, i.e., 
$g_{\gamma'(t)}(\gamma'(t),w)=0$ for all $w\in  T_{\gamma(t)}L$.


\begin{center}
\begin{figure}[tbp]
	\includegraphics[scale=0.4]{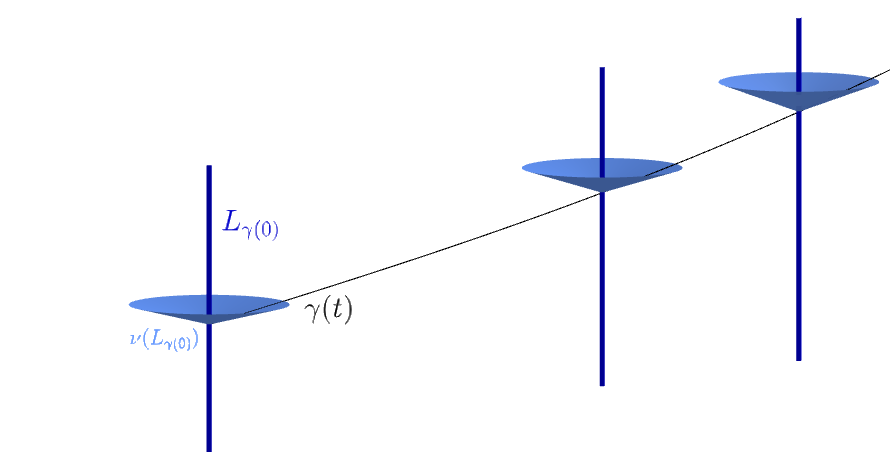}
	\caption{Figure generated by the 
software geogebra.org  illustrating  Lemma \ref{lemma-ex-finsler-submersion}, i.e., a Randers submersion that was produced
	starting with the trivial Riemannian submersion $\rho:\mathbb{R}^{3}\to\mathbb{R}^{2}$ defined as 
	$\rho(x)=(x_1,x_2)$ and wind $W=(0,0,\frac{1}{4}\sin^{2}(x_1)+\frac{1}{4})$. The horizontal unit speed 
		geodesic is $\gamma(t)=(t,0,\frac{3t}{8}-\frac{\sin(2t)}{16}).$ Note that the union of normal vectors to a  tangent space of the fibers 
	is a (normal) cone, and no longer a normal subspaces as it was in the Riemannian case. As remarked the 
geodesics of a Finsler submersion are orthogonal to the leaves and hence tangent to the \emph{normal cones} \textcolor{Blue}{$\nu(L)$}. }
	\label{figura-submersaoRanders}
\end{figure}
\end{center}

In the same way as in Riemannian geometry, in Finsler geometry we have the lift property of  geodesics.

\begin{proposition}
\label{proposition-duran-geodesics-submersion}
Let $\pi:(M,F)\to (B, F^{\star})$ be a Finsler submersion. 
Then an immersed curve on $B$ is a geodesic if and only if
its horizontal lifts are geodesics on $M$. In particular, the geodesics of $(B, F^{\star})$ are precisely the projections of horizontal
geodesics of $(M,F)$.
\end{proposition}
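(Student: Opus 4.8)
The plan is to prove Proposition \ref{proposition-duran-geodesics-submersion} by reducing the Finsler statement to the corresponding Riemannian statement via the device in Proposition \ref{proposition-covariantderivative-finsler}. First I would fix a horizontal geodesic $\gamma$ in $M$, set $v=\gamma'(0)$, and choose a geodesic vector field $\vec{v}$ extending $\dot\gamma$ on a neighborhood $U$ of (a piece of) $\gamma$; then $\hat g := g_{\vec v}$ is a genuine Riemannian metric on $U$, and by Proposition \ref{proposition-covariantderivative-finsler} the integral curves of $\vec v$ — in particular $\gamma$ itself — are $\hat g$-geodesics with the same Jacobi operator. The first key step is to check that the Finsler submersion $\rho$ is, along $\gamma$, infinitesimally a Riemannian submersion for $\hat g$: the condition $d\rho_p(B^F_p(0,1))=B^{F^\star}_{\rho(p)}(0,1)$ says $d\rho_p$ maps the $F_p$-unit ball onto the $F^\star_{\rho(p)}$-unit ball, and when restricted to the level of the fundamental tensors at the relevant horizontal direction this translates into $d\rho_p$ being a linear isometry from $(\ker d\rho_p)^{\perp_{\hat g}}$ onto the tangent space of $B$ equipped with the Minkowski fundamental tensor $g^\star_{d\rho(v)}$; here one must be a little careful that "horizontal" was defined with respect to $g_{\gamma'(t)}$, which is exactly the $\hat g$-orthogonal complement of the fibers along $\gamma$.

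Next I would invoke the classical Riemannian O'Neill / submersion theory for $\hat g$: a curve upstairs that is horizontal and an $\hat g$-geodesic projects to an $\hat g^\star$-geodesic, and conversely horizontal lifts of $\hat g^\star$-geodesics are $\hat g$-geodesics — at least along the relevant arc, which is all we need since geodesics are a local notion. Combining this with Proposition \ref{proposition-covariantderivative-finsler} on both ends (upstairs $\hat g$-geodesic $\Leftrightarrow$ $F$-geodesic in the horizontal direction, and downstairs $\hat g^\star$-geodesic $\Leftrightarrow$ $F^\star$-geodesic) yields the equivalence "horizontal lift of a $B$-geodesic is an $M$-geodesic." For the converse direction — an immersed curve $c$ on $B$ whose horizontal lift $\gamma$ is a geodesic must itself be a geodesic — I would run the same identification: $\dot\gamma$ being $F$-geodesic makes it $\hat g$-geodesic, its projection $c=\rho\circ\gamma$ is then an $\hat g^\star$-geodesic by Riemannian submersion theory, hence $c$ is an $F^\star$-geodesic. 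The last sentence of the statement ("the geodesics of $B$ are precisely the projections of horizontal geodesics of $M$") is then immediate, using also that every $F$-geodesic is either horizontal or not, and that a non-horizontal geodesic does not project to an immersed curve with the right speed — but in fact we only claim the characterization for horizontal ones, so nothing extra is needed.

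The main obstacle, and the only genuinely Finsler-specific point, is the passage in the first paragraph: one has to verify that the extra terms distinguishing the Chern connection from a Levi-Civita connection (the Cartan-tensor term in part (b) of Proposition \ref{proposition-chern-connection}) do not spoil the submersion identities. This is exactly where Proposition \ref{proposition-covariantderivative-finsler} does the work — it guarantees $\widehat\nabla_{\vec f}\,\vec v = \nabla^v_{\vec f}\,\vec v$ and $\widehat\nabla_{\vec v}\,\vec f=\nabla^v_{\vec v}\,\vec f$, i.e. the horizontal-geodesic equation and the behavior of horizontal lifts (which only involve covariant derivatives in the direction of, or of, the geodesic field $\vec v$) are unaffected — but one should state clearly that all the submersion computations we borrow only ever differentiate in the $\vec v$ direction or differentiate $\vec v$ itself, so that the Cartan correction term $C_v(\nabla^v_{\vec f}\vec v,\cdot,\cdot)$ either vanishes by \eqref{eq-propriedade-tensor-Cartan} or is exactly matched on the Riemannian side. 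A secondary technical point is keeping track of orientations: since $F$ is only positively homogeneous, the lift must be chosen so that $d\rho(\gamma'(t))$ has the correct (forward) direction, but this is automatic because $d\rho_p$ restricted to the horizontal cone is orientation-preserving onto $T_{\rho(p)}B$. I would assemble these observations into a short argument, citing \cite{Alvarez-Duran} for the original Finsler submersion geodesic-lift computation and using Proposition \ref{proposition-covariantderivative-finsler} to localize everything to the auxiliary Riemannian metric.
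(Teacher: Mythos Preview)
The paper does not prove this proposition: it is stated without proof in the background section as a known result, attributed (via the section's citations) to \'Alvarez~Paiva and Dur\'an \cite{Alvarez-Duran}. There is therefore no ``paper's own proof'' to compare your proposal against.

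That said, your strategy is sound and in fact anticipates the unnumbered proposition immediately following this one in the paper, which asserts exactly the intermediate fact you need: that for a geodesic field $v^\star$ on $B$ with horizontal lift $\vec v$, the map $\rho|_U:(U,g^F_{\vec v})\to(U^\star,g^{F^\star}_{v^\star})$ is a Riemannian submersion. Once that is in hand, the geodesic correspondence follows from the classical Riemannian statement together with Proposition~\ref{proposition-covariantderivative-finsler}, just as you outline. One organizational caution: you set things up by starting from a horizontal geodesic $\gamma$ upstairs and extending $\dot\gamma$ to a geodesic field, but for the direction ``$c$ geodesic on $B$ $\Rightarrow$ horizontal lift is geodesic on $M$'' it is cleaner to build the geodesic field on $B$ first and then lift, since a priori you do not yet know the lift is a geodesic. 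This is not a real gap, only a matter of ordering the argument so as to avoid apparent circularity.

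For comparison, the original argument in \cite{Alvarez-Duran} is metric/variational rather than connection-based: it uses the defining unit-ball condition to show that $\rho$ is nonexpanding and that horizontal lifts preserve length, so local length-minimizers correspond under $\rho$. Your connection-theoretic route via Proposition~\ref{proposition-covariantderivative-finsler} is a legitimate alternative and fits well with the tools the paper has already set up.
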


Once we fix a geodesic vector field, we can reduce the study of Finsler submersions to Riemannian submersions.

\begin{proposition}
Let $\pi:(M,F)\to (B, F^{\star})$ be a Finsler submersion. 
Let $v^\star$ be a geodesic vector field in some open subset $U^{\star}$ of $B$. 
Then the horizontal lift $\vec{v}$ of $v^*$ is a geodesic vector field on $U=\rho^{-1}(U^{\star})$ 
and the restriction $\rho|_U:(U,g^F_{\vec{v}})\rightarrow ( U^{\star},g^{F^\star}_{v^*})$ is a Riemannian submersion, 
where $g^F$ and $g^{F^\star}$ are the fundamental tensors of $F$ and $ F^\star$, respectively.
\end{proposition}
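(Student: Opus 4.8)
The plan is to prove the two assertions in turn. For the first, I would use that the horizontal lift $\vec{v}$ is $\rho$-related to $v^\star$, so the flow of $\vec{v}$ covers the flow of $v^\star$: if $c$ is an integral curve of $\vec{v}$, then $\rho\circ c$ is an integral curve of $v^\star$ and hence a geodesic of $(B,F^\star)$, while $c$ is one of its horizontal lifts, since each velocity $\vec{v}(c(t))$ is a horizontal vector by the very definition of horizontal lift. Proposition \ref{proposition-duran-geodesics-submersion} then gives that $c$ is a geodesic of $(M,F)$, so $\vec{v}$ is a geodesic vector field on $U$; in particular $g^F_{\vec v}$ is a genuine Riemannian metric on $U$ and $g^{F^\star}_{v^\star}$ one on $U^\star$ (cf. Proposition \ref{proposition-covariantderivative-finsler}).

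The second assertion is pointwise. Fix $p\in U$, set $q=\rho(p)$, $v=\vec v(p)$, $v^\star=d\rho_p(v)$, and let $\mathcal H_p$ be the $g_v$-orthogonal complement of $\ker d\rho_p=T_pL$. It has dimension $\dim B$ and contains $v$, precisely because $v$, being a horizontal lift, minimizes $F$ over its fiber $d\rho_p^{-1}(v^\star)$ and therefore satisfies $g_v(v,T_pL)=0$. The heart of the matter is to show that $d\rho_p\colon(\mathcal H_p,g_v)\to(T_qB,g^{F^\star}_{v^\star})$ is a linear isometry (this is essentially the linear algebra of Minkowski submersions from \cite{Alvarez-Duran}). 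I would argue as follows: consider the smooth local section $\phi$ of $\rho$ near $q$ that assigns to each $\xi^\star\in T_qB$ near $v^\star$ the $F$-minimizer of $\xi^\star$ over its fiber, so that $F^\star(\xi^\star)=F(\phi(\xi^\star))$, $\phi(v^\star)=v$, and $g_{\phi(\xi^\star)}(\phi(\xi^\star),u)=0$ for all $u\in T_pL$. Differentiating $d\rho\circ\phi=\mathrm{id}$ shows $d\phi_{v^\star}$ is a right inverse of $d\rho_p$; differentiating the orthogonality relation and using the Cartan-tensor identity \eqref{eq-propriedade-tensor-Cartan} (which kills every term $C_v(\,\cdot\,,v,\,\cdot\,)$) shows $d\phi_{v^\star}(T_qB)\subset\mathcal H_p$. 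Hence $d\phi_{v^\star}$ is exactly the inverse of $d\rho_p|_{\mathcal H_p}$. Then, differentiating the identity $F^{\star2}(v^\star+ta^\star+sb^\star)=F^2\big(\phi(v^\star+ta^\star+sb^\star)\big)$ in $t,s$ and evaluating at $0$, the mixed second derivative yields $g^{F^\star}_{v^\star}(a^\star,b^\star)$ on the left and $g_v(d\phi_{v^\star}a^\star,d\phi_{v^\star}b^\star)$ on the right, the two extra terms vanishing because $C_v(\,\cdot\,,v,\,\cdot\,)=0$ and because $\partial_t\partial_s\phi\in T_pL$ is $g_v$-orthogonal to $v$. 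This is exactly the assertion that $d\rho_p|_{\mathcal H_p}$ is an isometry.

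Finally I would assemble: $\rho|_U$ is a submersion as the restriction of one to an open set, and at each $p\in U$ the $g^F_{\vec v}$-horizontal space is carried by $d\rho_p$ isometrically onto $(T_{\rho(p)}B,g^{F^\star}_{v^\star})$, which is the definition of a Riemannian submersion. I expect the main obstacle to be the pointwise isometry claim in the middle paragraph; the subtlety to keep in mind there is that the relevant horizontal space is the $g_v$-orthogonal complement of the fiber (which genuinely depends on the chosen geodesic vector field $\vec v$) and that $v$ itself lies in it, after which the computation is just bookkeeping with the Legendre transform and the Cartan tensor.
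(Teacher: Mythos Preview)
The paper does not actually supply a proof of this proposition: it is stated in the background section as a known fact, with the underlying linear algebra attributed to \'Alvarez Paiva--Dur\'an \cite{Alvarez-Duran} and the global formulation to \cite{Alexandrino-Alves-Javaloyes-Equifocal}. So there is nothing to compare against, and the question is simply whether your argument is correct. It is.

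Your first step is exactly right: integral curves of $\vec v$ project to integral curves of $v^\star$, which are geodesics of $(B,F^\star)$, and since they are horizontal by construction Proposition~\ref{proposition-duran-geodesics-submersion} makes them geodesics of $(M,F)$.

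For the second step, your computation is the standard one and it goes through. A couple of points worth tightening when you write it up. First, the map $\phi$ is a local section of the linear map $d\rho_p$, not of $\rho$; its smoothness follows from the implicit function theorem applied to the Lagrange condition $g_{\phi(\xi^\star)}(\phi(\xi^\star),u)=0$, using that the fundamental tensor is nondegenerate. Second, the key vanishing facts in the mixed second derivative are precisely the two you name: the Cartan term $C_v(\partial_s\phi,v,\partial_t\phi)$ dies by \eqref{eq-propriedade-tensor-Cartan}, and the term $g_v(v,\partial_t\partial_s\phi)$ dies because $d\rho_p(\partial_t\partial_s\phi)=\partial_t\partial_s(v^\star+ta^\star+sb^\star)=0$ forces $\partial_t\partial_s\phi\in\ker d\rho_p$, which is $g_v$-orthogonal to $v$. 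This is exactly the Minkowski-space lemma of \cite{Alvarez-Duran}, and your globalization via the geodesic vector field is the intended one.
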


We finish this subsection by presenting two simple examples illustrating how compactness hypothesis of Theorem \ref{theorem-dualizacao} is important.

\begin{example}
\label{example-submersion-non-compact}
In this example we present an attainable set and orbit of the set 
of  horizontal unit geodesic vector fields of a Finsler homogenous analytical submersion on a non compact space, see Figure \ref{figura-Alcancavel-nao-compacto}.
Consider the Riemannian submersion $\rho: (\mathbb{R}^{2},h_{2}) \to (\mathbb{R},h_{1})$ where 
$\rho(x_1,x_2)=x_1$ and $h_{n}$ is the Euclidean metric on $\mathbb{R}^{n}$.
By Lemma \ref{lemma-ex-finsler-submersion}, taking $\vec{w}=(0,\frac{1}{2})$,  we have that 
$\rho: (\mathbb{R}^{2},R) \to (\mathbb{R}, h_1)$ is a Finsler submersion 
where $R$ is the Randers metric with respect to the Zermelo data $(h_2,\vec{w})$. 
Let $\mathcal{C} = \{\vec{f}_{1}, \vec{f}_{2} \}$ be the set of vector fields where $\vec{f}_{1} = (1,\frac{1}{2})$ and $\vec{f}_{2}=(-1,\frac{1}{2})$. 
The integral curves of these vector fields are horizontal geodesics of the Finsler submersion 
$\rho: (\mathbb{R}^{2}, R) \to (\mathbb{R}, h_1)$.
It is easy to see that $\mathcal{A}_{(0,0)}(\mathcal{C})$ is a cone with its interior. 
More precisely $\mathcal{A}_{(0,0)}(\mathcal{C}) = \{x \in \mathbb{R}^{2} \,|\, \frac{1}{2} x_{1} \leq x_{2}, 0 \leq x_1 \} \cup \{x \in \mathbb{R}^{2} \,|\, -\frac{1}{2} x_{1} \leq x_{2}, x_{1} \leq 0 \}$.
Also it is clear that $\mathcal{O}\big((0, 0)\big) = \mathbb{R}^{2}$. In particular $\mathcal{O}\big((0,0)\big) \neq \mathcal{A}_{(0,0)}(\mathcal{C})$.
\end{example}


\begin{center}
\begin{figure}[tbp]
	\includegraphics[scale=0.4]{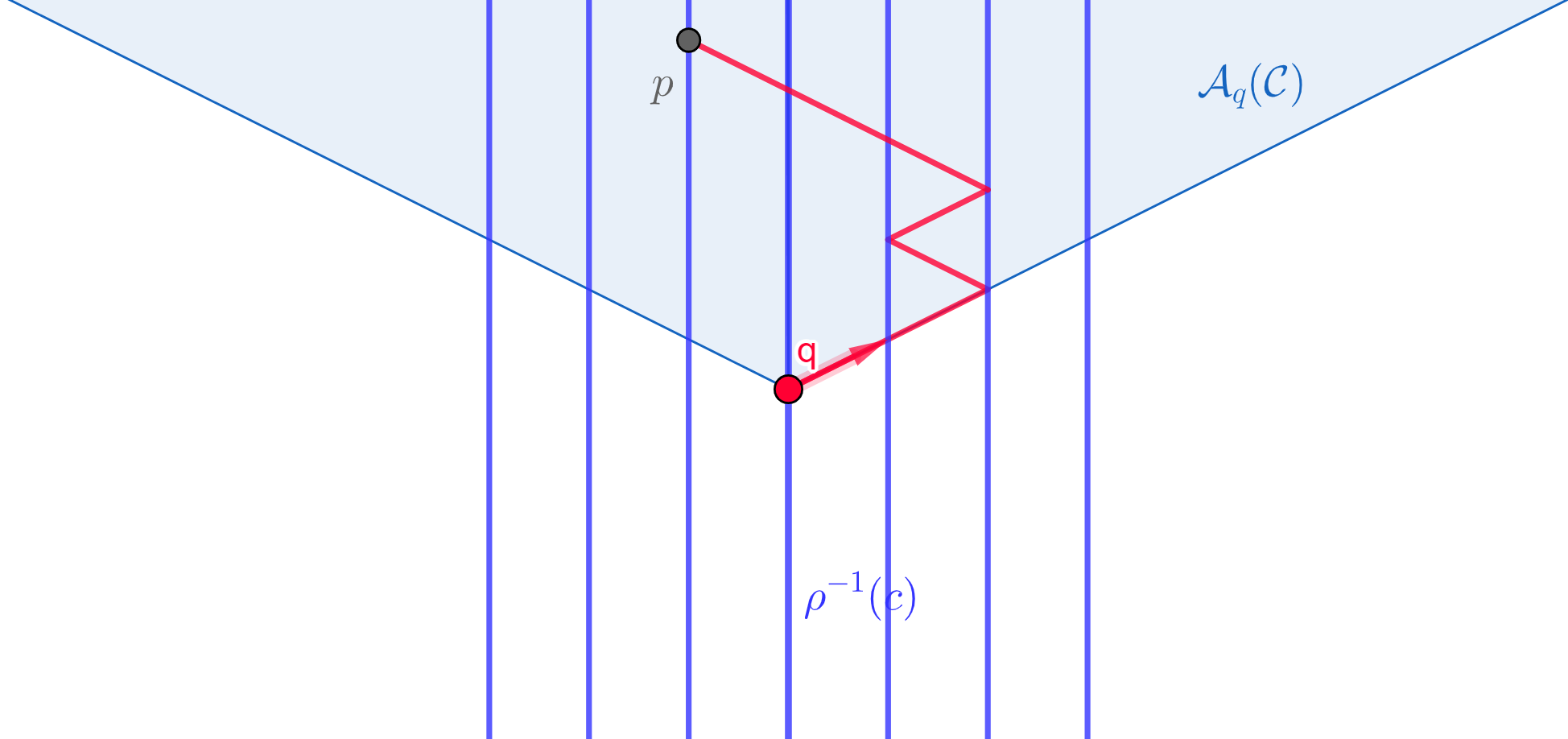}
	\caption{Figure generated by the 
software geogebra.org  illustrating Example \ref{example-submersion-non-compact}}
	\label{figura-Alcancavel-nao-compacto}
\end{figure}
\end{center}


\begin{example}
\label{example-submersion-compact}
Let $\pi: \mathbb{R}^{2} \to \mathbb{T}^{2}=\mathbb{R}^{2}/(\mathbb{Z}\times\mathbb{Z})$ be the canonical projection of the Euclidean plane onto the torus 
and the Finsler submersion $\rho: (\mathbb{R}^{2},R) \to (\mathbb{R}, h_1)$ defined in Example \ref{example-submersion-non-compact}. 
Since the fibers of the submersion $\rho$ and the vector field $\vec{w}$ are invariant by the action of $\mathbb{Z}\times\mathbb{Z}$, 
the Finsler submersion $\rho$ projects to a Finsler submersion $\rho^{\star}:(\mathbb{T}^{2}, R^{\star})\to (\mathbb{S}^{1},h_1)$. 
Here  $R^{\star}$ is the Randers metric with Zermelo data $(h_2,w^{\star})$ where the $h_2$ is flat metric and the wind $w^{\star}$ is $\rho$-related with $\vec{w}$, 
i.e., $w^{\star}\circ\pi=d \pi \circ \vec{w}$. Define $f_{i}^{\star}$ to be $\pi$-related to $\vec{f}_{i}$. 
The integral curves of $f_{i}^{\star}$ are horizontal geodesics of $\rho^{\star}$. 
Set $\mathcal{C}^{\star}=\{f_{1}^{\star}, f_{2}^{\star}  \}$ and $p^{\star}=\pi( (0,0))$. 
Then it is not difficult to check that $\mathcal{A}_{p^{\star}}(\mathcal{C}^{\star})=\mathcal{O}(p^{\star})$.

\end{example}

\vspace{\baselineskip}

\section{Proof of item (a) of Theorem \ref{theorem-dualizacao}}
\label{section-proof-Theorem-a}

Let $N \subset T^{1}M$ be the union of unit cone bundle of the fibers of $\rho$, i.e., $N := \cup_{x\in M}\nu_{x}^{1}(L_x)$ for $L_x = \rho^{-1}(\rho(x)).$ It follows from Alvarez Paiva and Duran \cite{Alvarez-Duran} that $N$ is a compact embedded submanifold of the unit bundle $T^{1}M$ and that the diagram below comutes

\vspace{0.5\baselineskip}
\begin{center}
\begin{tikzcd}[column sep={2.5cm}, row sep={1.5cm}]
N \arrow[swap]{d}{\pi_M} \arrow{r}{\rho_N} & T^1 B \arrow{d}{\pi_B} \\
M \arrow[swap]{r}{\rho} & B
\end{tikzcd}
\end{center}
\vspace{0.5\baselineskip}
where $\rho_{N} = d\rho |_N$ and $\pi_M$ and $\pi_B$ are the canonical projections.
Also note that $N$ is invariant by the geodesic flow $e^{t\vec{\mathtt{g}}}$ and 
\begin{equation}
\label{eq-comutation-flowgeodesic}
\rho_{N}\circ e^{t\vec{\mathtt{g}}} =e^{t\vec{\mathtt{b}}}\circ\rho_{N}
\end{equation}
where $e^{t\vec{\mathtt{b}}}$ is the geodesic flow in $T^{1}B$.

\begin{remark}
Note that the action
$\mu:G\times M\to M$ induces
an action $\tilde{\mu}:G\times N\to N$
as $\tilde{\mu}_{g}=(\mu_{g})_{*}$
and the orbits of the action induces
the leaves of the foliation 
$\widetilde{\F}:=\{\widetilde{L} \}$
where $\widetilde{L}=\rho_{N}^{-1}(c).$
\end{remark}

\begin{lemma}
\label{lemma-poisson-stable}
The Finsler  geodesic spray $\vec{\mathtt{g}}$ restricted to $N$ is Poisson stable.
\end{lemma}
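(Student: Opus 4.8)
The plan is to derive Poisson stability of $\vec{\mathtt{g}}|_{N}$ from Poincar\'{e}'s recurrence theorem (Proposition~\ref{proposition-prop-815-Poincare}), so the whole problem reduces to producing on the compact manifold $N$ a finite measure of full support invariant under the flow $e^{t\vec{\mathtt{g}}}$. Recall that, by \cite{Alvarez-Duran}, $N$ is a compact embedded submanifold of $T^{1}M$ invariant under $e^{t\vec{\mathtt{g}}}$, and that the flow is complete on $N$ because $M$ is compact, hence complete.

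The first ingredient is the Sasaki volume of the base: applying to $(B,F^{\star})$ the fact that the Finsler geodesic spray preserves the Sasaki volume on $TB\setminus\{0\}$, and observing that $F^{\star}$ is a first integral of $\vec{\mathtt{b}}$, one obtains on the compact manifold $T^{1}B$ a finite volume form $\omega_{B}$ of full support preserved by the geodesic flow $e^{t\vec{\mathtt{b}}}$. The second ingredient is an equivariance: since each $\mu_{g}$ is a Finsler isometry it preserves lengths of curves, hence maps unit-speed geodesics to unit-speed geodesics preserving the parameter, so $d\mu_{g}$ commutes with the geodesic spray on $TM\setminus\{0\}$; therefore $\tilde{\mu}_{g}=(\mu_{g})_{\ast}$ commutes with $e^{t\vec{\mathtt{g}}}$ on $T^{1}M$, and, as $\rho\circ\mu_{g}=\rho$ forces $\mu_{g}$ to preserve $\F$ and hence $N$, we obtain $\tilde{\mu}_{g}\circ e^{t\vec{\mathtt{g}}}|_{N}=e^{t\vec{\mathtt{g}}}|_{N}\circ\tilde{\mu}_{g}$. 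Combining this with \eqref{eq-comutation-flowgeodesic}, the flow $e^{t\vec{\mathtt{g}}}$ carries each leaf $\widetilde{L}_{c}=\rho_{N}^{-1}(c)$ diffeomorphically and $G$-equivariantly onto the leaf $\widetilde{L}_{e^{t\vec{\mathtt{b}}}(c)}$.

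Next I would build the invariant measure by disintegration along $\rho_{N}$. Each leaf $\widetilde{L}_{c}$ is an orbit of the compact group $G$, hence a compact homogeneous space carrying a unique $G$-invariant probability measure $\nu_{c}$; since a $G$-equivariant diffeomorphism between $G$-orbits pushes the invariant probability measure to the invariant probability measure, the previous step gives $(e^{t\vec{\mathtt{g}}})_{\ast}\nu_{c}=\nu_{e^{t\vec{\mathtt{b}}}(c)}$. I then set $\nu:=\int_{T^{1}B}\nu_{c}\,d\omega_{B}(c)$, a finite Borel measure on $N$; a short change-of-variables computation using $(e^{t\vec{\mathtt{b}}})_{\ast}\omega_{B}=\omega_{B}$ and the leafwise identity above yields $(e^{t\vec{\mathtt{g}}})_{\ast}\nu=\nu$, while $\nu$ has full support because $\rho_{N}$ is open, $\omega_{B}$ has full support, and an invariant measure on a homogeneous space is positive on nonempty open sets. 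Proposition~\ref{proposition-prop-815-Poincare} then gives that $\vec{\mathtt{g}}|_{N}$ is Poisson stable. If one prefers an honest volume form, one replaces $\nu_{c}$ by the $G$-invariant volume form of total mass $1$ on $\widetilde{L}_{c}$ coming from a fixed $G$-invariant Riemannian metric on $N$ and wedges the horizontal extension of these leafwise forms with $\rho_{N}^{\ast}\omega_{B}$; the same equivariance, together with continuity in $t$ and the value at $t=0$, shows the resulting volume form is preserved.

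The step that I expect to require the most care is checking that this glued object is genuinely flow-invariant with full support: since $e^{t\vec{\mathtt{g}}}$ moves points between different leaves, one cannot argue leafwise in isolation, and it is exactly the $G$-equivariance of the leaf-to-leaf maps together with the invariance of $\omega_{B}$ under $e^{t\vec{\mathtt{b}}}$ that makes the two pieces compatible; moreover, using the \emph{normalized} (probability) measures on the leaves is what removes any dependence on how the intrinsic volumes of the leaves vary with $c$.
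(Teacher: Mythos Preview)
Your proposal is correct and follows the same architecture as the paper: build a flow-invariant volume on $N$ as a product of the Sasaki volume $\omega_{B}$ on $T^{1}B$ with a $G$-invariant leafwise measure along the fibers of $\rho_{N}$, then apply Poincar\'{e} (Proposition~\ref{proposition-prop-815-Poincare}). The only substantive difference lies in how the leafwise piece is shown to be carried correctly by $e^{t\vec{\mathtt{g}}}$. The paper constructs an explicit metric $\tilde{g}$ on the orbits via the Lie-algebra splitting $\mathfrak{g}=V_{x}\oplus\mathfrak{g}_{x}$, checks that the end-point map $\eta_{\xi}$ is an isometry for $\tilde{g}$, lifts this to $N$, and then wedges the resulting $\omega_{G}$ with $\rho_{N}^{\ast}\omega_{B}$. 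Your argument bypasses this construction entirely: you observe that $e^{t\vec{\mathtt{g}}}$ is $G$-equivariant (because $\mu_{g}$ is a Finsler isometry) and that the leaves $\widetilde{L}_{c}$ are compact $G$-orbits, so uniqueness of the $G$-invariant probability on a homogeneous space forces $(e^{t\vec{\mathtt{g}}})_{\ast}\nu_{c}=\nu_{e^{t\vec{\mathtt{b}}}(c)}$ automatically. This is more conceptual and avoids the somewhat underexplained step in the paper where the invariance of $\omega_{G}$ under the lifted flow is asserted; normalizing to probability measures, as you note, also sidesteps any issue about how the total leaf volume varies with $c$. Conversely, the paper's explicit metric gives an honest smooth volume form from the start, matching literally the hypothesis of Proposition~\ref{proposition-prop-815-Poincare}, whereas your primary construction is measure-theoretic and you have to appeal to the general Poincar\'{e} recurrence theorem (or your alternative paragraph) to close that gap.
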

\begin{proof}

In order to prove that the flow is Poisson stable, it suffices to check that
\begin{equation}
\label{Eq-1-lemma-poisson-stable}
|e^{t\vec{\mathtt{g}}}(W)|= |W|, \quad \forall t>0,
\end{equation}
where $W$ is any given proper relative compact neighborhood and $| \cdot |$ is a fixed volume on $N$ that will be constructed below,  recall Proposition \ref{proposition-prop-815-Poincare}.

The first step in our construction is to define a metric on the fibers of $\rho:M\to B=M/G$ so 
 that the end point map $\eta_{\xi}:G(x) \to G(y)$, defined as $\eta_{\xi}(x)=\exp_{x}(\xi)$, turns to be an isometry. 

Since all orbits are principal, 
the slice theorem implies that 
the map $ \Psi: x\to \mathfrak{g}_x\subset Gk(\mathfrak{g}) $ is smooth (where $Gk(\mathfrak{g})$ is the Grassmannian of $\mathfrak{g}$). 
For a given metric $\langle \cdot, \cdot \rangle $ on $\mathfrak{g},$ we can find a  subspace $V_x$
orthogonal to $\mathfrak{g}_x$, i.e., $\mathfrak{g}=V_x\oplus \mathfrak{g}_x$. 
Now we define the metric $\tilde{g}$ along the orbits  that
transform the isomorphism  $d\mu_{x}:(V_x, \langle \cdot, \cdot \rangle )  \to 
(T_{x}G(x),\tilde{g}_{x})$ into an isometry.

Since the isotropy groups along (minimal) horizontal geodesics are the same, 
we have that $V_x =V_y$, where $y=\eta_{\xi}(x)$. 
Note that $d \eta_{\xi}\vec{v}(x)=\vec{v}(y)$ 
where $\vec{v}$ is the vector field along the orbits defined as 
$\vec{v}(x)=d\mu_x v$ for $v\in V=V_x=V_y$. 
These facts allow us to conclude that the map $\eta_{\xi}:G(x) \to G(y)$ is an isometry.

Now we can define a volume form $\omega_G$ (with respect to the metric $\tilde{g}$) along the fibers of $\rho:M\to B=M/G$. 
Note that this form is invariant by the end point map, i.e., $\eta_{\xi}^{*}\omega_{G}=\omega_G$. 
The metric $\tilde{g}$ as well the volume form $\omega_G$ 
can also be defined on the fibers $\{\widetilde{L}\}$ of $\rho_{N}:N\to T^{1}B$ and we will use the same notation. 
Note that if $e^{t\vec{\mathtt{g}}}:\widetilde{L}_x\to \widetilde{L}_y$, 
then $(e^{t\vec{\mathtt{g}}})^{*}\omega_{G}=\omega_{G}$.
 By another abuse of notation, consider $\omega_{G}$ an extension of the previous $\omega_{G}$ to a $k$-form in $N$.

We can define the volume form as $\omega=\omega_{G}\wedge \rho_{N}^{*}\omega_B$ 
where $\omega_B$ is the volume form with respect to the Sasaki metric of $T^{1}B$. 
Recall that $e^{t \vec{\mathtt{b}}}$ preserves the volume form for the Sasaki metric of $T^{1}B$. 
Therefore we conclude that
\begin{equation}
\label{Eq-2-A-lemma-poisson-stable}
(e^{t\vec{\mathtt{g}}})^{*}\omega=\omega.
\end{equation}

Defining $|W|=\int_{W} \omega$, we conclude that Eq.\eqref{Eq-2-A-lemma-poisson-stable} implies Eq. \eqref{Eq-1-lemma-poisson-stable} 
and this conclude the proof of the lemma.
\end{proof}

\begin{remark}
\label{remark-preserving-volume-submanifold}
Note that the fact that  Finsler geodesic spray  preserves volume  on $TM$  does not directly imply that
its restriction to a submanifold of $TM$ preserves volume of this submanifold. This is one
of the reasons why we are assuming that the submersion $\rho:M\to B$ is homogenous. 
\end{remark}

Now we are going to define a set of vector fields $\widetilde{\mathcal{C}}$ on $N$
that will be relates to our original set of vector fields $\mathcal{C}$ on $M$ as follows:
\begin{enumerate}
\item[(i)] $\pi_{M}\big(\widetilde{\mathcal{A}}_{\tilde{q}}(\widetilde{\mathcal{C}})\big)= \mathcal{A}_{q}(\mathcal{C})$, 
\item[(ii)] $\pi_{M}\big(\widetilde{\mathcal{O}}(\tilde{q})\big)= \mathcal{O}(q)$, 
\end{enumerate}
where $\pi_{M}(\tilde{q})=q$. Therefore, once we have proved that 
$\widetilde{\mathcal{A}}_{\tilde{q}}(\mathcal{C})= \widetilde{\mathcal{O}}(\tilde{q})$ (see Eq. \eqref{eq-fim-prova-itema} below)
we will be able to conclude that $\mathcal{A}_{q}(\mathcal{C})= \mathcal{O}(q)$ and hence to finish the proof 
of item (a) of  Theorem \ref{theorem-dualizacao}.

Consider a set of  vector fields $\mathcal{C}_1=\{\vec{f}_u\}$ with the following properties: 
 $\vec{f}_u $ are tangent to the fibers of $\pi_{M}:N\to M$,  $\mathcal{C}_1$ is symmetric
(i.e. if $\vec{f}_u\in\mathcal{C}_1$ then $-\vec{f}_{u}\in\mathcal{C}_1$) 
and $\vec{f}_u $  are $\mu_{g}$-related, i.e., $\vec{f}_{u}\circ\tilde{\mu}_{g}= d\tilde{\mu}_{g} \vec{f}_{u}$. 
Now we complete $\mathcal{C}_1$ with the  geodesic spray (restricted to $N$),
i.e., $\widetilde{\mathcal{C}}=\{\vec{\mathtt{g}} \}\cup \mathcal{C}_1$. Note that the projection  of 
 the integral curves of $\vec{f}_{0}=\vec{\mathtt{g}}$ 
corresponds to the horizontal geodesics and the projection of  the integral curves of $\vec{f}_u$ ($u\neq 0$) measure how it breaks, and in particular  
$\pi_{M}(\gamma_{2}*\delta_{u}*\gamma_{1})$ is a broken horizontal unit speed  geodesic, where 
$\gamma_{i}$ is a integral line of $\vec{\mathtt{g}}$ and $\delta_{u}$ is an integral line of $\vec{f}_{u}.$
The attainable set  and the orbit of the family
$\widetilde{\mathcal{C}}$
through $\tilde{q}_0$ are denoted by 
$\widetilde{\mathcal{O}}(\tilde{q}_{0})$ and
$\widetilde{\mathcal{A}}_{\tilde{q}_{0}}(\widetilde{\mathcal{C}}).$
Using this one can check properties (i) and (ii) stated above.

\begin{lemma}
\label{lemma-poisson-stable-na-orbita}
Consider $\widetilde{\mathcal{O}}(\tilde{q})$ the orbit of the family $\widetilde{\mathcal{C}}$ through $\tilde{q}$ defined above.
Then
\begin{enumerate}
\item[(a)] Each orbit $\widetilde{\mathcal{O}}(\tilde{q})$ meet all the fibers of $\rho_{N}$.
\item[(b)] The leaves of the foliation of orbits $\{\widetilde{\mathcal{O}}(\tilde{q}) \}$ has the same dimension (i.e, $\{\widetilde{\mathcal{O}}(\tilde{q})\}$
is a regular foliation). 
\item[(c)] If the orbits $\mathcal{O}(q) $ in $M$ are embedded then the orbits $ \widetilde{\mathcal{O}}(\tilde{q}) $ in $N$ are embedded as well.
\item[(d)] If the orbits $\widetilde{\mathcal{O}}(\tilde{q}) $ in $N$ are embedded, then $\vec{\mathtt{g}}$ restricts to each orbit is Poisson stable.
\end{enumerate}
\end{lemma}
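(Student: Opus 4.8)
The plan is to prove the four items of Lemma \ref{lemma-poisson-stable-na-orbita} more or less in the stated order, since (c) uses (b) and (d) uses (c) together with Lemma \ref{lemma-poisson-stable}.

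\medskip

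For item (a), I would argue as follows. Start with a point $\tilde{q}$ in a fiber $\widetilde{L}_x = \rho_N^{-1}(c)$ over $c\in T^1B$. The geodesic spray $\vec{\mathtt{g}}\in\widetilde{\mathcal{C}}$, and by Eq.~\eqref{eq-comutation-flowgeodesic} the flow $e^{t\vec{\mathtt{g}}}$ projects under $\rho_N$ to the geodesic flow $e^{t\vec{\mathtt{b}}}$ on $T^1B$. So following an integral curve of $\vec{\mathtt{g}}$ moves $\tilde{q}$ along the $\rho_N$-fibers over a whole geodesic of $B$; following the $\vec{f}_u\in\mathcal{C}_1$ (which are tangent to the $\pi_M$-fibers, hence — one checks — contained in single $\rho_N$-fibers) lets us switch between horizontal geodesics at a common base point. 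Concatenating, the base-point footprints of the geodesics we can traverse form exactly the orbit of the geodesic flow on $T^1B$ under switching directions at common points; since $B$ is connected (it is a quotient of the connected $M$) and the unit-speed geodesics through any point cover all directions, any two points of $T^1B$ are joined, so $\widetilde{\mathcal{O}}(\tilde{q})$ surjects onto $T^1B$ under $\rho_N$, i.e.\ it meets every fiber.

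\medskip

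For item (b), fix two points $\tilde{q}_1,\tilde{q}_2$ of $N$; by (a) there is an element $g$ of the pseudogroup generated by the flows of $\widetilde{\mathcal{C}}$ carrying $\tilde{q}_1$ into the same $\rho_N$-fiber $\widetilde{L}$ as $\tilde{q}_2$. Within that fiber, the structure is homogeneous: the $G$-action $\tilde\mu$ and the symmetric, $\tilde\mu_g$-related family $\mathcal{C}_1$ act transitively enough — more precisely, the orbit of $\widetilde{\mathcal{C}}$ is invariant under $d g$ of the flows, and since $dg$ is a diffeomorphism, $\dim T_{\tilde{q}_1}\widetilde{\mathcal{O}}(\tilde{q}_1)=\dim T_{g(\tilde{q}_1)}\widetilde{\mathcal{O}}(g(\tilde{q}_1))$. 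Because $g(\tilde{q}_1)$ and $\tilde{q}_2$ lie in the same fiber and the fibers are orbits of $\tilde\mu$ with $\mathcal{C}_1$ being $\tilde\mu_g$-related, the orbit dimension is constant along each fiber. Combining, the orbit dimension is constant on all of $N$, so $\{\widetilde{\mathcal{O}}(\tilde{q})\}$ is a regular foliation. Item (c) is then a transfer statement: the projection $\pi_M:N\to M$ is a proper submersion with compact ($G$-orbit) fibers, it maps each $\widetilde{\mathcal{O}}(\tilde{q})$ onto $\mathcal{O}(q)$ by property (ii), and the $\pi_M$-fibers are transverse to (indeed partly tangent to, via $\mathcal{C}_1$) the foliation; a standard argument shows that if the image $\mathcal{O}(q)$ is embedded and the leaf $\widetilde{\mathcal{O}}(\tilde{q})$ has constant dimension (from (b)), then $\widetilde{\mathcal{O}}(\tilde{q})=\pi_M^{-1}(\mathcal{O}(q))\cap(\text{leaf})$ is embedded.

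\medskip

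For item (d), assume the orbits $\widetilde{\mathcal{O}}(\tilde{q})$ are embedded. An embedded orbit of a regular foliation that is invariant under the relevant flows is an embedded submanifold of $N$, and it is closed in $N$ because $N$ is compact and the orbit is a leaf of a regular foliation whose closure would otherwise contain extra leaves violating constant dimension — more carefully, one uses that an embedded leaf of a regular foliation on a compact manifold that is a single orbit of an analytic family is closed, hence compact. The geodesic spray $\vec{\mathtt{g}}$ is tangent to $\widetilde{\mathcal{O}}(\tilde{q})$ (it lies in $\widetilde{\mathcal{C}}$), so it restricts to a complete vector field on the compact manifold $\widetilde{\mathcal{O}}(\tilde{q})$. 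By Lemma \ref{lemma-poisson-stable}, $\vec{\mathtt{g}}$ on $N$ preserves the volume form $\omega=\omega_G\wedge\rho_N^*\omega_B$; restricting the construction to the submanifold $\widetilde{\mathcal{O}}(\tilde{q})$ — which by (a) meets every $\rho_N$-fiber and by (b)+(c) is a nice subbundle-type object over $T^1B$ — one gets an induced invariant volume form on the orbit (here the homogeneity, i.e.\ that $\omega_G$ comes from the $G$-action and the orbit is built from $G$-invariant data, is exactly what lets the volume restrict, cf.\ Remark \ref{remark-preserving-volume-submanifold}). Then Poincaré's recurrence Proposition \ref{proposition-prop-815-Poincare} applied to the compact manifold $\widetilde{\mathcal{O}}(\tilde{q})$ with this invariant volume gives that $\vec{\mathtt{g}}|_{\widetilde{\mathcal{O}}(\tilde{q})}$ is Poisson stable.

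\medskip

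\textbf{Main obstacle.} I expect the delicate point to be item (d) — specifically, arguing that the volume form $\omega$ on $N$ restricts to a flow-invariant volume on the embedded orbit $\widetilde{\mathcal{O}}(\tilde{q})$. A submanifold invariant under a volume-preserving flow need not inherit an invariant volume (precisely the warning in Remark \ref{remark-preserving-volume-submanifold}), so this must be done by hand: one shows the orbit fibers over $T^1B$ via $\rho_N$ with fibers that are unions of $G$-orbits (or single $G$-orbits), the factor $\omega_G$ genuinely restricts because it was built $G$-equivariantly and is preserved by the end-point maps $\eta_\xi$, and $\rho_N^*\omega_B$ restricts because $e^{t\vec{\mathtt{b}}}$ preserves $\omega_B$. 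Getting the product structure and the compatibility of the two factors on the orbit precise is the real work; the rest (a)–(c) are comparatively routine orbit/foliation bookkeeping plus the projection $\pi_M$.
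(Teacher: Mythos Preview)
Your treatment of items (a)--(c) is essentially the paper's argument, though for (c) you should make explicit the identity $\widetilde{\mathcal{O}}(u_p)=\pi_M^{-1}(\mathcal{O}(p))$ (the paper proves both inclusions, the nontrivial one by explicitly lifting a broken horizontal curve, including reversed segments, to a concatenation of integral curves of $\widetilde{\mathcal{C}}$); your phrase ``$\pi_M^{-1}(\mathcal{O}(q))\cap(\text{leaf})$'' is unclear and the ``standard argument'' is really this equality together with transversality of $\pi_M$.

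Item (d) is where there is a genuine gap, and the paper's approach is quite different from yours. You try to (i) argue the embedded orbit is closed hence compact, and (ii) restrict the invariant volume $\omega=\omega_G\wedge\rho_N^*\omega_B$ to it and apply Poincar\'e there. Step (i) is not justified: an embedded leaf of a regular foliation on a compact manifold need not be closed, and nothing in the hypotheses forces $\mathcal{O}(q)$ (hence $\pi_M^{-1}(\mathcal{O}(q))$) to be compact. Step (ii) is precisely the difficulty flagged in Remark~\ref{remark-preserving-volume-submanifold}: the intersection of $\widetilde{\mathcal{O}}(\tilde q)$ with a $\rho_N$-fiber need not be a full $G$-orbit, so $\omega_G$ does not obviously restrict to an invariant top form on the orbit's fibers, and your sketch does not close this gap.

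The paper sidesteps both problems entirely. It never restricts the volume and never uses compactness of the orbit. Instead it uses that $\tilde{\mu}_g$ commutes with the geodesic flow $e^{t\vec{\mathtt{g}}}$ and permutes the orbits $\widetilde{\mathcal{O}}$. Given $\tilde q_0$ and a small neighborhood $V_0$ in which $V_0\cap\widetilde{\mathcal{O}}(\tilde q_0)$ is connected (this is where embeddedness is used), one chooses a smaller $V_1\subset V_0$ and a neighborhood $W\ni e$ in $G$ so that $\tilde\mu_{g^{\pm1}}(V_1)\subset V_0$ for $g\in W$ and every $\tilde y\in V_1$ is of the form $\tilde\mu_g(\tilde x)$ with $\tilde x\in\widetilde{\mathcal{O}}(\tilde q_0)\cap V_1$ and $g\in W$. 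Lemma~\ref{lemma-poisson-stable} (Poisson stability on all of $N$) gives a point $\tilde y\in V_1$ with $e^{t_1\vec{\mathtt{g}}}(\tilde y)\in V_1$ for some $t_1>t_0$; writing $\tilde y=\tilde\mu_g(\tilde x)$ and using commutativity yields $e^{t_1\vec{\mathtt{g}}}(\tilde x)=\tilde\mu_{g^{-1}}\big(e^{t_1\vec{\mathtt{g}}}(\tilde y)\big)\in V_0\cap\widetilde{\mathcal{O}}(\tilde q_0)$. Thus recurrence on $N$ is transferred to the orbit via the $G$-action, with no volume restriction and no compactness of the orbit required.
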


\begin{proof} \quad

\emph{Item (a)} Fix a point $\tilde{q}_{0}\in N$ and consider  $\tilde{q}_{1}\in N.$ 
Let $\gamma_{B}:[0,r]\to B$ be a  piecewise broken unit geodesic 
 so that $\gamma_{B}'(0) = \rho_{N}(\tilde{q}_{0})$ and $\gamma_{B}'(r) = \rho_{N}(\tilde{q}_{1}).$
Let  $\gamma_N:[0,r]\to N$ be the lift of $\gamma_{B}$ with $\gamma_{N}'(0) = \widetilde{q}_{0}.$
Set $\gamma_{N}'(r) = \tilde{q}_{2}.$ Note that $\rho_{N} (\tilde{q}_{2})= \rho_{N} (\tilde{q}_{1})$ and hence that
$\widetilde{q}_{2}\in \widetilde{L}_{\tilde{q}_{1}}$. 
Since $\gamma_N \subset \widetilde{\mathcal{O}}(\tilde{q}_0)$, 
we have concluded that 
$\tilde{q}_2 \in \widetilde{\mathcal{O}}(\tilde{q}_0) \cap \widetilde{L}_{\tilde{q}_1}$ 
and this finishes the proof of item (a).

\vspace{0.25\baselineskip}

\emph{Item (b)} Recall that the action $\mu:G\times M\to M$ induces an action $\tilde{\mu}: G\times TM \to TM$
as $\tilde{\mu}_{g}=(\mu_{g})_{*}$ and the orbits of the action $\tilde{\mu}$ induces the leaves of $\widetilde{\mathcal{F}}.$ 
For a fix  leaf $\widetilde{L}_{\tilde{q}}$, note that 
$\widetilde{\mu}_{g} (\widetilde{\mathcal{O}}(\tilde{q})) = \widetilde{\mathcal{O}}(\widetilde{\mu}_{g} (\tilde{q})).$
Since $\widetilde{\mu}_{g}$ is a diffeomorphism, the orbits  that meet $\widetilde{L}_{\tilde{q}}$ 
 have the same dimension. On the other hand,
it follows from item (a) that all orbits meet  $\widetilde{\mathcal{F}}$. Therefore all orbits have the same dimension.

\vspace{0.25\baselineskip}



 \emph{ Item (c)} 
Since $\pi_M$ is a submersion (consequently transverse to $\mathcal{O}(p)$) it suffices to prove that 
$\pi_{M}^{-1}(\mathcal{O}(p)) = \widetilde{\mathcal{O}}(u_p)$, for all $u_p \in N$. 
The inclusion $\pi_{M}^{-1}(\mathcal{O}(p)) \supset \widetilde{\mathcal{O}}(u_p)$ follows immediately from the fact that 
$\pi_{M} (\widetilde{\mathcal{O}}(u_p)) = \mathcal{O}(p)$.

In order to prove that $\pi_{M}^{-1}(\mathcal{O}(p) ) \subset \widetilde{\mathcal{O}}(u_p)$ consider $v_q \in \pi_{M}^{-1}(\mathcal{O}(p))$. 
Then $q = \pi_{M} (v_q) \in \mathcal{O}(p)$ which means that $p$ and $q$ are linked by a broken curve 
$\gamma = \gamma_{n} \ast \cdots \ast \gamma_{1}$
 where each $\gamma_i$ segment is either 
unit horizontal geodesic or a reverse of a unit horizontal geodesic.


 Set   $\widetilde{\gamma}_{i}(t) :=  (\gamma_{i}'(t))_{\gamma_{i}(t)}\in T_{\gamma_{i}(t)}M$
if $\gamma_i$   a unit segment of geodesic and set $\widetilde{\gamma}_{i}(t) :=   (-\gamma_{i}'(t) )_{\gamma_{i}(t)}\in T_{\gamma_{i}(t)}M$
if $\gamma_i$ is a reverse segment of geodesic. Let $\delta_1$ be an integral line of $\mathcal{C}_1$ connecting
 $u_p$ with $\widetilde{\gamma}_{1}(0),$ $\delta_i$ be an integral line of $\mathcal{C}_1$ joining 
$\widetilde{\gamma}_{i-1}(r_{i-1})$ with $\widetilde{\gamma}_{i}(r_{i})$ and 
$\delta_{n+1}$ be an integral line of $\mathcal{C}_1$ joining $\widetilde{\gamma}_{n}(r_{n})$ with
 $v_q.$ Then 
$\delta_{n+1} \ast \widetilde{\gamma}_{n} \ast \delta_{n}\ast  \widetilde{\gamma}_{n-1} \ast \delta_{n-1} \ast \cdots \ast \widetilde{\gamma}_1 \ast \delta_1$
is a broken curve connecting $u_p$ with $v_q$, where each segment is either a integral line of $\widetilde{\mathcal{C}}$ or a reverse of a integral line of 
$\widetilde{\mathcal{C}}$ which means that $v_q \in \widetilde{\mathcal{O}}(u_p)$.

\vspace{0.25\baselineskip}

\emph{Item (d)} For a fixed $\tilde{q}_0\in N$ consider a small relatively compact trivial $\widetilde{\F}$-neighborhood $V_0$ of $\tilde{q}_0$ so that $V_{0}\cap \widetilde{\mathcal{O}}(\tilde{q}_0)$ has only one connected component. We want to check that for each $t_0$ there exists $t_{1}>t_0$ and a point $\tilde{x}\in V_{0}\cap \widetilde{\mathcal{O}}(\tilde{q}_0)$ so that $\widetilde{x}_{1} = e^{t_{1}\vec{\mathtt{g}}}(\tilde{x})\in V_{0}\cap \widetilde{\mathcal{O}}(\tilde{q}_0)$.

We claim that there exists a relatively compact  neighborhood $V_{1}\subset V_{0}$ and a neighborhood $W$ of $e\in G$ so that:
\begin{itemize}
\item if $g\in W$ then $g^{-1}\in W$ and   $\mu_{g^{-1}}(V_{1})\subset V_{0}$ and $\mu_{g}(V_{1})\subset V_{0}$;
\item if $\tilde{y}\in V_1$ then there exists $\tilde{x}\in \widetilde{\mathcal{O}}(\tilde{q}_0)\cap V_{1}$ so that $\mu_{g}(\tilde{x})=\tilde{y}$, with $g\in W$.
\end{itemize}
By Lemma \ref{lemma-poisson-stable}, for each $t_0,$ there exists $\tilde{y}\in V_{1}$ and $t_{1}>t_0$ so that $e^{t_{1}\vec{\mathtt{g}}}(\tilde{y}) = \tilde{y}_{1}\in V_{1}$. Consider $\tilde{x} \in  V_{1}\cap \widetilde{\mathcal{O}}(\tilde{q}_0)$ so that $\tilde{y}=\mu_{g}(\tilde{x})$. Since $\mu_{g}\circ e^{t_{1}\vec{\mathtt{g}}}= e^{t_{1}\vec{\mathtt{g}}}\circ\mu_{g},$ we have $\tilde{y}_{1}=e^{t_{1}\vec{\mathtt{g}}}\big(\mu_{g}(\tilde{x})\big)=\mu_{g}\big(e^{t_{1}\vec{\mathtt{g}}}(\tilde{x})\big)$ and hence $\tilde{x}_{1}:=\mu_{g^{-1}}(\tilde{y}_{1}) = e^{t_{1}\vec{\mathtt{g}}}(\tilde{x})$. Once $\mu_{g^{-1}}(V_{1})\subset V_0$ and $V_{0}\cap \widetilde{\mathcal{O}}(\tilde{q}_0)$ has only one connected component, we infer that $\tilde{x}_{1}\in V_{0}\cap \widetilde{\mathcal{O}}(\tilde{q}_0)$
as we wanted to prove.
\end{proof}

We can now end the proof of item (a) of Theorem \ref{theorem-dualizacao}. 
Item (d) of Lemma \ref{lemma-poisson-stable-na-orbita}
implies that $\vec{\mathtt{g}}$ restricts to each orbit is Poisson stable. 
From Proposition \ref{proposition-prop-814-Agrachev} 
$-\vec{\mathtt{g}}$ is compatible with $\widetilde{\mathcal{C}}$, 
i.e. $\widetilde{\mathcal{A}}_{\tilde{q}}(\widetilde{\mathcal{C}})$ is dense in 
$\widetilde{\mathcal{A}}_{\tilde{q}}(\widetilde{\mathcal{C}}\cap \{- \vec{\mathtt{g}}\})= \widetilde{\mathcal{O}}(\tilde{q})$. 
Therefore, it follows from Proposition \ref{proposition-cor-83-Agrachev} that
\begin{equation}
\label{eq-fim-prova-itema}
\widetilde{\mathcal{A}}_{\tilde{q}}(\widetilde{\mathcal{C}}) = \widetilde{\mathcal{O}}(\tilde{q}).
\end{equation}
The above equation finishes the proof as remarked before.

\vspace{\baselineskip}

\section{Proof of Proposition \ref{proposition-Orbit-positive} and item (b) of Theorem \ref{theorem-dualizacao}  }
\label{section-prof-main-proposition}

Let us start by recalling the definition of singular Finsler foliation (SFF for short),  a class of singular foliation that includes among other examples, 
 the partition of $M$ into orbits of a Finsler action;  for properties and more examples of SFF see \cite{Alexandrino-Alves-Javaloyes-Equifocal} and 
see \cite{Alexandrino-Alves-Javaloyes-SFF}.

\begin{definition}[SFF]
\label{definition-SFF}
A partition $\F=\{L\}$ on a complete 
Finsler manifold $(M,F)$ is called a 
\emph{singular Finsler foliation} if it satisfies
the following two conditions:
\begin{itemize}
\item[(a)] $\F$ is a \emph{singular foliation}, i.e., for each
$p\in M$, each basis $\{X_{i} \}$ of 
the tangent space $T_{p}L_{p}$ of the leaf $L_p$ 
through $p$, can be  extended to vector fields 
$\{ \vec{X}_{i}\} $ linearly independent, tangent
to the leaves of $\F$ near $p$. 
\item[(b)] $\F$ is  \emph{Finsler}, in other words, if a geodesic
$\gamma:(-\epsilon,\epsilon)\to M$ 
is orthogonal to the leaf $L_{\gamma(0)}$
(i.e. $g_{\gamma'(0)}(\gamma'(0),v)=0$ 
for each $v\in T_{\gamma(0)}L$) then $\gamma$
is horizontal, i.e., orthogonal to each leaf it meets.
\end{itemize}
\end{definition}

We also need to present a result that  is a direct consequence of Lemma  \ref{lemma-Wilking-decomposition}, 
a version of Wilking's lemma for Finsler geometry. 

\begin{proposition} \label{lemma-Jacobi-Decomposition}
Let $(M, F)$ be a complete Finsler manifold with non negative flag curvature along geodesic $\gamma: \mathbb{R} \to M$ orthogonal (at its initial point) to a submanifold $L \subset M$. Denote $\J_{\gamma}^{L}$ the set of all $L$-Jacobi fields along $\gamma$. Then
\begin{equation*}
\J_{\gamma}^{L} = \mathrm{\hspace{0.5ex}span\hspace{0.25ex}}_{\mathbb{R}} \{J \in \J_{\gamma}^{L} \;|\; J(t) = 0 \mathrm{\hspace{1ex} for \; some \hspace{1ex}} t \in \mathbb{R}\} \oplus \{J \in \J_{\gamma}^{L} \;|\; J \mathrm{ \hspace{1ex} is \; parallel} \}.
\end{equation*}
\end{proposition}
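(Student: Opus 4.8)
The plan is to follow Wilking's original strategy, transported to the Finsler setting via Proposition~\ref{proposition-covariantderivative-finsler}. Fix a geodesic vector field $\vec{v}$ extending $\dot\gamma$ along $\gamma$ (or work with the induced Riemannian metric $\hat g := g_{\dot\gamma}$ along $\gamma$ given by parallel transport of $g_{\dot\gamma(0)}$). By Proposition~\ref{proposition-covariantderivative-finsler}, the Chern covariant derivative along $\gamma$, the Jacobi operator $R_{\dot\gamma}$, the orthogonality condition, and the shape operator defining $L$-Jacobi fields all agree with the corresponding Levi-Civita notions for $\hat g$. Hence the statement is \emph{exactly} Wilking's lemma for the Riemannian manifold $(\cdot,\hat g)$ restricted to $\gamma$, and it suffices to reprove that in the Finsler-adapted language — which is what Lemma~\ref{lemma-Wilking-decomposition} is supposed to provide.

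Concretely, here is how I would carry it out. First, observe that $\J_\gamma^L$ is a finite-dimensional real vector space (dimension equal to $\dim M$, since $L$-Jacobi fields are parametrized by their value and ``tangential derivative'' on $T_{\gamma(0)}L$ together with the normal data), and that the subset of $L$-Jacobi fields vanishing at some $t$ spans a subspace $\J_0$; call $\J_{\mathrm{par}}$ the space of parallel $L$-Jacobi fields. The two spaces have trivial intersection: a parallel field vanishing somewhere is identically zero. So the real content is that $\J_\gamma^L = \J_0 + \J_{\mathrm{par}}$. Second, for $J \in \J_\gamma^L$ consider the ``index form'' / energy along $\gamma$ restricted to $[0,T]$ and exploit that non-negative flag curvature makes $t \mapsto \|J(t)\|_{\hat g}$ behave convexly in a suitable averaged sense; the standard argument shows that if $J$ does not vanish anywhere then $\|J\|$ is bounded, and a bounded Jacobi field on a complete geodesic with $K\ge 0$ must be parallel. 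Third, to get the direct sum decomposition (not just that every non-vanishing field is parallel), one runs the usual linear-algebra step: given arbitrary $J$, one subtracts off an appropriate element of $\J_0$ so that the remainder is everywhere non-vanishing, hence parallel; this uses that $\J_0$ is ``large enough,'' which is precisely Wilking's transversal-Jacobi-field bookkeeping (the Jacobi-triple structure reviewed in Section~\ref{Wilking-transverse-Jacobi-fields}).

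The technical heart — and the step I expect to be the main obstacle — is establishing that a bounded $L$-Jacobi field on a ray with $K \ge 0$ is parallel, and more delicately that the decomposition respects the $L$-Jacobi condition at $t=0$ (i.e.\ that the summands $\J_0$ and $\J_{\mathrm{par}}$ consist of genuine $L$-Jacobi fields, with the shape-operator boundary condition, not merely Jacobi fields). In the Riemannian case this is handled by the self-adjointness of $R_{\dot\gamma}$ and of the shape operator with respect to the fixed metric, together with the Wronskian/symplectic pairing on Jacobi fields being constant along $\gamma$. Here one must check these symmetries in the anisotropic setting: the relevant bilinear pairing $W(J_1,J_2) = \hat g(J_1', J_2) - \hat g(J_1, J_2')$ is still constant because $\widehat\nabla$ is metric for $\hat g$ and $\widehat R_{\dot\gamma}$ is $\hat g$-self-adjoint (Proposition~\ref{proposition-covariantderivative-finsler}(2) plus the curvature symmetries of the Levi-Civita connection of $\hat g$), and the $L$-Jacobi condition $\mathcal S_{\dot\gamma(0)} J(0) = \mathrm{tan}_{\dot\gamma(0)} J'(0)$ is the statement that $W$ vanishes on $T_{\gamma(0)}L$-data, using $g_{\dot\gamma(0)}$-self-adjointness of the shape operator $\mathcal S_{\dot\gamma(0)}$. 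Once these symmetry facts are in place — and they follow from Lemma~\ref{lemma-Wilking-decomposition} / the material of Section~\ref{Wilking-transverse-Jacobi-fields} — the decomposition is immediate from Wilking's argument applied verbatim. I would therefore simply state: ``This follows at once from Lemma~\ref{lemma-Wilking-decomposition} by taking the submanifold there to be $L$ and using Proposition~\ref{proposition-covariantderivative-finsler} to identify the Finslerian $L$-Jacobi fields along $\gamma$ with the $\hat g$-Jacobi fields satisfying the $L$-boundary condition.''
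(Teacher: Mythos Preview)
Your proposal is correct and matches the paper's approach: Proposition~\ref{lemma-Jacobi-Decomposition} is obtained there as an immediate consequence of the abstract Wilking decomposition (Lemma~\ref{lemma-Wilking-decomposition}) applied with $\L = \J_\gamma^L$, which Example~\ref{L-Jacobi-fields-I} verifies to be a Lagrangian subspace of the Jacobi triple $(\gamma^\ast TM, D^{\gamma'}, R_{\gamma'})$. Note only that Lemma~\ref{lemma-Wilking-decomposition} is phrased for abstract Jacobi triples and Lagrangian subspaces rather than for submanifolds, so your reduction via Proposition~\ref{proposition-covariantderivative-finsler} to the Riemannian metric $\hat g$, while valid, is not needed --- the paper's Jacobi-triple framework handles the Finsler case directly, and your informal middle step (``non-vanishing $\Rightarrow$ bounded $\Rightarrow$ parallel'') is not correct as stated and is indeed superseded by the appeal to Lemma~\ref{lemma-Wilking-decomposition}.
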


As explained in the introduction, item (b) of Theorem \ref{theorem-dualizacao} follows direct from item (a) of Theorem \ref{theorem-dualizacao}
and Proposition \ref{proposition-Orbit-positive}. Therefore let us prove this proposition in this section.

Let $\gamma:\mathbb{R}\to M$ be a unit speed geodesic orthogonal to a regular leaf $L$ at $\gamma(0)$ (i.e., $\gamma$ is an
horizontal geodesic). 
First we want to check that the first  summand of the decomposition presented in Proposition \ref{lemma-Jacobi-Decomposition}
is tangent to the orbit $\mathcal{O}(\gamma(0))$. To prove this it suffices to prove the next lemma


\begin{lemma}
If a Jacobi field $J\in J_{\gamma}^{L}$ has zero at $t_0$ (i.e., $J(t_0)=0$) then it is tangent to the orbit $\mathcal{O}(\gamma(0))$.
\end{lemma}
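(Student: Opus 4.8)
The plan is to build, from the $L$-Jacobi field $J$ with a zero at $t_0$, an honest variation of $\gamma$ through horizontal broken geodesics (indeed through horizontal geodesics emanating orthogonally from $L$), whose variational field at the relevant parameter value is $J$. Concretely, by Remark \ref{remark-L-jacobi}, since $J\in\J^{L}_{\gamma}$, there is a variation $s\mapsto\gamma_s$ of $\gamma=\gamma_0$ by unit-speed geodesics each orthogonal to $L$ at its initial point, with $\frac{\partial}{\partial s}\gamma_s(t)\big|_{s=0}=J(t)$. Each $\gamma_s$ is a horizontal geodesic of the submersion (by the Finsler condition, Definition \ref{definition-SFF}(b), or directly because $\gamma_s$ starts orthogonal to the regular leaf $L$), hence each $\gamma_s$ is an integral curve of one of the horizontal unit geodesic vector fields in $\mathcal{C}$, up to reparametrization — more precisely $\gamma_s(t)=e^{t\vec{\mathtt{g}}}$ projected down, and $\gamma_s(t)\in\mathcal{O}(\gamma_s(0))$.

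The key point is then to connect the initial points: the curve $s\mapsto\gamma_s(0)$ lies on $L$, and $L$ itself is a regular leaf, hence lies inside every orbit $\mathcal{O}(q)$ for $q\in L$ (a leaf of the Finsler foliation $\F$ is contained in the orbit $\mathcal{O}(q)$ of $\mathcal{C}$ through any of its points — one can move along $L$ by following horizontal geodesics out and back, or simply because $T_qL$ together with the horizontal directions generates $\mathrm{Lie}_q(\mathcal{C})$; in the homogeneous case $L=G(q)$ is an orbit of the group and the tangent directions to $L$ are realized by vector fields compatible with $\mathcal{C}$). Therefore all the points $\gamma_s(0)$ lie in a single orbit, namely $\mathcal{O}(\gamma(0))$, and consequently all the points $\gamma_s(t)$ lie in $\mathcal{O}(\gamma(0))$ as well, for every $s$ and $t$. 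Thus the whole two-parameter family $(s,t)\mapsto\gamma_s(t)$ takes values in $\mathcal{O}(\gamma(0))$, which is an immersed submanifold. Differentiating $s\mapsto\gamma_s(t_0)$ at $s=0$ produces a vector tangent to $\mathcal{O}(\gamma(0))$; but that vector is $J(t_0)=0$, so this particular derivative carries no information. Instead I differentiate at a parameter where $J$ does not vanish: since $J$ is a Jacobi field with $J(t_0)=0$ and $J$ is not identically zero (if it were, there is nothing to prove), the field $J(t)$ is nonzero for $t$ near $t_0$, $t\neq t_0$, and at such a $t$ the vector $J(t)=\frac{\partial}{\partial s}\gamma_s(t)\big|_{s=0}\in T_{\gamma(t)}\mathcal{O}(\gamma(0))$. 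Since $T\mathcal{O}(\gamma(0))$ is invariant under the flows generated by $\mathcal{C}$ (the orbit is a union of integral curves) and $\gamma$ itself stays in $\mathcal{O}(\gamma(0))$, parallel/Jacobi transport of $J$ along $\gamma$ keeps it tangent to the orbit; in particular $J(t)\in T_{\gamma(t)}\mathcal{O}(\gamma(0))$ for all $t$, which is the assertion.

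In carrying this out I would: (1) invoke Remark \ref{remark-L-jacobi} to get the orthogonal-geodesic variation realizing $J$; (2) argue that $L$ being a regular leaf is contained in a single orbit $\mathcal{O}(\gamma(0))$, so all basepoints $\gamma_s(0)$ are in that orbit; (3) argue each $\gamma_s$ is a horizontal geodesic, hence $\gamma_s(\mathbb{R})\subset\mathcal{O}(\gamma_s(0))=\mathcal{O}(\gamma(0))$; (4) conclude $(s,t)\mapsto\gamma_s(t)$ maps into $\mathcal{O}(\gamma(0))$ and differentiate in $s$ at a value of $t$ near $t_0$ with $J(t)\neq 0$ to land $J(t)$ in the tangent space of the orbit; (5) propagate along $\gamma$ using invariance of the orbit's tangent distribution to get $J(t)\in T_{\gamma(t)}\mathcal{O}(\gamma(0))$ for all $t$, in particular recovering tangency at $\gamma(0)$, as claimed.

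The main obstacle I expect is step (2)–(4) made rigorous as a statement about the immersed — possibly not embedded — manifold $\mathcal{O}(\gamma(0))$: one must ensure that the smooth map $(s,t)\mapsto\gamma_s(t)$ actually factors continuously, indeed smoothly, through the orbit with its intrinsic immersed topology (so that the $s$-derivative genuinely lands in the intrinsic tangent space), rather than merely having image contained in the orbit as a subset of $M$. This is the standard subtlety with immersed leaves; it is handled by the fact, available from the Nagano–Stefan–Sussmann theorem and the local finite generation of $\mathrm{Lie}(\mathcal{C})$ (Remark \ref{remark-analicidade-localmente-finito}), that a smooth map into $M$ whose image lies in a single orbit is automatically smooth into the orbit, together with the explicit description of the points $\gamma_s(t)$ as reached from $\gamma(0)$ by concrete compositions of flows of elements of $\mathcal{C}$ (and their reverses) depending smoothly on $s$.
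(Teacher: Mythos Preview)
Your approach has a genuine gap at step (2): the claim that the leaf $L$ is contained in the orbit $\mathcal{O}(\gamma(0))$ of $\mathcal{C}$ is false in general. Take the trivial Riemannian product $M=B\times F$ with $\rho$ the projection onto $B$; the horizontal geodesics are $(\beta(t),f_0)$ for $\beta$ a geodesic in $B$, so $\mathcal{O}((b_0,f_0))=B\times\{f_0\}$, which does not contain the leaf $L=\{b_0\}\times F$. Your justification (``move along $L$ by going out and back'' or ``$T_qL$ lies in $\mathrm{Lie}_q(\mathcal{C})$'') fails here: the brackets of horizontal geodesic fields never produce the vertical directions. More tellingly, your argument never uses the hypothesis $J(t_0)=0$ in an essential way; if it worked, it would show that \emph{every} $L$-Jacobi field is tangent to $\mathcal{O}(\gamma(0))$, which would make the Wilking decomposition (Proposition~\ref{lemma-Jacobi-Decomposition}) and the positive-curvature hypothesis superfluous --- and is plainly false in the product example, where the vertical parallel Jacobi fields along $\gamma$ are $L$-Jacobi but orthogonal to the orbit.

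The paper's proof uses $J(t_0)=0$ in a crucial way: differentiating the horizontality condition $g_{\gamma_s'}(\vec{X}_i,\gamma_s')=0$ at $(s,t)=(0,t_0)$, and exploiting $J(t_0)=0$ to kill the term $\nabla^{\gamma'}_{J(t_0)}\vec{X}_i$, one obtains $g_{\gamma'(t_0)}(X_i,J'(t_0))=0$, i.e.\ $J'(t_0)\in\mathcal{H}(t_0)$. Since the unit normal cone $\nu^1_{\gamma(t_0)}(L_{\gamma(t_0)})$ is tangent to $\mathcal{H}(t_0)$ at $\gamma'(t_0)$, one can realise $J$ as the variational field of a family $f(s,t)=\pi\big(e^{(t-t_0)\vec{\mathtt g}}v(s)\big)$ with $v(s)\in\nu^1_{\gamma(t_0)}(L_{\gamma(t_0)})$, $v(0)=\gamma'(t_0)$, $v'(0)=J'(t_0)$. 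These geodesics all start at the \emph{same point} $\gamma(t_0)$ and are all horizontal, so they lie in $\mathcal{O}(\gamma(t_0))=\mathcal{O}(\gamma(0))$, and $J$ is tangent to this family. The zero of $J$ is precisely what allows the variation to be anchored at a single point, sidestepping the (false) need to place all of $L$ inside the orbit.
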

\begin{proof}

Let $t\to\gamma_{s}(t)=\gamma(s,t)$  be a  variation of horizontal unit  geodesics
orthogonal to $L_p$ with $p=\gamma(0,0)=\gamma(0)$ and so that $J(t)=\frac{\partial}{\partial s}\gamma(0,t)$.
Consider a basis $\{X_{i}\}$ of $T_{\gamma(t_0)} L_{\gamma(t_0)}.$    
It follows from item (a) of Definition \ref{definition-SFF} that these vectors can be extended to vector fields  $\{\vec{X}_i \}$
that are linearly  independent. 
From item (b) of Definition \ref{definition-SFF}, we have that $0=g_{\gamma'_{s}}(\vec{X}_{i},\gamma'_{s}).$ By Differentiating this equation 
and taking into account   item (b) of Proposition \ref{proposition-chern-connection},  we infer:
\begin{eqnarray*}
0 & = & \frac{\partial }{\partial s} g_{\gamma'_{s}}(\vec{X}_i,\gamma_{s}'(t))\\
& = &  g_{\gamma'_{s}}(\frac{\nabla^{\gamma_{s}'} }{\partial s} \vec{X}_i,\gamma_{s}'(t))\\
& + &  g_{\gamma'_{s}}(\vec{X}_i,\frac{\nabla^{\gamma_{s}'} }{\partial s} \frac{\partial }{\partial t}\gamma_{s}(t))\\
& + & C_{\gamma'_{s}}(\frac{\nabla^{\gamma_{s}'} }{\partial s}\gamma_{s}'(t), \vec{X}_{i},\gamma_{s}'(t))
\end{eqnarray*}
The above equation, the fact that 
$\frac{\nabla^{\gamma_{s}'} }{\partial s} \vec{X}_i|_{_{s=0, t=t_0}}=\nabla^{\gamma_{0}'}_{J(t_0)}  \vec{X}_i=\nabla^{\gamma_{0}'}_{0}  \vec{X}_i=0$ 
and Eq.\eqref{eq-propriedade-tensor-Cartan}
allow us to  conclude that 
\begin{equation}
\label{eq-prova-prop-orbit-positive-derivadaJ}
 0=  g_{\gamma'_{0}}\Big(\vec{X}_i,\frac{\nabla^{\gamma_{0}'} }{\partial t} \frac{\partial }{\partial s}\gamma_{s}(t_{0} )|_{s=0}\Big)
= g_{\gamma'_{0}}(X_i,\frac{\nabla^{\gamma_{0}'} }{\partial t} J(t_{0})).
\end{equation} 

Eq.\eqref{eq-prova-prop-orbit-positive-derivadaJ} implies that 
\begin{equation} 
\label{eq2-prova-prop-orbit-positive-derivadaJ}
J'(t_0)\in  \mathcal{H}(t_0) \mathrm{\,  and \,} J(t_0)=0.  
\end{equation}
Here $\mathcal{H}(t_0)=\{w \in T_{\gamma(t_0)}M | , g_{\gamma'(t_0)}(w, X_{i}(\gamma(t_0))=0, \forall i\}.$

We claim that  \emph{the normal cone $\nu_{\gamma(t_0)}(L_{\gamma(t_0)})$ is tangent to $\mathcal{H}(t_0)$}.
In order to check this claim, consider a curve  $s\to v(s)$ with $v(0)=\gamma'(t_0)$ contained in 
 the unit normal cone, i.e., $g_{v(s)}(v(s), X_i(\gamma(t_0)))=0,\, \forall i.$ By differentiating this equation with respect to $s$
and  taking into account  item (b) of Proposition \ref{proposition-chern-connection}, we infer  
 that $g_{\gamma'(t_0)}(v'(0),X_{i}(\gamma(t_0)))=0,  \, \forall i$, i.e, that $v'(0)$ is tangent to $\mathcal{H}(t_0)$. 
An argument comparing dimensions allows one to conclude
the proof of the claim, see also proof of \cite[Lemma 2.9]{Alexandrino-Alves-Javaloyes-Equifocal}.

The claim and  Eq.\eqref{eq2-prova-prop-orbit-positive-derivadaJ} imply
the existence of a variation of geodesics $t\to f(s,t)$ so that
\begin{itemize}
\item $t\to f(0,t)=\gamma(t),$
\item  $t\to f(s,t)$  are geodesics orthogonal to $L_{\gamma(t_0)}$ i.e., contained in $\mathcal{O}(\gamma(t_0)),$
\item $f(s,t_0)=f(0,t_0)=\gamma(t_0)$ and $J(t)=\frac{\partial}{\partial s} f(0,t).$
\end{itemize}
In fact we can define $f(s,t)=\pi\Big( e^{(t-t_0)\vec{\mathtt{g}}}v(s) \Big)$
 where $\pi:TM\to M$ is the canonical projection and   $s\to v(s)$  is a curve contained 
 in unit cone $\nu_{\gamma(t_0)}^{1}(L_{\gamma(t_0)})$ with $v'(0)=J'(t_0)$ and $v(0)=\gamma'(t_0).$ 

Since $t\to f(s,t)$ are geodesics  contained in $\mathcal{O}(\gamma(t_0))$ and  
 $\gamma(0)\in \mathcal{O}(\gamma(t_{0})),$ we conclude that the variation  $t\to f(s,t)$ is contained in 
$\mathcal{O}(\gamma(0))$ and hence that $t\to J(t)=\frac{\partial}{\partial s} f(0,t)$ is tangent  to  $\mathcal{O}(\gamma(0))$ what finishes the proof.

\end{proof}

Let us now check that codimension of the dual leaf is zero.
Assume by contradiction that there exists $v \in L_{\gamma(0)}$ orthogonal to $\mathcal{O}(\gamma(0))$, 
where $\gamma$ is an horizontal geodesic with $\gamma(0)=q$ with $K(q) > 0$ for $q\in L_{q_0}$. 
Consider a $L_{\gamma(0)}$-Jacobi field $J$, so that $J(0)=v$. 
Since we have proved above that the first  summand of the decomposition presented in Proposition \ref{lemma-Jacobi-Decomposition}
is tangent to the orbit $\mathcal{O}(\gamma(0))$, we have that $J$ can not be contained in this summand. Hence, by Proposition \ref{lemma-Jacobi-Decomposition}, 
$J$ must be a non trivial parallel Jacobi vector field, that implies that
the curvature can not be positive, what contradicts our hypothesis that $K(q)>0$.

Since we have proved that $\mathcal{O}(q)$ has codimension zero for each $q\in L_{q_0}$ and each point $x\in M$ is contained in
an orbit $\mathcal{O}(q)$ (for $q\in L_{q_0}$), we conclude that $\mathcal{O}(q_0)=M$.

It follows from item (a) of Theorem \ref{theorem-dualizacao} that $M=\mathcal{O}(q_0)=\mathcal{O}(q)=\mathcal{A}_{q}(\mathcal{C})$. 
This conclude the proof of Proposition \ref{proposition-Orbit-positive} and hence the proof 
 of item (b) of Theorem \ref{theorem-dualizacao}.


\vspace{\baselineskip}

\section{Wilking's transverse Jacobi fields} \label{Wilking-transverse-Jacobi-fields}

We reproduce here, in more general context, the construction of transverse Jacobi fields presented in \cite{Wilking-duality}, in \cite{Lytchak}
and in \cite{lecturesRadeschi} in order to obtain a Fislerian version of the Corollary 10 in \cite{Wilking-duality}, 
i.e., Proposition \ref{lemma-Jacobi-Decomposition}.

\vspace{0.25\baselineskip}

\subsection{Jacobi Triples and Jacobi Equation} \label{Jacobi-triples-Jacobi-equation}

A \emph{Jacobi triple} $(E, D, R)$ is composed by
\begin{itemize}
\item Euclidean vector field $E$ (total space) over a open interval $I \subset \mathbb{R}$ (with rank $n$);

\item a covariant derivative $D: \Gamma(E) \to \Gamma(E)$ compatible with the fiberwise metric of $E$;

\item a self-adjoint $C^\infty (I)$-homomorphism $R: \Gamma(E) \to \Gamma(E)$.
\end{itemize}

Presented in this way, this definition seems a little artificial. Indeed, this is an algebraic approach 
 whose intention is to condense the relevant data and properties of Jacobi fields that will be useful throughout section \ref{Wilking-transverse-Jacobi-fields}.

Given a Jacobi triple $(E, D, R)$, the kernel of the second order differential operator $D^2 + R$ will be called the space of $(E, D, R)$-Jacobi fields (or simply \emph{Jacobi fields}) and will be denoted by $\J (E, D, R)$ (or simply by $\J$). Since $D^2 + R$ is a linear operator, for each $t \in I$, the map $J \longmapsto (J(t), DJ(t))$ is an isomorphism between $\J$ and $E_t \oplus E_t$. In particular, $\dim (\J) = 2 \rank (E) = 2 n$.

The space of Jacobi fields $\J$ inherit a canonical symplectic form $\omega$ given by
\begin{equation*}
\omega (J_1, J_2) := \Scal{D J_1}{J_2} - \Scal{J_1}{D J_2}.
\end{equation*}
Note that the right term of this definition is in fact independent of the $t$ parameter. Precisely $\omega$ is the pullback of the canonical form by the isomorphism $J \longmapsto (J(t), DJ(t))$. As usual, the space of Lagrangian subspaces of $(\J, \omega)$ (the Grassmannian Lagrangian of $(\J, \omega)$) will be denoted by $\Lambda (\J)$, i.e.
\begin{equation} \label{Lagrangian-Grassmannian}
\Lambda (\J) := \{\L \subset \J : \L \text{ is a Lagrangian subspace of } \J\}.
\end{equation}

Finally we establish some useful notation. Given $\I \subset \J$ a vector subspace of Jacobi fields for each $t \in I$ it will be denoted
\begin{equation*}
\I (t) := \{J(t) \in E_t : J \in \I\} \qquad\text{and}\qquad \I^{0}_{t} := \{J \in \I : J(t) = 0\}.
\end{equation*}
Following this notation we point out that $\I^{0}_{t}$ is isomorphic to $D (\I^{0}_{t}) (t) = \{D J (t) : J \in \I^{0}_{t}\}$.

\vspace{0.25\baselineskip}

\subsection{Illustrative examples} \label{illustrative-examples}

Looking for a consolidation of our algebraic approach we present some geometrical examples of subspaces of Jacobi fields in an increasing rate of complexity. The focus is the subspaces determined by the symplectic structure (i.e. isotropic and Lagrangian subspaces). Some future notation will be presented as well.

\vspace{0.15\baselineskip}
\begin{example}[Finslerian Jacobi fields]
Let $M$ be a Finsler manifold with fundamental tensor $g$ and $\gamma$ a geodesic segment. Observe that $(\gamma^\ast TM, g_{\gamma'(\cdot)})$ is a Euclidian vector bundle over $I$. Denote by $D_{\gamma}^{\gamma'}$ the Chern covariant derivative along $\gamma$ and $R^{\gamma'}$ the Jacobi operator along $\gamma$. Then $(\gamma^\ast TM, D^{\gamma'}, R_{\gamma'})$ is a Jacobi triple.

The set of Jacobi fields associated with $(\gamma^\ast TM, D^{\gamma'}, R_{\gamma'})$ will be denoted by $\J_\gamma$.
\end{example}
\vspace{0.15\baselineskip}

In the next two examples we are going to use the notation established in the previous example.

\vspace{0.15\baselineskip}
\begin{example}[L-Jacobi fields] \label{L-Jacobi-fields-I}
Let $M^n$ be a Finsler manifold, $L \subset M$ a immersed submanifold and $\gamma: [a, b] \subset \R \longrightarrow M$ a geodesic segment such that $\gamma(a) \in L$ and $\gamma'(a)$ is $g_{\gamma'(a)}$-orthogonal to $L$.
Denote $\text{pr}_L: T_{\gamma(a)} M \to T_{\gamma(a)}L$ the canonical projection with respect to the $g_{\gamma'}$-orthogonal decomposition of $T_{\gamma(a)} M$.

The set o $L$-Jacobi fields is defined by
\begin{equation*}
\J_{\gamma}^{L} := \{J \in \J_\gamma : J(a) \in T_{\gamma(a)} L \text{ \hspace{0.25ex}and\hspace{0.25ex} } \text{pr}_L (D_{\gamma}^{\gamma'} J (a)) = S_{\gamma'(a)} (J (a))\},
\end{equation*}
where $S_{\gamma'(a)}$ is the shape operator of $L$ in the direction $\gamma'(a)$.
As we previously discuss, this is precisely the Jacobi fields obtained by variations of $\gamma$ through geodesics starting perpendicular to $L$.
An advantage in this presentation is that will became more easily to see that $\J_{\gamma}^{L}$ is a Lagrangian subspace of Jacobi fields. Clearly the self-adjointness of $S_{\gamma'(a)}$ guarantees that $\J_{\gamma}^{L}$ is isotropic, i.e, 
\begin{align*}
\omega (J_1, J_2) &= \langle D_{\gamma}^{\gamma'} (J_1), J_2 \rangle - \langle J_1, D_{\gamma}^{\gamma'} (J_2) \rangle \\
&= \langle - S_{\gamma'(a)} (J_1 (a)), J_2 (a) \rangle - \langle J_1 (a), - S_{\gamma'(a)} (J_2 (a)) \rangle \\
&= 0
\end{align*}
where $J_1, J_2 \in \J_{\gamma}^{L}$. The dimension of $\J_{\gamma}^{L}$ is determined by the linearly independent choices for the initial conditions of its Jacobi fields which implies that $\dim (\J_{\gamma}^{L}) = \dim (T_{\gamma(a)} L) + \dim ((T_{\gamma(a)} L)^\omega) = \frac{1}{2} \dim (\J_\gamma)$.
\end{example}

\vspace{0.15\baselineskip}

\begin{example}[Finsler submersions] \label{Finsler-Submersions-Example}
Let $\pi: M^{m+k} \longrightarrow B^{k}$ be a Finsler submersion and $\gamma: [a, b] \subset \R \longrightarrow M$ a horizontal geodesic;
 see Figure \ref{figura1-Wilking-distribution}. Along $\gamma$ it is possible to consider a horizontal bundle $H$ by $g_{\gamma'}$-orthogonal complement of the vertical bundle $V := \gamma^\ast \Ker (d \pi)$, which will allow us to define operators analogous to the O'Neill tensors in Riemannian submersions and thus be able to work with holonomy type of Jacobi fields and projectable Jacobi fields just like in the Riemannian case. More precisely, we  consider $S_{\gamma'}: \Gamma(V) \to \Gamma(V)$ the shape operator of the fibers in the direction $\gamma'$, we  define $\mathbb{A}_{\gamma'}: \Gamma(\gamma^\ast TM) \to \Gamma(\gamma^\ast TM)$ by $\mathbb{A}_{\gamma'} (X) := (D_{\gamma}^{\gamma'} X^V)^H + (D_{\gamma}^{\gamma'} X^H)^V$ and denote $A_{\gamma'}:= \mathbb{A}_{\gamma'} |_{\Gamma(H)}$.

\vspace{0.15\baselineskip}

The set of holonomy type Jacobi fields (along $\gamma$) is defined by
\begin{equation*}
\J_{\gamma}^{\hol} := \{J \in \J_\gamma : J (a) \in V_{\gamma(a)} \text{ \hspace{0.25ex}and\hspace{0.25ex} } D_{\gamma}^{\gamma'} J (a) = - (S_{\gamma'(a)} + A_{\gamma'(a)}^{\ast}) (J(a))\}
\end{equation*}
and this is an example of isotropic subspace of $\J_\gamma$. This is simple to verify since the shape operator is self-adjoint and the holonomy type Jacobi fields are vertical in its initial point. In fact they are everywhere vertical. Additionally, once a Jacobi field is determined by its initial conditions it is simple to conclude that $\dim (\J_{\gamma}^{\hol}) = \rank (V)=k$.

Like in the case of $L$-Jacobi fields it is more suitable for computations to define $\J_{\gamma}^{\hol}$ algebraically and postpone its geometrical meaning.
 In this case they can be obtained by variations of $\gamma$ through horizontal geodesics that are horizontal lifts of the geodesic $\pi \circ \gamma$.

\vspace{0.15\baselineskip}

Another remarkable space of Jacobi field associated with a Finsler 
submersion is the set of projectable Jacobi fields which is defined by
\begin{equation*}
\J_{\gamma}^{\proj} := \{J \in \J_\gamma : D_{\gamma}^{\gamma'} J^V = - S_{\gamma'} (J^V) - A_{\gamma'} (J^H)\}
\end{equation*}
and this is an example of coisotropic subspace of $\J_\gamma$. More precisely this is the symplectic orthogonal of $\J_{\gamma}^{\hol}$. To see this, we first observe that $\J_{\gamma}^{\proj} \subset (\J_{\gamma}^{\hol})^\omega$ which is a directly consequence of the subsequent computation 
\begin{align*}
\omega (J_1, J_2) &= \langle D_{\gamma}^{\gamma'} J_1, J_2 \rangle - \langle J_1, D_{\gamma}^{\gamma'} J_2 \rangle \\
&= \langle - S_{\gamma'(a)} (J_1 (a)), J_{2}^{V} (a) \rangle + \langle -A_{\gamma'(a)}^{\ast}(J_1 (a)), J_{2}^{H} (a) \rangle + \\
&\quad - \langle J_1 (a), -S_{\gamma'(a)} (J_{2}^{V} (a)) \rangle - \langle J_1 (a), -A_{\gamma'(a)} (J_{2}^{H}(a)) \rangle \\
&= 0
\end{align*}
where $J_1 \in \J_{\gamma}^{\hol}$ and $J_2 \in \J_{\gamma}^{\proj}$. Since the dimension of a subspace of Jacobi fields is determined by linearly independent choices for the initial conditions, clearly we have $\dim (\J_{\gamma}^{\proj}) = \dim (T_{\gamma(a)} M) + \rank (H) = 2n + k$. 

It is worth to mention that the projectable Jacobi fields can be obtained by variations of $\gamma$ through horizontal geodesics. The term ``projectable'' is due to the fact that each Jacobi field of $\J_{\gamma}^{\proj}$ is $\pi$-related to a Jacobi field of $\J_{\pi \circ \gamma}$. In fact, there is a well defined projection map $\pi_\ast: \J_{\gamma}^{\proj} \to \J_{\pi \circ \gamma}$ which is surjective and such that $\Ker (\pi_\ast) = \J_{\gamma}^{\hol}$.

\vspace{0.25\baselineskip}

In conclusion, a Finsler submersion offers to us an example where all the symplectic-types subspaces of Jacobi fields (isotropic $\I$, Lagrangian $\L$ and coisotropic $\I^\omega$) are present, as follows
\begin{center}
\vspace{0.5\baselineskip}
\begin{tabular}{ccccc}
$\I$&$\subset$&$\L$&$\subset$&$\I^\omega$ \\
\rotatebox[origin=c]{90}{$=$}&&\rotatebox[origin=c]{90}{$=$}&&\rotatebox[origin=c]{90}{$=$} \\
\\[-2ex]
$\J_{\gamma}^{\hol}$&$\subset$&$\J_{\gamma}^{M_b}$&$\subset$&$\J_{\gamma}^{\proj}$
\end{tabular}
\vspace{0.5\baselineskip}
\end{center}
where $M_b$ is the fiber of $\pi$ through $b = \pi(\gamma(a))$. Furthermore, as consequence of the isomorphism theorem applied to $\pi_\ast$, we have the following nice geometric interpretation for the symplectic reduction
\begin{equation*}
\frac{\I^\omega}{\I} = \frac{\J_{\gamma}^{\proj}}{\J_{\gamma}^{\hol}} \approx \J_{\pi \circ \gamma}.
\end{equation*}
\end{example}

\vspace{0.25\baselineskip}

\begin{center}
\begin{figure}[tbp]
	\includegraphics[scale=0.3]{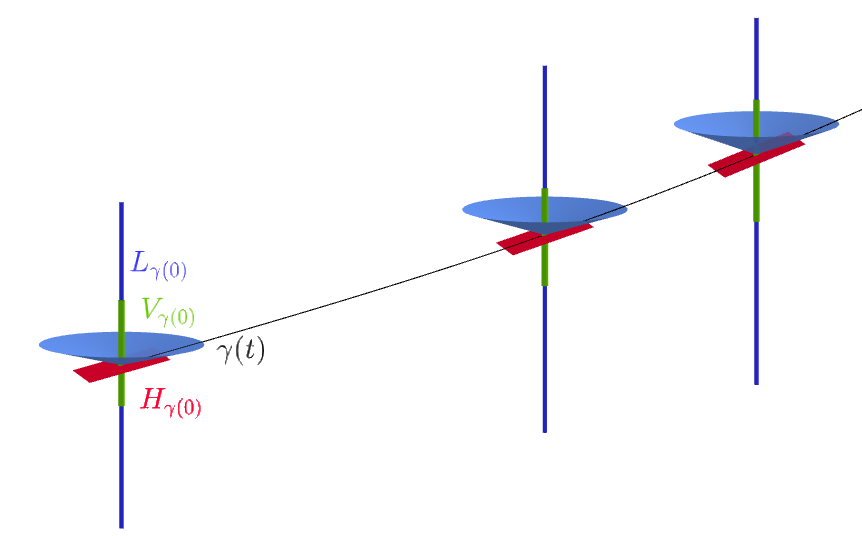}
	\caption{Figure generated by the 
software geogebra.org  illustrating a submersion on  Randers space, 
with horizontal geodesic $\gamma$, horizontal bundle  \textcolor{red}{$H_t$} and 
the vertical bundle \textcolor{Green}{$V_t$} of  Finsler submersion, see Example \ref{Finsler-Submersions-Example}}
	\label{figura1-Wilking-distribution}
\end{figure}
\end{center}

\subsection{Structures associated with isotropic subpaces of Jacobi fields} \label{Structures-associated-isotropic-subpaces-Jacobi-fields}

We stress in this section some technical results required to prove Lemma \ref{Wilking-decomposition-lemma} in a manner that the reader can skip the proofs in this section without a further damage in the comprehension of this lemma and its proof.

More precisely we are going to generalize some aspects of the holonomy type Jacobi fields present in Example \ref{Finsler-Submersions-Example} to any isotropic subspace $\I$ of a given Jacobi triple $(E, D, R)$. Two structures associated with a $\I$ will be explored, the $\I$-transverse Jacobi fields and the $\I$-Riccati operators. Although this generalization seems futile it will be quite useful in the proof of Lemma \ref{Wilking-decomposition-lemma} where the choice of a specific isotropic subspace $\I$ is the key idea of the proof.

\subsubsection{Horizontal bundle and transverse Jacobi fields} \label{Horizontal-bundle-transverse-Jacobi-fields}

We start with a Lemma that describes the vertical bundle associated with an isotropic subspace $\I$.

\vspace{0.15\baselineskip}
\begin{lemma} \label{properties-isotropic}
Let $\I \subset \J$ be an isotropic subspace. Then
\begin{enumerate}
\item \label{properties-isotropic-c}
{\bf(Singular instants)} The set $\{t \in I \;|\; \I_{t}^{0} \neq \{0\}\} \subset I$ is discrete.
\item \label{properties-isotropic-d}
{\bf(Vertical bundle)} The set
\begin{equation*}
V^\I := \coprod_{t \in I} \, \I(t) \oplus D(\I_{t}^{0})(t)
\end{equation*}
is a vector subbundle of $E$ with rank equal to the dimension of $\I$.
\end{enumerate}
\end{lemma}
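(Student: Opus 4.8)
The plan is to argue both items by exploiting the symplectic structure on $\J$ together with the isomorphism $J \mapsto (J(t), DJ(t))$ between $\J$ and $E_t \oplus E_t$. For item \eqref{properties-isotropic-c}, I would fix $\I$ and define, for each $t$, the linear map $\Phi_t \colon \I \to E_t$, $J \mapsto J(t)$; its kernel is exactly $\I_t^0$. The function $t \mapsto \dim \I(t) = \rank \Phi_t$ is lower semicontinuous because $\Phi_t$ depends smoothly (indeed analytically, since Jacobi fields solve a linear ODE with smooth coefficients) on $t$; hence the ``singular'' set where $\dim \I_t^0 > 0$ is closed with empty interior on the complement... but closedness alone is not discreteness. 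The right tool is to observe that a nonzero $J \in \I_t^0$ has $J(t)=0$ and $DJ(t) \neq 0$ (since $(J(t),DJ(t))$ determines $J \neq 0$), so $t$ is an isolated zero of the (real-analytic, if coefficients are analytic; in general, solution of a second-order linear ODE) vector-valued function $J$. I would instead give the cleaner argument: if $t_0$ is a singular instant with $\dim \I_{t_0}^0 = d \geq 1$, pick a complement so that near $t_0$ the fields in $\I_{t_0}^0$ have the form $J(t) = (t-t_0) \widetilde{J}(t)$ with $\widetilde{J}(t_0) = DJ(t_0) \neq 0$; then for $t$ close to $t_0$, $t \neq t_0$, these $d$ fields are linearly independent at $t$, forcing $\I_t^0 \cap \I_{t_0}^0 = \{0\}$ for $t$ near $t_0$, $t\neq t_0$. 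Combined with semicontinuity of $\dim \I(t)$ this localizes and finishes discreteness.

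For item \eqref{properties-isotropic-d} I would check that $V^\I$ has locally constant rank and is locally spanned by smooth sections; local constancy of rank is the crux. First, using the isomorphism $\J \cong E_t \oplus E_t$ one has $\dim(\I(t)) + \dim(\I_t^0) = \dim \I$ for every $t$, and $\I_t^0$ is isomorphic to $D(\I_t^0)(t)$ via $J \mapsto DJ(t)$ (this is stated right after the definitions in \S\ref{Jacobi-triples-Jacobi-equation}); so I must show the sum $\I(t) \oplus D(\I_t^0)(t)$ is in fact direct, i.e. $\I(t) \cap D(\I_t^0)(t) = \{0\}$, and that its dimension is constantly $\dim \I$. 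The directness is where isotropy enters: if $w = J_1(t) = DJ_2(t)$ with $J_1 \in \I$, $J_2 \in \I_t^0$, then $\omega(J_1,J_2) = \langle DJ_1, J_2\rangle - \langle J_1, DJ_2\rangle = 0 - \langle J_1(t), DJ_2(t)\rangle = -|w|^2$, and since $\omega$ vanishes on $\I \supset \I_t^0 \ni J_2$, we get $w = 0$. Hence $\dim V^\I_t = \dim\I(t) + \dim\I_t^0 = \dim\I$ for all $t$, so the rank is constant.

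It remains to produce local smooth frames. Away from the discrete singular set of item \eqref{properties-isotropic-c}, $\I_t^0 = 0$ and $V^\I_t = \I(t)$, which is smoothly spanned by $J_1(t),\dots,J_r(t)$ for any basis $J_1,\dots,J_r$ of $\I$ (after discarding dependencies locally), so smoothness there is immediate. Near a singular instant $t_0$, I would split $\I = \I_{t_0}^0 \oplus W$ as a vector space, take a basis $K_1,\dots,K_d$ of $\I_{t_0}^0$ and $J_1,\dots,J_{r-d}$ of $W$; then near $t_0$ the sections $t \mapsto J_1(t),\dots,J_{r-d}(t)$ together with the rescaled sections $t \mapsto \frac{1}{t-t_0}K_1(t),\dots,\frac{1}{t-t_0}K_d(t)$ (each of which extends smoothly across $t_0$ with value $DK_i(t_0)$, because $K_i(t_0)=0$ and Jacobi fields are smooth) form a smooth frame of $V^\I$ on a neighborhood of $t_0$: at $t_0$ they evaluate to a basis of $\I(t_0) \oplus D(\I_{t_0}^0)(t_0)$, and by continuity they stay independent nearby, spanning a rank-$(\dim\I)$ subbundle which must coincide with $V^\I$ by the rank count. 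This gives $V^\I$ the structure of a smooth subbundle of $E$, completing the proof.

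I expect the main obstacle to be the bookkeeping near singular instants — specifically, verifying that the rescaled sections $\frac{1}{t-t_0}K_i(t)$ are genuinely the correct smooth extensions and that the extended frame is linearly independent on a full neighborhood (not just at $t_0$ and the punctured side). This is really a statement about the leading-order Taylor behavior of solutions of the Jacobi equation vanishing at $t_0$, and care is needed because $\I_{t_0}^0$ could be forced to vanish to order exactly one (not higher) at $t_0$, which is exactly what the isomorphism $\I_{t_0}^0 \cong D(\I_{t_0}^0)(t_0)$ guarantees; making that quantitative is the one spot where the argument is more than formal.
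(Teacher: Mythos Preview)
Your proposal is essentially correct and matches the standard argument (the paper itself gives no proof, deferring to Lemma~3.3 of \cite{Alexandrino-Alves-Javaloyes-Equifocal}). The key step --- using isotropy to show $\I(t)\cap D(\I_t^0)(t)=\{0\}$ via $\omega(J_1,J_2)=-|w|^2$ --- is exactly right, and your smooth-frame construction near a singular instant via the rescaled sections $\tfrac{1}{t-t_0}K_i(t)$ is the standard way to finish (b).

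There is, however, a small logical gap in your argument for (a) as written. You deduce that $\I_t^0\cap\I_{t_0}^0=\{0\}$ for $t$ near $t_0$, $t\neq t_0$, and then say ``combined with semicontinuity of $\dim\I(t)$ this \ldots\ finishes discreteness.'' But semicontinuity only gives $\dim\I_t^0\le d$, and there is no a~priori reason a $d$-dimensional $\I_t^0$ transverse to $\I_{t_0}^0$ cannot occur for $t$ arbitrarily close to $t_0$. What actually rules this out is precisely the directness step you prove in (b): once you know $\I(t_0)\oplus D(\I_{t_0}^0)(t_0)$ is a direct sum, your rescaled frame $\sigma_1,\dots,\sigma_r$ (with $\sigma_i(t)=\tfrac{1}{t-t_0}K_i(t)$ for $i\le d$ and $\sigma_i=J_i$ for $i>d$) is linearly independent at $t_0$, hence on a neighborhood, hence --- undoing the rescaling for $t\neq t_0$ --- the full basis $J_1(t),\dots,J_r(t)$ of $\I$ is linearly independent at every nearby $t\neq t_0$, forcing $\I_t^0=\{0\}$. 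So (a) really follows \emph{from} the frame argument in (b), not from semicontinuity; you have all the ingredients, just reorder them so the isotropy/directness computation comes first and drives both conclusions.
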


\begin{proof} \quad
The proof is similar to the proof presented in Lemma 3.3 of \cite{Alexandrino-Alves-Javaloyes-Equifocal}.
\end{proof}
\vspace{0.25\baselineskip}

Following the previous lemma, given an isotropic subspace $\I$, the set
\begin{equation} \label{regular-instants}
I_\I := \{t \in I \;|\; \I_{t}^{0} = \{0\}\}
\end{equation}
will be called the set of $\I$-\emph{regular instants} and in a logical contrast his complement will be called the set of $\I$-\emph{singular instants} (item \ref{properties-isotropic-c}).

Furthermore the item \ref{properties-isotropic-d} of the lemma associates $\I$ to a vector subbundle $V^{\I} \subset E$ which will be called the \emph{vertical subbundle} (associated with $\I$) and its orthogonal complement $H^{\I}$ will be called \emph{horizontal subbundle} (associated with $\I$). The names of this subbundles are inspired by Example \ref{Finsler-Submersions-Example}.
When these is no risk of confusion, the spaces $V^{\I}$ and $H^{\I}$  will be simply  denoted by $V$ and $H$, respectively.

It is immediate from the item \ref{properties-isotropic-d} that for each $t \in I_\I$ we have $\I(t) = V_t$. In particular for any Lagrangian subspace $\L $ we have $\L(t) = E_t$ for all $t \in I_\L$ by dimension issues.

Relatively to the decomposition $E = V \oplus H$ we denote the horizontal projection by $\pi_H$ and the horizontal components of the operators $D$ and $R$ by $D_H$ and $R_H:= (R|_{H})^{H}$ i.,e the $H$-component of the restriction of $R$
to $H$. To deal with mixed components of the covariant derivative we define the tensor $\mathbb{A}: \Gamma(E) \to \Gamma(E)$ as
\begin{equation*}
\mathbb{A} (X) := (D X^V)^H + (D X^H)^V;
\end{equation*}
c.f. the definition of  O'Neill tensor in Example \ref{Finsler-Submersions-Example}.

It is straightforward to see that $\mathbb{A}$ is in fact a $C^\infty(I)$-homomorphism. Also $\mathbb{A}$ has the remarkable property that $\mathbb{A}_V = - \mathbb{A}_{H}^{\ast}$ which is proved in \cite{Alexandrino-Alves-Javaloyes-Equifocal} and as a consequence 
\begin{equation}
\label{eq-propriedades-tensor-A-Aadjunto}
\mathbb{A}^2 |_H = \mathbb{A}_V \mathbb{A}_H = - \mathbb{A}_{H}^{\ast} \mathbb{A}_H 
\end{equation}
is self-adjoint nonpositive operator.

\begin{proposition}[transverse Jacobi equation] \label{transverse-Jacobi-fields}
Let $\I \subset \J$ be an isotropic subspace. Then
\begin{enumerate}
\item \label{transverse-Jacobi-equation-a}
$(H, D_H, R_H - 3 \mathbb{A}^2 |_{H})$ is a Jacobi triple. 
\item \label{transverse-Jacobi-equation-b}
$\Ker (\pi_{\mathrm{H}} |_{\I^\omega}) = \I$ and $\pi_{\mathrm{H}} (\I^{\omega}) = \J_{\I}$,
where $\J_{\I}$ is the space of Jacobi fields associated to $(H, D_H, R_H - 3 \mathbb{A}^2 |_{H})$. 
\item \label{transverse-Jacobi-equation-c}
$\pi_{\mathrm{H}}: \I^\omega / \I \longrightarrow \J_{\I}$ is a symplectic isomorphism.
\end{enumerate}
\end{proposition}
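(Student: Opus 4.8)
The plan is to mimic the classical O'Neill/Wilking computation, now in the algebraic language of Jacobi triples. First I would verify \ref{transverse-Jacobi-equation-a}: the only nontrivial point is that $R_H - 3\mathbb{A}^2|_H$ is a self-adjoint $C^\infty(I)$-homomorphism of $H$. Self-adjointness of $R_H$ is immediate from self-adjointness of $R$, and self-adjointness of $\mathbb{A}^2|_H = -\mathbb{A}_H^\ast\mathbb{A}_H$ is Eq.\eqref{eq-propriedades-tensor-A-Aadjunto}. That $D_H := \pi_H\circ D\circ(\text{inclusion})$ is a metric covariant derivative on $(H,\langle\cdot,\cdot\rangle)$ follows by restricting the metric compatibility of $D$ and projecting; this is a routine check. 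So the content of \ref{transverse-Jacobi-equation-a} is just bookkeeping.

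The heart is \ref{transverse-Jacobi-equation-b}. For the inclusion $\I\subset\Ker(\pi_H|_{\I^\omega})$ and the reverse: on the set $I_\I$ of $\I$-regular instants we have $\I(t)=V_t$ by Lemma \ref{properties-isotropic}\ref{properties-isotropic-d}, so any $J\in\I^\omega$ with $\pi_H J\equiv 0$ is pointwise vertical on $I_\I$, hence everywhere (closure), hence lies in $V^\I$ pointwise with $DJ$ also controlled; a standard argument (using that $\I^\omega$ consists exactly of the Jacobi fields symplectically orthogonal to all of $\I$, together with the structure of $V^\I$ as $\I(t)\oplus D(\I^0_t)(t)$) forces $J\in\I$. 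For $\pi_H(\I^\omega)=\J_\I$: given $J\in\I^\omega$, I would write $J = J^V + J^H$ and substitute into the Jacobi equation $D^2J + RJ = 0$. Decomposing into vertical and horizontal parts and using the definition of $\mathbb{A}$ to package the mixed covariant-derivative terms, the horizontal part yields an equation of the form $D_H^2(J^H) + (R_H - 3\mathbb{A}^2|_H)(J^H) = 0$ after one uses the "Riccati"-type relation that the vertical component $J^V$ of a field in $\I^\omega$ is determined by a first-order equation (the generalized shape-plus-$A$ operator, exactly as in the defining equation of $\J_\gamma^{\mathrm{proj}}$ in Example \ref{Finsler-Submersions-Example}). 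This is where the coefficient $-3\mathbb{A}^2|_H$ is produced: two units come from substituting $DJ^V$ back in and one from the curvature mixing, just as in Wilking. Conversely, a solution of the transverse equation on $H$ can be completed to a genuine Jacobi field in $\I^\omega$ by solving the vertical first-order equation with the appropriate initial condition, giving surjectivity.

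Finally \ref{transverse-Jacobi-equation-c} is essentially formal once \ref{transverse-Jacobi-equation-b} is in hand: $\pi_H$ descends to $\I^\omega/\I$ because $\Ker(\pi_H|_{\I^\omega})=\I$, it is a linear isomorphism onto $\J_\I$ by the rank count ($\dim\I^\omega/\I = 2n - 2\dim\I = 2\,\rank H$), and it is symplectic because the canonical symplectic form on $\I^\omega/\I$ (the symplectic reduction of $\omega$) pulls back from the canonical form $\omega_{\J_\I}(J_1,J_2)=\langle D_H J_1,J_2\rangle - \langle J_1,D_H J_2\rangle$; checking this identity reduces to observing that the vertical contributions to $\omega(J_1,J_2)$ for $J_1,J_2\in\I^\omega$ vanish modulo $\I$, which follows from the first-order relation satisfied by the vertical components together with $\mathbb{A}_V=-\mathbb{A}_H^\ast$. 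I expect the main obstacle to be the explicit decomposition of the Jacobi equation along $E=V\oplus H$ and correctly tracking the terms involving $D$ of the (nonparallel) splitting so that the $\mathbb{A}$-tensor absorbs exactly the mixed pieces and the coefficient $3$ emerges; this is the one genuinely computational step, and it is the Finslerian analogue of the classical transverse Jacobi equation derivation, so I would follow \cite{Wilking-duality} and \cite{Alexandrino-Alves-Javaloyes-Equifocal} closely there.
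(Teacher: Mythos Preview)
Your overall strategy is sound and would lead to a correct proof, but it diverges from the paper's at the two substantive steps, and in one place your sketch is vague exactly where the content lies.

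For $\Ker(\pi_H|_{\I^\omega})=\I$: your phrase ``hence lies in $V^\I$ pointwise with $DJ$ also controlled; a standard argument\dots forces $J\in\I$'' is not an argument. Being everywhere vertical and lying in $\I^\omega$ does not by itself exhibit $J$ as an element of the abstract subspace $\I$. The paper's proof is a one-line dimension count: if $J\in\I^\omega$ has $J^H\equiv 0$, then $\widehat{\I}:=\I+\mathbb{R}J$ is still isotropic, and at a common regular instant $t$ one has $\widehat{\I}(t)\subset V_t=\I(t)$, so $\dim\widehat{\I}=\dim\widehat{\I}(t)\le\dim\I(t)=\dim\I$, forcing $J\in\I$.

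For $\pi_H(\I^\omega)=\J_\I$: the rank count $\dim(\I^\omega/\I)=2\operatorname{rank}H=\dim\J_\I$ already gives surjectivity once the inclusion $\pi_H(\I^\omega)\subset\J_\I$ is known, so your proposed constructive completion of a transverse solution to a full Jacobi field is unnecessary. For the inclusion itself, you propose to decompose $D^2J+RJ=0$ globally along $E=V\oplus H$ and eliminate $J^V$ via a first-order ``projectable-type'' relation. Such a relation does exist at regular times (it follows from $\omega(J,\tilde J)=0$ for $\tilde J\in\I$ and reads $(DJ)^V=S^{\I}_V J^V-\mathbb{A}_H J^H$, with $S^{\I}_V$ the vertical Riccati operator of $\I$), and substituting it back together with the Riccati equation for $S^{\I}_V$ does eventually produce the coefficient $-3$. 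The paper avoids all of this with a pointwise trick: at each regular $t_0$, replace $J$ by a representative with $J(t_0)\in H_{t_0}$ (subtract $\tilde J\in\I$ with $\tilde J(t_0)=J^V(t_0)$), pick a $D_H$-parallel test section $X\in\Gamma(H)$, and compute $\langle D_H^2 J^H,X\rangle(t_0)=\langle J,X\rangle''(t_0)$ directly from the Jacobi equation for $J$. Because $J^V(t_0)=0$, the vertical contributions vanish at that instant and the $-3\mathbb{A}^2|_H$ term falls out of two short identities (one for $\langle J,D^2X\rangle$ and one for $\langle (DJ)^V,v\rangle$). This verifies the transverse equation at every regular $t_0$, hence everywhere by density.

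For \ref{transverse-Jacobi-equation-c} the paper uses the same device: choose representatives with $J_i(t_0)\in H_{t_0}$ and $D_H$-parallel $X_i$, then $\omega(J_1,J_2)=(\langle J_1^H,X_2\rangle-\langle X_1,J_2^H\rangle)'(t_0)=\omega_\I(J_1^H,J_2^H)$ in two lines. Your route through the first-order relation and $\mathbb{A}_V=-\mathbb{A}_H^\ast$ also works but carries more bookkeeping.

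In short: your plan is a legitimate alternative (closer to a direct O'Neill-type calculation), but the paper's representative-at-a-regular-time plus $D_H$-parallel-test-section technique is what actually makes the computation short; it bypasses both the global first-order equation for $J^V$ and the Riccati equation for the vertical shape operator.
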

\begin{proof} The item \ref{transverse-Jacobi-equation-a} is immediate, once $\mathbb{A}^2 |_{H}$ is self-adjoint. Then we proceed with the proof of itens \ref{transverse-Jacobi-equation-b} and \ref{transverse-Jacobi-equation-c}.
\begin{enumerate}
\item[(b)] First we are going to prove that $\Ker (\pi_{\mathrm{H}} |_{\I^\omega}) = \I$. It is easily to check that $\Ker (\pi_{\mathrm{H}} |_{\I^\omega}) \supset \I$ so we are going to concentrate in prove that $\Ker (\pi_{\mathrm{H}} |_{\I^\omega}) \subset \I$. Given $[J] \in \Ker (\pi_{\mathrm{H}} |_{\I^\omega})$ the vector subspace $\widehat{\I} = \I + \mathbb{R} J$ is isotropic (since $J\in \I^\omega$ )
 and for any $\widehat{\I}, \I$-regular $t \in I$
\begin{equation}
\label{eq-J-em-I}
\dim (\I + \mathbb{R} J)  \stackrel{(*)}{=} \dim (\widehat{\I} (t)) \stackrel{(**)}{=} \dim (\I (t)) \stackrel{(*)}{=} \dim (\I)
\end{equation}
where the  equality ($\ast $) follows from the fact that $t$ is regular and the equality ($\ast \ast$) follows from the fact that
$J\in \mathrm{Ker} (\pi_{H})$, i.e, $J(t)$ is vertical. Eq. \eqref{eq-J-em-I}  implies that $J \in \I$.

\vspace{0.25\baselineskip}

Now we are going to prove that $\pi_{\mathrm{H}} (\I^{\omega}) = \J_{\I}$. 
Note that it suffices to prove that $\pi_{\mathrm{H}} (\I^{\omega}) \subset \J_{\I}$ since
\begin{equation*}
\rank (\pi_{\mathrm{H}} |_{\I^\omega}) = \dim (\I^\omega) - \dim (\Ker (\pi_{\mathrm{H}} |_{\I^\omega})) = 2 \dim (H).
\end{equation*}
Also note that:

\vspace{\baselineskip}
\textbf{Claim} Given $\widehat{J}\in \I^{\omega}$ there exists
$J\in \I^{\omega}$ so that
\begin{enumerate}
\item[(1)] $J(t_0)= \widehat{J}^{H}(t_0)$, where $t_0$ is a $\I$-regular  time. 
\item[(2)] $[J]=[\widehat{J}]$, i.e., $\pi_{H}(J)=\pi_{H}(\widehat{J}).$
\end{enumerate}
\vspace{0.25\baselineskip}
In fact, since $t_0$ is $\I$-regular, we have
$\I (t_0)=  V_{t_0}$ and
there exists $\widetilde{J}\in \I$
such that  $\widetilde{J}(t_0)=\widehat{J}^{V}(t_0).$ 
Set $J:=\widehat{J}-\widetilde{J}\in \I^{\omega}$. 
Clearly $[J]=[\widehat{J}]$ and $J(t_0)=\widehat{J}(t_0)-\widetilde{J}(t_0)=\widehat{J}^{H}(t_0)$,
and this conclude the proof of the claim.

\

\vspace{0.25\baselineskip}

Fix a $\I$-regular $t_0 \in I$, $u_{t_0} \in H_{t_0}$, $J \in \I^\omega$ such that $J (t_0) \in H_{t_0}$, 
$X \in \Gamma (H)$ $D_H$-parallel such that $X(t_0) = u_{t_0}.$ 
Due to the above claim, in order to prove that $\pi_{\mathrm{H}} (\I^{\omega}) \subset \J_{\I}$ it suffices 
to prove  Eq.\eqref{eq-goal-jacobi-transverse-Marcelo} below.   
\begin{equation}
\label{eq-goal-jacobi-transverse-Marcelo}
\Scal{D_{H}^{2} J^H(t_0) }{u_{t_0}}= - \Scal{(R_H - 3 \mathbb{A}^2 |_{H}) J (t_0)}{u_{t_0}}. 
\end{equation}

Let us accept for a moment the following two equations that we are going  check later:
\begin{equation}
\label{eq-1-auxiliar-jacobi-transverse-Marcelo}
\langle J(t_0), D^{2}X(t_0)\rangle= 
\langle J(t_0), \mathbb{A}^{2}(X)(t_0)\rangle.
\end{equation}
\begin{equation}
\label{eq-2-auxiliar-jacobi-transverse-Marcelo}
\langle (DJ)^{V}(t_0), v\rangle = 
\langle \mathbb{A}^{*}(J^{H}(t_0)),v\rangle \, \forall v\in V_{t_0}.
\end{equation}

Replacing Eq.\eqref{eq-1-auxiliar-jacobi-transverse-Marcelo} and 
Eq.\eqref{eq-2-auxiliar-jacobi-transverse-Marcelo} in the equation below (evaluated at $t=t_0$)
 we conclude the desired Eq.\eqref{eq-goal-jacobi-transverse-Marcelo}. 
\begin{align*}
\Scal{D_{H}^{2} J^H }{X}&  = \Scal{J^H}{X}'' \\
         & = \Scal{J}{X}''\\
 &  = \Scal{D^2 J }{X} + 2 \Scal{D J}{D X} + \Scal{J}{D^2X } \\
& = \Scal{-R J }{X} + 2 \Scal{D J}{(D X)^{V}} + \Scal{J}{D^2X }\\
& = \Scal{(-R J)^{H} }{X} + 2 \Scal{(D J)^{V}}{(\mathbb{A}_{H} X)} + \Scal{J}{D^2X }.
\end{align*}

We now check Eq.\eqref{eq-1-auxiliar-jacobi-transverse-Marcelo}. 
\begin{align*}
 \langle J(t_0),D^{2}X(t_0)\rangle & = 
 \langle J(t_0),(D^{2}X)^{H}(t_0)\rangle\\
& = \langle J(t_0),(D(DX)^{V})^{H}(t_0)\rangle\\
& = \langle J(t_0),\mathbb{A}(DX)^{V}(t_0) \rangle\\
& = \langle J(t_0),\mathbb{A}\mathbb{A} X (t_0) \rangle
\end{align*}


Finally we  check Eq.\eqref{eq-2-auxiliar-jacobi-transverse-Marcelo}. Consider 
$\widetilde{J}\in \I$ so that $\widetilde{J}(t_0)=v\in V_{t_0}$
\begin{align*}
\langle (DJ)^{V}(t_0),  \widetilde{J} (t_0)\rangle
& =  \langle DJ(t_{0}), \widetilde{J}(t_0) \rangle\\
& =  \langle J(t_0), (D\widetilde{J})^{H}(t_0)\rangle \\
& =  \langle J(t_0), \mathbb{A}\widetilde{J}(t_0)\rangle\\
& =  \langle \mathbb{A}^{*} J(t_0), \widetilde{J}(t_0)\rangle\\
& =  \langle \mathbb{A}^{*} J(t_0)^{H}, \widetilde{J}(t_0) \rangle.
\end{align*}


\vspace{0.5\baselineskip}

\item[(c)] Fix a $\I$-regular $t_0 \in I$, $[J_1], [J_2] \in \I^\omega$ such that $J_1 (t_0), J_2 (t_0) \in H_{t_0}$ and $X_1, X_2 \in \Gamma (H)$ $D_H$-parallel such that $X_i (t_0) = J_i (t_0)$ for $i = 1, 2$. Then
\begin{align*}
\omega ([J_1], [J_2]) &= \Scal{D J_1 (t_0)}{X_2 (t_0)} - \Scal{X_1 (t_0)}{D J_2 (t_0)} \\
&= \left( \Scal{J_{1}^{H}}{X_2} - \Scal{X_1}{J_{2}^{H}} \right)'(t_0) \\
&= \Scal{D_H J_{1}^{H} (t_0)}{X_2 (t_0)} - \Scal{X_1 (t_0)}{D_H J_{2}^{H} (t_0)} \\
&= \omega_{\I} (J_{1}^{H}, J_{2}^{H}),
\end{align*}
where $\omega_{\I}$ is the symplectic form associated to $(H, D_H, R_H - 3 \mathbb{A}^2 |_{H})$.
\end{enumerate}
\end{proof}

The space $\J_\I$ of Jacobi fields associated with the Jacobi triple $(H, D_H, R_H - 3 \mathbb{A}^2 |_{H})$ will be called the 
\emph{space of $\I$-transverse Jacobi fields.} As well as $\J$, the set of transverse Jacobi fields possess a symplectic form $\omega_\I$ (see Section \ref{Jacobi-triples-Jacobi-equation}). It is quite usefull to mention that the Lagrangian subspaces of $(\J_\I, \omega_\I)$ has a nice description which relates them to the Lagrangian subspaces of $\J$. This is the content of the subsequent corollary which follows as a consequence of the item \ref{transverse-Jacobi-equation-c} of the previous proposition.

\begin{corollary}[transverse Lagrangian subspaces] \label{transverse-Lagrangian-subspaces}
The map
\begin{equation*}
\{\L \in \Lambda (\J) \;|\; \I \subset \L\} \ni \L \longrightarrow \pi_H (\L / \I) \in \Lambda (\J_{\I})
\end{equation*}
is a bijection.
\end{corollary}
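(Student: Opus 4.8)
The plan is to exhibit the explicit two-sided inverse of the stated map, using the symplectic isomorphism $\pi_H : \I^\omega/\I \to \J_\I$ from Proposition \ref{transverse-Jacobi-fields}\ref{transverse-Jacobi-equation-c}. First I would recall the general linear-symplectic fact: if $(V,\omega)$ is a symplectic vector space and $W \subset V$ is an isotropic subspace, then $\Phi : \mathcal{L} \mapsto (\mathcal{L}\cap W^\omega)/W = \pi_{W}(\mathcal{L}/W)$ — here I am using that $W \subset \mathcal{L}$ forces $\mathcal{L} \subset \mathcal{L}^\omega \subset W^\omega$, so $\mathcal{L}/W$ already lives inside $W^\omega/W$ — is a bijection from $\{\mathcal{L} \in \Lambda(V) : W \subset \mathcal{L}\}$ onto $\Lambda(W^\omega/W)$, with inverse sending a Lagrangian $\overline{\mathcal{L}} \subset W^\omega/W$ to the preimage $q^{-1}(\overline{\mathcal{L}})$ under the quotient map $q : W^\omega \to W^\omega/W$. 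Applying this with $V = \J$, $\omega$ the canonical symplectic form on $\J$, and $W = \I$, one gets a bijection $\{\L \in \Lambda(\J) : \I \subset \L\} \to \Lambda(\I^\omega/\I)$. Composing with the symplectic isomorphism $\pi_H : \I^\omega/\I \to \J_\I$, which carries Lagrangians of $\I^\omega/\I$ to Lagrangians of $(\J_\I,\omega_\I)$, yields exactly the claimed bijection $\L \mapsto \pi_H(\L/\I)$.

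Concretely, the steps would be: (i) verify that for $\L \in \Lambda(\J)$ with $\I \subset \L$, the image $\L/\I$ is a Lagrangian subspace of $(\I^\omega/\I, \bar\omega)$ — isotropy is clear since $\L$ is isotropic and $\I \subset \L$; for maximality, compute $\dim(\L/\I) = \dim\L - \dim\I = n - \dim\I$ where $2n = \dim\J$, while $\dim(\I^\omega/\I) = (2n - \dim\I) - \dim\I = 2(n - \dim\I)$, so $\L/\I$ has exactly half the dimension; (ii) conversely, given a Lagrangian $\mathcal{M} \subset \I^\omega/\I$, set $\L := q^{-1}(\mathcal{M}) \subset \I^\omega \subset \J$, check $\I \subset \L$, check $\L$ isotropic (two representatives pair via $\omega$ exactly as their classes pair via $\bar\omega$, which vanishes since $\mathcal{M}$ is isotropic), and check $\dim\L = \dim\mathcal{M} + \dim\I = (n - \dim\I) + \dim\I = n$, so $\L$ is Lagrangian; (iii) observe these two assignments are mutually inverse, which is immediate from $q^{-1}(q(\L)) = \L$ (using $\I \subset \L$) and $q(q^{-1}(\mathcal{M})) = \mathcal{M}$; (iv) finally transport everything through the symplectic isomorphism $\pi_H$ of Proposition \ref{transverse-Jacobi-fields}\ref{transverse-Jacobi-equation-c}, noting that a symplectic isomorphism induces a bijection between the respective Lagrangian Grassmannians, and that under this identification $\Phi$ becomes precisely $\L \mapsto \pi_H(\L/\I)$.

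I do not expect a serious obstacle here: the statement is essentially the abstract symplectic-reduction correspondence for Lagrangians packaged through the already-established isomorphism $\I^\omega/\I \cong \J_\I$. The only mild care needed is bookkeeping of dimensions (to promote "isotropic" to "Lagrangian" on both sides) and making sure the map $\L \mapsto \pi_H(\L/\I)$ really coincides with the composite $\pi_H \circ (\text{reduction})$ rather than something off by the quotient; this is just unwinding definitions. If one prefers to avoid invoking the abstract reduction lemma as a black box, the whole argument can be written out directly in the four steps above in a few lines, since each verification is a one-line dimension count or a one-line symplectic-form computation using that $\omega([J_1],[J_2]) = \omega(J_1,J_2)$ for $J_1,J_2 \in \I^\omega$.
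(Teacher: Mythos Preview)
Your proposal is correct and follows the same route as the paper: the paper states this corollary as an immediate consequence of item~\ref{transverse-Jacobi-equation-c} of Proposition~\ref{transverse-Jacobi-fields} (the symplectic isomorphism $\pi_H:\I^\omega/\I\to\J_\I$) without writing out any further argument. You have simply unpacked the standard symplectic-reduction correspondence for Lagrangians that the paper leaves implicit, so your write-up is more detailed than the paper's but identical in substance.
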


\begin{remark}
A geometric view of the transverse Jacobi vector fields, in a particular case, could be drawn from Example \ref{Finsler-Submersions-Example}. Given a Finsler submersion $\pi: M \to B$ and a horizontal geodesic $\gamma$ it was presented in that example a isomorphism between the symplectic reduction $\J_{\gamma}^{\proj} / \J_{\gamma}^{\hol}$ (quotient between projectable Jacobi fields and holonomy Jacobi fields) and the space $\J_{\pi \circ \gamma}$ of Jacobi fields along the projected geodesic. Then from item \ref{transverse-Jacobi-equation-c} of the previous proposition, the space of $\J_{\gamma}^{\hol}$-transverse Jacobi fields is isomorphic to the $\J_{\pi \circ \gamma}$.
\end{remark}

\vspace{0.25\baselineskip}

\subsubsection{Riccati operators} \label{Riccati-operators}

Let $\L \subset \J$ be a Lagrangian subspace. Then for each $t \in I_\L$ the linear operator $S_{t}^{\L} : E_t \to E_t$ given by $S_{t}^{\L} (u_t) := D J(t)$, where $J \in \L$ is such that $J(t) = u_t$, is well defined and self-adjoint, see \cite{Gromoll-Walschap}. 
Therefore it induces a $C^\infty (I_\L)$-endomorphism in $\Gamma (E_{I_\L})$, the \emph{Riccati operator} (associated with $\L$), which will be denoted by $S^\L$.

\vspace{0.15\baselineskip}
\begin{lemma} \label{Lagrangian-spaces-Riccati-operators}. Assume that $I_\L = \R$.
Let $\xi \in \mathbb{O}(E)$ be a $D$-parallel orthonormal frame. Then
\begin{enumerate}
\item \label{Lagrangian-spaces-Riccati-operators-a}
$[S^\L]_\xi$ is a solution of the Riccati differential equation (in the space of symmetric matrices $\mathbb{M}_{n}^{\sym} (\R)$)
\begin{equation} \label{Riccati-equation}
X' + X^2 + [R]_\xi = 0.
\end{equation}
Moreover $\Ker (S^\L-D) = \L$.

\vspace{0.25\baselineskip}

\item \label{Lagrangian-spaces-Riccati-operators-b}
Given $X: I_X \subset I \to \mathbb{M}_{n}^{\mathrm{\hspace{0.25ex}sym\hspace{0.1ex}}} (\mathbb{R})$ a solution of Eq. \ref{Riccati-equation} 
and denoting by $S^X$ the $C^\infty (I_X)$-endomorphism in $\Gamma (E |_{I_X})$ characterized by $[S^X]_\xi = X$, the subspace $\L := \Ker (S^X - D) \subset \J$ is Lagrangian and $S^\L = S^X$.
\end{enumerate}
\end{lemma}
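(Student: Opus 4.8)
The plan is to prove (a) and (b) as the two directions of the classical correspondence between Lagrangian subspaces of $\J$ and solutions of the Riccati equation, the bridge being that a Lagrangian $\L$ with $I_\L = \R$ satisfies $\L(t) = E_t$ for every $t$ (as recalled before the statement, from $\dim\L = \frac12\dim\J = \rank E$ and $\L_t^0 = \{0\}$). Granting, as recalled there (see \cite{Gromoll-Walschap}), that $S^\L_t(u_t) := DJ(t)$ (for $J \in \L$ with $J(t) = u_t$) is well defined — because $\L_t^0 = \{0\}$ — and self-adjoint — because $0 = \omega(J_1,J_2) = \scal{DJ_1}{J_2} - \scal{J_1}{DJ_2} = \scal{S^\L J_1(t)}{J_2(t)} - \scal{J_1(t)}{S^\L J_2(t)}$ while $\L(t) = E_t$ — I would start from the tautology that every $J \in \L$ solves the first-order section equation $DJ = S^\L J$.

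For (a) I would differentiate this once more using the Leibniz rule for the covariant derivative induced by $D$ on $\mathrm{End}(E)$, namely $(DS^\L)(Y) := D(S^\L Y) - S^\L(DY)$, obtaining $D^2 J = (DS^\L)J + (S^\L)^2 J$; comparing with the Jacobi equation $D^2 J = -RJ$ gives $\big((DS^\L) + (S^\L)^2 + R\big)J = 0$ for all $J \in \L$, hence the operator identity $DS^\L + (S^\L)^2 + R = 0$ since $\L(t) = E_t$. Reading this off in the $D$-parallel orthonormal frame $\xi$ — where $[DS^\L]_\xi = \frac{d}{dt}[S^\L]_\xi$ because $D\xi_i = 0$, $[(S^\L)^2]_\xi = [S^\L]_\xi^2$, and $[S^\L]_\xi$ is symmetric by self-adjointness and orthonormality of $\xi$ — yields precisely Eq.\eqref{Riccati-equation}. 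For the identity $\Ker(S^\L - D) = \L$: the inclusion $\supseteq$ is the definition of $S^\L$, and for $\subseteq$ I would note that any $J$ with $DJ = S^\L J$ is itself a Jacobi field (apply the Riccati identity to get $D^2 J = -RJ$) and then agrees with the unique $\widetilde J \in \L$ having $\widetilde J(t_0) = J(t_0)$ in both value and $D$-derivative at a point $t_0$, so $J = \widetilde J \in \L$; equivalently, the linear system $DJ = S^\L J$ has solution space of dimension $\rank E = \dim\L$, which already contains $\L$.

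For (b), since $X$ is symmetric and $\xi$ orthonormal, $S^X$ is self-adjoint. I would identify $\L = \Ker(S^X - D)$ with the solution space of $DJ = S^X J$ on $I_X$, of dimension $\rank E = \frac12\dim\J$; each such $J$ lies in $\J$ because in the frame $\xi$ one computes $[D^2 J]_\xi = [(DS^X) + (S^X)^2]_\xi[J]_\xi = (X' + X^2)[J]_\xi = -[R]_\xi[J]_\xi$ using that $X$ solves \eqref{Riccati-equation}, i.e. $D^2 J + RJ = 0$. Self-adjointness of $S^X$ gives $\omega(J_1,J_2) = \scal{S^X J_1}{J_2} - \scal{J_1}{S^X J_2} = 0$ for $J_1, J_2 \in \L$, so $\L$ is isotropic of half dimension, hence Lagrangian. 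Finally, a nonzero $J \in \L$ cannot vanish at any $t \in I_X$ (uniqueness for the ODE), so $I_X \subseteq I_\L$, and for $t \in I_X$ and $u_t \in E_t$, choosing $J \in \L$ with $J(t) = u_t$ gives $S^\L_t(u_t) = DJ(t) = S^X_t(J(t)) = S^X_t(u_t)$, i.e. $S^\L = S^X$ on $I_X$.

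The step I expect to require the most care is not deep but structural: making the induced covariant derivative of the endomorphism field $S^\L$ (resp. $S^X$) precise and checking that in a $D$-parallel frame it is represented by the ordinary entrywise derivative of the matrix, since this is exactly what turns the intrinsic operator identity into the scalar Riccati ODE \eqref{Riccati-equation}. The one genuinely substantive point is the verification in (b) that solutions of the first-order equation $DJ = S^X J$ are automatically $(E,D,R)$-Jacobi fields — this is where the hypothesis that $X$ solves the Riccati equation (rather than being an arbitrary symmetric path) is used, and it is what makes the correspondence $\L \leftrightarrow X$ two-sided.
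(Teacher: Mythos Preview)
Your proposal is correct and follows essentially the same route as the paper: deriving Eq.~\eqref{Riccati-equation} by differentiating the tautology $DJ = S^\L J$ and invoking $\L(t) = E_t$, and proving (b) by combining self-adjointness of $S^X$ (isotropy) with the dimension count $\dim\Ker(S^X - D) = \rank E$ (Lagrangian). If anything, you are more thorough: the paper's proof of (b) does not explicitly verify that $\Ker(S^X - D) \subset \J$ nor that $S^\L = S^X$, both of which you spell out.
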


\begin{proof} \quad
\begin{enumerate}
\item[(a)] It is immediate to see that $S^{L} = D$ in $\L$ from which follows that
\begin{equation*}
- [R]_\xi [J]_\xi = [D^2 J]_\xi = [D (S J)]_\xi = [S]_{\xi}^{'} [J]_\xi + [S]_\xi [DJ]_\xi.
\end{equation*}
Then the fact that $\L(t) = E_t$ for all $t \in I_\L$ and the item \ref{properties-isotropic-c} of Lemma \ref{properties-isotropic} concludes the proof of this item.

\vspace{0.25\baselineskip}

\item[(b)] Since $X$ is symmetric we have that $S^X$ is self-adjoint so $\Ker (S^X - D)$ is isotropic. Furthermore $\dim (\Ker (S^X - D)) = \rank (E)$ (since $S^X -D$ is a differential operator of order $1$). Then  $\Ker (S^X - D)$ is in fact Lagrangian.
\end{enumerate}
\end{proof}

\begin{corollary} \label{Riccati-Equation-Real}. Assume that $I_\L = \R$. 
The function $I_\L \ni t \to \tr (S_{t}^{\L}) \in \R$ is a solution for the following Riccati equation
\begin{equation*}
x' + x^2 + r = 0
\end{equation*}
where $r: I_\L \subset \R \to \R$ is given by $r = \tr (R) + (\tr ({S^\L}^2) - \tr (S^\L)^2)$.
\end{corollary}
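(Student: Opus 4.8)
The plan is simply to take the trace of the matrix Riccati equation furnished by Lemma~\ref{Lagrangian-spaces-Riccati-operators}. Fix a $D$-parallel orthonormal frame $\xi\in\mathbb{O}(E)$ over $I_\L=\R$ and set $X:=[S^\L]_\xi\colon\R\to\mathbb{M}_{n}^{\sym}(\R)$. By item~\ref{Lagrangian-spaces-Riccati-operators-a} of that lemma, $X$ is a global solution of
\begin{equation*}
X' + X^2 + [R]_\xi = 0 .
\end{equation*}

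First I would apply the (linear) trace functional to this identity. Since the frame $\xi$ is $D$-parallel, the equation above is an identity of matrix-valued functions with $X'$ the entrywise derivative, so $\tr(X')=(\tr X)'$; writing $x(t):=\tr(S^\L_t)=\tr X(t)$ one gets
\begin{equation*}
x' + \tr(X^2) + \tr([R]_\xi) = 0 .
\end{equation*}
Next I would rewrite the last two terms intrinsically: because $\xi$ is orthonormal, $\tr([R]_\xi)=\tr(R)$, and likewise $\tr(X^2)=\tr\big((S^\L)^2\big)$, where $(S^\L)^2$ is the genuine self-adjoint $C^\infty(I_\L)$-endomorphism of $\Gamma(E_{I_\L})$ obtained by composing $S^\L$ with itself (all these traces being independent of the chosen orthonormal frame). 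This yields
\begin{equation*}
x' + \tr\big((S^\L)^2\big) + \tr(R) = 0 .
\end{equation*}

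Finally, to cast this in the stated Riccati form I would add and subtract $x^2=(\tr S^\L)^2$, obtaining
\begin{equation*}
x' + x^2 + \Big(\tr(R) + \tr\big((S^\L)^2\big) - (\tr S^\L)^2\Big) = 0 ,
\end{equation*}
which is exactly $x'+x^2+r=0$ with $r$ as in the statement; note $r$ is a well-defined smooth function on $I_\L=\R$ since $S^\L$ is smooth there. I do not anticipate a real obstacle in this argument: the only point needing a word of justification is the interchange of differentiation and trace, which is precisely what the $D$-parallelism of $\xi$ provides, together with the frame-independence of $\tr(R)$ and of $\tr\big((S^\L)^2\big)$.
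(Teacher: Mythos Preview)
Your argument is correct and is exactly the approach the paper intends: the statement is recorded as a corollary of Lemma~\ref{Lagrangian-spaces-Riccati-operators} with no separate proof, and taking the trace of the matrix Riccati equation in a $D$-parallel orthonormal frame, then adding and subtracting $(\tr S^\L)^2$, is precisely the one-line computation that justifies it.
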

\vspace{0.25\baselineskip}

The previous lemma creates, for a fixed $D$-parallel orthonormal frame $\xi$, a bijection between the Lagrangian Grassmannian $\Lambda (\J)$ (see Eq. \ref{Lagrangian-Grassmannian}) and the space of solutions of the Riccati differential equation $X' + X^2 + [R]_\xi$ in the space of real symmetric matrices.

Various comparison result related to this type of Riccati differential equation was presented in  \cite{Eschenburg-Heintze}. Here we states a more weak result which will be useful for the proof of Wilking's decomposition lemma (Lemma \ref{lemma-Wilking-decomposition}).

\vspace{0.15\baselineskip}
\begin{proposition} \label{Eschenburg-Heintze}
Let $\L \subset \J$ be a Lagrangian subspace. If $I_\L = \R$ and $\tr (R) \geq 0$, then $\tr (R) = 0$ and $S^\L$ is identically $0$.
\end{proposition}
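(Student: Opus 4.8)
The plan is to use the scalar Riccati comparison from Corollary~\ref{Riccati-Equation-Real} together with the completeness hypothesis $I_\L=\R$. Write $x(t):=\tr(S_t^\L)$, which by Corollary~\ref{Riccati-Equation-Real} satisfies
\begin{equation*}
x' + x^2 + r = 0, \qquad r = \tr(R) + \big(\tr({S^\L}^2) - \tr(S^\L)^2\big).
\end{equation*}
The first observation is that the Cauchy--Schwarz inequality for the trace inner product on symmetric $n\times n$ matrices gives $\tr(S^\L)^2 = \scal{S^\L}{I}^2 \le \tr({S^\L}^2)\cdot\tr(I) = n\,\tr({S^\L}^2)$; a cleaner bound suffices here, namely $\tr(S^\L)^2 \le n\,\tr({S^\L}^2)$, so that $\tr({S^\L}^2) - \tr(S^\L)^2 \ge (1-n)/n \cdot \ldots$ — more usefully, I would instead record only that $\tr({S^\L}^2)\ge \tfrac1n \tr(S^\L)^2$, hence $r \ge \tr(R) + \tfrac{1-n}{n}\,x^2$ is \emph{not} quite what I want; the sign that actually matters is that $x' = -x^2 - r \le -x^2 - \tr(R) + (\tr(S^\L)^2 - \tr({S^\L}^2))$, and the term $\tr(S^\L)^2 - \tr({S^\L}^2)$ can be bounded \emph{above} by $\tfrac{n-1}{n}x^2$. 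Combining, $x' \le -\tfrac1n x^2 - \tr(R) \le -\tfrac1n x^2$ using $\tr(R)\ge 0$.

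Now comes the standard blow-up argument for Riccati inequalities. From $x' \le -\tfrac1n x^2$ on all of $\R$: if $x(t_0)>0$ for some $t_0$, then $x$ stays positive for $t\le t_0$ and a backward comparison forces $x(t)\to+\infty$ in finite backward time, contradicting $I_\L=\R$; symmetrically, if $x(t_0)<0$ then $-\tfrac1n x^2 < 0$ keeps $x$ decreasing, and integrating $x'/x^2 \le -\tfrac1n$ forward shows $x$ reaches $-\infty$ in finite forward time — again contradicting completeness. (More precisely, from $x' \le -\tfrac1n x^2$, wherever $x\ne 0$ one has $(1/x)' = -x'/x^2 \ge 1/n$, so $1/x$ is strictly increasing at rate $\ge 1/n$ on any interval where $x$ has constant sign, which is incompatible with $1/x$ being defined and bounded away from a zero of $x$ for all time.) Therefore $x \equiv 0$ on $\R$. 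Feeding $x\equiv 0$ back into the scalar Riccati equation gives $r \equiv 0$, i.e. $\tr(R) + \tr({S^\L}^2) - \tr(S^\L)^2 = \tr(R) + \tr({S^\L}^2) = 0$. Since $\tr(R)\ge 0$ and $\tr({S^\L}^2)\ge 0$ (it is a sum of squares of eigenvalues), both vanish: $\tr(R)\equiv 0$ and $\tr({S^\L}^2)\equiv 0$, the latter forcing $S^\L \equiv 0$.

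The main obstacle is getting the sign bookkeeping in the estimate $x' \le -\tfrac1n x^2$ exactly right: one must check that the ``extra'' term $\tr(S^\L)^2 - \tr({S^\L}^2)$ appearing in $r$ has the favorable sign after being moved to the right-hand side of $x' = -x^2 - r$, so that it does not spoil the comparison with the autonomous equation $y' = -\tfrac1n y^2$. Once that inequality is secured, the completeness hypothesis $I_\L=\R$ does all the remaining work by ruling out finite-time blow-up of the scalar solution, and the conclusion $\tr(R)=0$, $S^\L\equiv 0$ follows by substituting back. I would also remark that this is precisely the Finslerian analogue of the Riemannian statement that on a line of nonnegative Ricci (here: $\tr R \ge 0$ along a complete geodesic) any Lagrangian family of Jacobi fields with no finite singular instants must be parallel and flat, which is the engine behind Proposition~\ref{lemma-Jacobi-Decomposition}.
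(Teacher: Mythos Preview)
Your argument is correct and follows the same Riccati blow-up strategy as the paper: pass to the scalar equation for $x=\tr(S^\L)$, use completeness $I_\L=\R$ to rule out finite-time blow-up and force $x\equiv 0$, then substitute back to obtain $\tr(R)=0$ and $\tr({S^\L}^2)=0$, hence $S^\L\equiv 0$. Your careful derivation of $x'\le-\tfrac1n x^2$ via the Cauchy--Schwarz bound $\tr(S^\L)^2\le n\,\tr({S^\L}^2)$ is in fact the right way to handle the ``sign bookkeeping'' you flagged; the paper's intermediate assertion that ``in general $\tr({S^\L}^2)\ge\tr(S^\L)^2$'' is false for $n\ge 2$ (take $S=I_n$), so your version is the cleaner one, though both arrive at the same conclusion by the same mechanism.
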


\begin{proof}  For the sake of completeness, let us briefly review the 
  idea of the proof extracted  from the  proof of   Theorem 1.7.1 of \cite{Gromoll-Walschap}.

\vspace{0.15\baselineskip}

Define $r: I_\L \subset \R \to \R$ by $r = \tr (R) + (\tr ({S^\L}^2) - \tr (S^\L)^2)$, as well as in Corollary \ref{Riccati-Equation-Real}. Since $\tr (R) \geq 0$ (by hypothesis) and in general $\tr ({S^\L}^2) \geq \tr (S^\L)^2$, both summands in the definition of $r$ are nonnegative and in particular $r \geq 0$.
We state that in this case $\tr (S^\L) = 0$. Suppose by contradiction that $\tr (S^\L) \neq 0$ or more specifically there exists $t_0 \in I_\L$ such that $\tr (S_{t_0}^{\L}) \neq 0$. Without loss of generality, assume that $t_0 = 0$. Then $t \longmapsto \tr (S_{t}^{\L})$ is a solution of the differential equation $x'(t) + x(t)^2 + r(t) = 0$ with a non null initial condition $x_0 = \tr (S_{t_0}^{\L})$ which implies that $\lim_{t \rightarrow {-\frac{1}{x_0}}^-} \tr (S_{t}^{\L}) = - \infty$, which is a contradiction. Finally Corollary \ref{Riccati-Equation-Real} implies that $r = 0$ and by the definition of $r$  we have that $\tr(R) = 0$ and $\tr ({S^\L}^2) = \tr (S^\L)^2$ which occurs if only if $S^\L = \frac{1}{n} \tr (S^\L) \text{\hspace{0.15ex}Id\hspace{0.15ex}}$. Then $S^\L = 0$.
\end{proof}
\vspace{0.25\baselineskip}

Following what was presented in the previous section, given a isotropic subspace $\I \subset \J$, we can associate a Riccati operator $S^{\L_\I}$ to any $\I$-transverse Lagrangian subspace $\L_\I$ (i.e. a lagrangian subspace of $\J_\I$). Furthermore, by Corollary \ref{transverse-Lagrangian-subspaces}, $S^{\L_\I}$ can be associated with a Lagrangian subspace $\L \subset \J$, such that $\L \supset \I$.

The Riccati operator $S^{\L_\I}$ will be called a $\I$-transverse Riccati operator (associated to $\L_\I$) and the Lemma \ref{Lagrangian-spaces-Riccati-operators} and Proposition \ref{Eschenburg-Heintze} holds to this type of Riccati operator either.

\vspace{0.25\baselineskip}

\subsection{Wilking's decomposition lemma} \label{Wilking-decomposition-lemma}

We are finally ready to enunciate and proof the Wilking's decomposition lemma. It is worth to mention that, as noted at the beginning of the Section \ref{Structures-associated-isotropic-subpaces-Jacobi-fields}, the central idea of the proof of this lemma is the choice of the following specific isotropic subspace
\begin{equation*}
\I = \mathrm{\hspace{0.1ex}span\hspace{0.25ex}}_{\mathbb{R}} \{J \in \L \;|\; J (t) = 0 \mathrm{\hspace{1ex} for \; some \hspace{1ex}} t \in \mathbb{R}\}
\end{equation*}
where $\L$ is a fixed Lagrangian subspace of a given Jacobi triple.

\begin{lemma}[Wilking's decomposition] \label{lemma-Wilking-decomposition}
Let $(E, D, R)$ be a Jacobi triple, such that the base of $E$ is $\mathbb{R}$ and $R$ is nonnegative. 
Then
\begin{equation*}
\L = \mathrm{\hspace{0.5ex}span\hspace{0.25ex}}_{\mathbb{R}} \{J \in \L \;|\; J(t) = 0 \mathrm{\hspace{1ex} for \; some \hspace{1ex}} t \in \mathbb{R}\} \oplus \{J \in \L \;|\; J \mathrm{ \hspace{1ex} is \; parallel}\},
\end{equation*}
for all $\L \in \Lambda (\J)$.
\end{lemma}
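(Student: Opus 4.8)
The plan is to introduce the isotropic subspace
\[
\I = \mathrm{span}_{\mathbb{R}}\{J \in \L \;|\; J(t)=0 \text{ for some } t\in\mathbb{R}\}
\]
as suggested, and to pass to the transverse Jacobi triple $(H,D_H,R_H-3\mathbb{A}^2|_H)$ built from $\I$ via Proposition \ref{transverse-Jacobi-fields}. The image $\pi_H(\L/\I)$ is, by Corollary \ref{transverse-Lagrangian-subspaces}, a Lagrangian subspace $\L_\I$ of the transverse space $\J_\I$. The key observation is that $\L_\I$ has \emph{no} singular instants: if some transverse Jacobi field vanished at a time $t_0$, one could lift it (using $\pi_H(\I^\omega)=\J_\I$) to an element of $\I^\omega$ whose horizontal part dies at $t_0$, and then correcting by an element of $\I$ one would produce a Jacobi field of $\L$ that lies in $\I$ by the very definition of $\I$ — forcing the transverse class to be zero. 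Thus $I_{\L_\I}=\mathbb{R}$.

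Next I would invoke the curvature comparison. The curvature operator of the transverse triple is $R_H - 3\mathbb{A}^2|_H$; since $\mathbb{A}^2|_H = -\mathbb{A}_H^{\ast}\mathbb{A}_H$ is nonpositive (Eq.~\eqref{eq-propriedades-tensor-A-Aadjunto}) and $R\ge 0$ implies $R_H\ge 0$, we get $R_H - 3\mathbb{A}^2|_H \ge 0$, hence $\tr(R_H - 3\mathbb{A}^2|_H)\ge 0$. Applying Proposition \ref{Eschenburg-Heintze} (in its $\I$-transverse version, as noted at the end of Section \ref{Riccati-operators}) to the Lagrangian $\L_\I$ on $I_{\L_\I}=\mathbb{R}$ yields that the transverse Riccati operator $S^{\L_\I}$ vanishes identically and moreover $\tr(R_H - 3\mathbb{A}^2|_H)=0$. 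The vanishing of $S^{\L_\I}$ says every transverse Jacobi field in $\L_\I$ is $D_H$-parallel; combined with the trace identity (which forces each nonnegative summand $\tr R_H$ and $-3\tr(\mathbb{A}^2|_H)$ to vanish, so $\mathbb{A}_H=0$ along $\gamma$ and $R$ vanishes on $H$), this should let me promote a transverse-parallel field to a genuinely parallel Jacobi field $J\in\L$ with $DJ\in V$ controlled; I expect the mixed-component relations of Proposition \ref{transverse-Jacobi-fields}(b), particularly Eq.~\eqref{eq-2-auxiliar-jacobi-transverse-Marcelo}, to give $DJ=0$ on the nose.

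Finally I would assemble the direct sum. Every $J\in\L$ decomposes: its class $[J]$ in $\J_\I$ lies in $\L_\I$, hence is represented by a parallel field, so $J$ differs from a parallel Jacobi field by an element of $\I$; this gives $\L = \I + \{J\in\L : J \text{ parallel}\}$. The sum is direct because a nonzero parallel field never vanishes, while by Lemma \ref{properties-isotropic}\ref{properties-isotropic-c} every element of $\I$ does vanish somewhere (indeed $\I(t_0)$ collapses at singular instants). The main obstacle I anticipate is the last bookkeeping step — checking that the representative of a transverse-parallel class can be chosen \emph{parallel in $\L$ itself}, not merely transverse-parallel — which requires carefully tracking how $D$ acts on the vertical correction term and using that $\I$ is generated by fields vanishing at points together with Eq.~\eqref{eq-2-auxiliar-jacobi-transverse-Marcelo}; the anisotropy of the Chern connection enters only through quantities already packaged into the Jacobi triple, so no genuinely new Finslerian difficulty should appear beyond what Example \ref{Finslerian Jacobi fields} and Proposition \ref{proposition-covariantderivative-finsler} already handle.
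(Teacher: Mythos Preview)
Your plan follows the paper's proof essentially step for step: define $\I$, show $I_{\L_\I}=\mathbb{R}$ via the characterization $\I=\{J\in\L : J(t)\in V_t \text{ for some } t\}$, apply the transverse Eschenburg--Heintze comparison (Proposition~\ref{Eschenburg-Heintze}) to obtain $S^{\L_\I}\equiv 0$, deduce $R_H=0$ and $\mathbb{A}=0$ from the transverse Riccati equation, and then verify that for every $J\in\L$ the horizontal part $J^H$ is itself a $D$-parallel Jacobi field lying in $\L$. The ``obstacle'' you flag is handled exactly as you guess: $(DJ^H)^H=D_H J^H=S^{\L_\I}J^H=0$ and $(DJ^H)^V=\mathbb{A}_H J^H=0$.

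One correction, however: your directness argument is wrong as stated. A general element of $\I$ is a \emph{linear combination} of fields each vanishing at possibly different instants, and such a combination need not vanish anywhere; Lemma~\ref{properties-isotropic}\ref{properties-isotropic-c} says only that the set of $\I$-singular instants is discrete, not that every $J\in\I$ has a zero. The correct (and simpler) reason the sum is direct is that every $J\in\I$ is vertical (since $\I(t)\subset V_t$ by construction of $V^{\I}$), whereas every parallel $J\in\L$ is horizontal: for $J$ parallel and any generator $\tilde J$ of $\I$ with $\tilde J(t_1)=0$, the Lagrangian condition gives $\langle J,\tilde J\rangle' = 2\langle DJ,\tilde J\rangle=0$, so $\langle J,\tilde J\rangle$ is constant and vanishes at $t_1$, hence everywhere; extend by linearity to all of $\I$. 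Thus $\I\cap\{J\in\L: DJ=0\}=\{0\}$.
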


\begin{proof}
Define
\begin{equation} \label{definition-I}
\I := \mathrm{\hspace{0.1ex}span\hspace{0.25ex}}_{\mathbb{R}} \{J \in \L \;|\; J (t) = 0 \mathrm{\hspace{1ex} for \; some \hspace{1ex}} t \in \mathbb{R}\}.
\end{equation}
It is immediate that $\I \subset \J$ is isotropic subspace of Jacobi fields. Then denote $k = \dim (\I)$ and consider $V$ and $H$ the vertical and horizontal subbundles of $E$ induced by $\I$. Also denote by $\pi_H$ the horizontal projection whit respect to the decomposition $E = V \oplus H$ and by $\L_\I = \pi_H (\L) = \pi_H (\L / \I)$ the $\I$-tranversal Lagrangian subspace induced by $\I$ which is explicitly described by this
\begin{equation*}
\L_\I = \{J^H \in \J_\I \;|\; J \in \L\}
\end{equation*}
where $\J_\I$ is the space of $\I$-transverse Jacobi fields.

It is immediate that $\I \subset \J$ is isotropic subspace of Jacobi fields so that by Proposition \ref{transverse-Jacobi-fields} we have
\begin{equation*}
\L \approx \I \oplus \L / \I \approx \I \oplus \L_\I.
\end{equation*}
Then it suffices to prove that $\L_\I = \{J \in \L \;|\; J \mathrm{ \hspace{1ex} is \; parallel}\}$.
 Note that $\L_\I \supset \{J \in \L \;|\; J \mathrm{ \hspace{1ex} is \; parallel}\}.$ 
In fact for each $J \in \L$ parallel and $\tilde{J} \in \I$ we have that 
$  \scal{J}{\tilde{J}}'=\langle D J, \tilde{J}\rangle +  \langle J, D \tilde{J}\rangle = 2 \langle D J, \tilde{J}\rangle=0$ 
 and consequently $\scal{J}{\tilde{J}} = 0$ i.e. $J$ is horizontal.

In what follows we 
prove that $\L_\I \subset \{J \in \L \;|\; J \mathrm{ \hspace{1ex} is \; parallel}\}$ or equivalently that $J^H$ is $D$-parallel Jacobi field in $\L$ for all $J \in \L$. First we need to prove the next two claims.

\vspace{\baselineskip}
\textbf{Claim A.} $I_{\L_\I} = \R$ and the $\I$-transverse Riccati operator $S^{\L_\I}$ is identically $0$.
\vspace{0.25\baselineskip}

{\it Proof.\hspace{0.5ex}} First note that
\begin{equation} \label{equation-claim-a}
\I = \left\{ J \in \L \;|\; J(t) \in V_t \mathrm{\hspace{1ex} for \; some \hspace{1ex}} t \in \mathbb{R} \right\}.
\end{equation}
In fact, if $J(t) \in V_t$ for some $t \in \R$, there exists $J_1 \in \I$ and $J_2 \in \I^{0}_{t}$ (i.e. $J_2(t) = 0$) such that $J(t) = J_1(t) + D J_2 (t)$ (see item \ref{properties-isotropic-c} of Lemma \ref{properties-isotropic}). 
By mutipling both sides of the equation by $D J_2 (t)$ and using the fact that $\omega(J, J_2)=\omega(J_1, J_2)=0$ we can infer that
$\| D J_2 (t) \|^2 = 0$. 
We conclude that $J - J_1$ is a Jacobi field in $\L$ such that $(J - J_1) (t) = 0$ and by the definition of $\I$ (see Eq. \eqref{definition-I}) $J - J_1 \in \I$ or equivalently $J \in \I$.  The other inclusion follows from the fact that each $J\in \I$ is vertical. 

Now take a $\L$-regular instant $t_0 \in \R$ and $J_1, \cdots, J_{n-k} \in \L$ such that $\{J^{H}_{1} (t_0), \cdots, J^{H}_{n-k} (t_0)\} \subset H_{t_0}$ is a base. Then $\{J^{H}_{1}, \cdots, J^{H}_{n-k}\}$ is a frame of $H$. In fact for each $t \in \R$ if $\sum \lambda_i J^{H}_{i} (t) = 0$ then $\sum \lambda_i J_i \in \I$ (by Eq. \eqref{equation-claim-a}) which implies that $\sum \lambda_i J^{H}_{i} (t_0) \in H_{t_0} \cap V_{t_0} = \{0\}$ and follows that $\lambda_i = 0$.

Finally, since $H$ has a global frame of the form $\{J^{H}_{1}, \cdots, J^{H}_{n-k}\}$, we conclude that $\mathrm{\hspace{0.1ex}span\hspace{0.25ex}}_{\mathbb{R}} \{J^{H}_{1}, \cdots, J^{H}_{n-k}\} = \L_\I$ and consequently $\L_\I (t) = H_t$ which by definition of $L_\I$-regular instants means that $I_{\L_\I} = \R$  (see eq. \ref{regular-instants} and remember that the total space for $\I$-transverse Jacobi fields is $H$).

Additionally, since $R$ is nonnegative, Proposition \ref{Eschenburg-Heintze} implies that $S^{\L_\I} \equiv 0$.

\vspace{\baselineskip}
\textbf{Claim B.} $R_H, \mathbb{A} = 0$.
\vspace{0.25\baselineskip}

{\it Proof.\hspace{0.5ex}} Since $S^{\L_\I} = 0$ (see {\bf Claim A}), it follows by the transverse version of Riccati equation, i.e. 
$(S^{\L_\I})' + (S^{\L_\I})^2 + (R_H - 3 \mathbb{A}^{2} |_{H}) =0 $ (see Eq \ref{Riccati-equation}), that $R_H = 3 \mathbb{A}^{2} |_{H}.$ 
This equation together with Eq.\eqref{eq-propriedades-tensor-A-Aadjunto} imply that $\forall X$
\begin{align*}
\langle R_H X, X\rangle & =  \langle 3 \mathbb{A}^{2} |_{H}X, X  \rangle \\ 
 & =  -3\langle \mathbb{A}_{H}^{\ast} \mathbb{A}_H X,X\rangle\\
&=  -3 \langle  \mathbb{A}_H X,\mathbb{A}_H X\rangle
\end{align*}
Therefore, since by hypothesis $R_H$ is nonnegative, we infer that $R_H = 0$  and hence $\mathbb{A}= 0$.


\vspace{\baselineskip}

Now we are going to prove that $J^H$ is $D$-parallel for all $J \in \L$. Indeed $J^H$ is $D_H$-parallel since $D_H J^H = S^{\L_\I} J^H = 0$ (see Claim A and item \ref{Lagrangian-spaces-Riccati-operators-a} of Proposition \ref{Lagrangian-spaces-Riccati-operators}) which means that $(D J^H)^H = 0$. The nullity of the vertical component of $D J^H$ follows from the fact that 
$\scal{(D J^H)^V}{\tilde{J}} = \scal{\mathbb{A}J^H}{\tilde{J}} = 0$ for all $\tilde{J} \in \I$.

Now we proceed with the prove that $J^H$ is a Jacobi field. Since $J^H$ is $D$-parallel it suffices to prove that $R J^H = 0$. By $R$ self-adjointness and $J^H$ $D$-parallelism, for each $\tilde{J} \in \I$, follows that

\begin{align*}
\Scal{(R J^H)^V}{\tilde{J}} &= \Scal{R J^H}{\tilde{J}} \\
&= \Scal{J^H}{R \tilde{J}} \\
&= \Scal{J^H}{- D^2 \tilde{J}} \\
&= - \Scal{J^H}{D \tilde{J}}' \\
&= - \Scal{J^H}{\tilde{J}}'' \\
&= 0.
\end{align*}

Then by {\bf Claim B} we conclude that $R J^H = (R J^H)^V + R_H J^H = 0$.

Finally we finish with the proof that $J^H \in \L$. Indeed it is immediately from $J^H$ $D$-parallelism that $\omega (J^H, \tilde{J}) = - \scal{J^H}{\tilde{J}}' = - \scal{J^H}{\tilde{J}^H}' = 0$ for all $\tilde{J} \in \L$ which implies that $J^H \in \L^\omega = \L$.
\end{proof}

As a direct  consequence  of  Wilking's decomposition lemma, 
we can infer  Proposition \ref{lemma-Jacobi-Decomposition}.

\vspace{\baselineskip}

\bibliographystyle{amsplain}

\end{document}